\documentclass[12pt]{amsart}
\usepackage{amsfonts}
\usepackage[backend=biber,style=alphabetic,sorting=nyt]{biblatex}
\addbibresource{TFCYO.bib}
\usepackage{tikz-cd}
\usepackage{amsthm}
\usepackage{amsmath}
\usepackage{amscd}
\usepackage{amssymb}
\usepackage{stmaryrd}
\usepackage{fullpage}
\usepackage[colorlinks=false]{hyperref}
\newcommand*{\triple}[2][.1ex]{%
  \mathrel{\vcenter{\offinterlineskip%
  \hbox{$#2$}\vskip#1\hbox{$#2$}\vskip#1\hbox{$#2$}}}}
\newcommand*{\triplerightarrows}{\triple{\rightarrow}}

\newcommand{\ext}{\text{Ext}}
\newcommand{\coh}{\text{Coh}}
\newcommand{\spec}{\text{Spec}\,}
\newcommand{\Hom}{\text{Hom}}
\newcommand{\Pic}{\mathrm{Pic}}
\newcommand{\Picst}{\mathrm{Pic}^{\mathrm{st}}}
\newcommand{\AandA}{{\mathbb{A}^1\smash{\underset{*}{\sqcup}}\mathbb{A}^1}}
\newcommand{\MF}{\mathrm{MF}}

\theoremstyle{definition}
\newtheorem{thm}{Theorem}[section]
\newtheorem{lemma}[thm]{Lemma}
\newtheorem{prop}[thm]{Proposition}
\newtheorem{cor}[thm]{Corollary}

\newtheorem{rem}[thm]{Remark}
\newtheorem{defi}[thm]{Definition}
\newtheorem{example}[thm]{Example}

\title[Mirror Symmetry for Toric Calabi-Yau Three-Orbifolds]{Topological Fukaya Category and Mirror Symmetry for Toric Calabi-Yau Three-Orbifolds}

\begin{document}

\author{Qingyuan Bai}
\address{Qingyuan Bai, School of Mathematical Sciences, Peking University, 5 Yiheyuan Road, Beijing 100871, China}
\email{baiqingyuan1@gmail.com}

\author{Bohan Fang}
\address{Bohan Fang, Beijing International Center for Mathematical
  Research, Peking University, 5 Yiheyuan Road, Beijing 100871, China}
\email{bohanfang@gmail.com}

\begin{abstract}
	We prove a version of homological mirror symmetry statement for toric Calabi-Yau $3$-orbifolds, thus extending \cite{Pascaleff_2019} to the case of orbifolds under the mirror symmetry setting considered in \cite{fang2019remodeling}. The B-model is the matrix factorization category for the toric Calabi-Yau $3$-orbifold with a superpotential; while the A-model is a topologically defined Fukaya-type category on its mirror curve.
\end{abstract}

\maketitle

\section{Introduction}

The Fukaya category of a symplectic manifold is a categorification of its Lagrangian Floer theory (see \cite{FOOO, SeidelBook} for comphrensive introduction). It is an important object in symplectic geometry, and plays a central role in homological mirror symmetry (HMS) \cite{KontsevichICM}. Computing such Fukaya categories is a difficult problem in general. However, according to a proposal of Kontsevich \cite{KontsevichProposal}, the partially wrapped Fukaya category of an exact symplectic manifold should behave like a cosheaf supported on its ``skeleton'' -- its symplectic core, usually a singular Lagrangian submanifold. There have been numerous attempts to realize Kontsevich's proposal (see for example \cite{sibilla2011ribbon,dyckerhoff2013triangulated,Haiden_2017,nadler2016wrapped}, and more recently, \cite{Ganatra_2019,ganatra2019sectorial,ganatra2020microlocal}). A pleasant consequence of these construction is that, to compute the global Fukaya cateogry, one can first compute the relatively easy local pieces, then glue them together. This is especially useful in the setting of mirror symmetry. Given a mirror pair $(M,M^\vee)$, the HMS statement says $$\text{Fuk}(M)\cong \text{Coh}(M^\vee)$$ where the left side is the Fukaya category and the right side is the category of coherent sheaves, both understood as higher categories ($A_\infty$ or dg). There are various descent theorems in algebraic geometry allowing us to compute $\text{Coh}(M^\vee)$ in a local-to-global manner. The cosheaf property of Fukaya category is a descent phenomenon on the mirror side. Hence proving the homological mirror symmetry statement reduces to matching the local pieces and gluing data. This kind of consideration has already appeared in \cite{Kuwagaki_2020,lee2016homological,Pascaleff_2019}. 

%However, realizing the cosheaf property of the Fukaya category and identifying it with certain cosheaf topological model on its core is still a difficult problem, and while the current results are already general and powerful \cite{ganatra2020microlocal, some-Nadler-paper}, they are not directly applicable to the case of HMS in hand.

Prompted by Kontsevich's proposal \cite{KontsevichProposal}, one may bypass the actual Floer-theoretic Fukaya categories, and take the topological nature of such categories for granted. One simply \emph{defines} a substitute of the Fukaya category by the topological data on the skeleton. A drawback of this approach is that the dependence of such categories on the choice of the symplectic core, which is not unique. The independence of the choice of skeleta could be worked out directly \cite{Dyckerhoff_2017, nadler2021sheaf} in many situations, or by identifying such definitions to the actual Fukaya categories which are invariants of symplectic manifolds \cite{ganatra2020microlocal}.

In this paper, we consider HMS for toric Calabi-Yau $3$-orbifolds using the topological definition for Fukaya categories. The enumerative mirror symmetry in this setting (matching Gromov-Witten invariants and variations of Hodge structures) dates back to \cite{AganagicVafa,AKV}. There are many works along this line in the mathematical and physics literature (e.g. \cite{BKMP,EOProof,fang2019remodeling}). The mirror to a toric Calabi-Yau $3$-orbifold is an affine curve called the \emph{mirror curve}. We consider HMS instead of enumerative mirror symmetry in this paper, and match the A-model on the mirror curve and the B-model on the toric Calabi-Yau $3$-orbifold. The homological mirror symmetry in this situation has been studied in \cite{abouzaid2014homological} for punctured spheres as mirror curves and for general smooth toric Calabi-Yau $3$-folds in \cite{lee2016homological} where the A-model is the wrapped Fukaya category. The works of \cite{nadler2016wrapped,Pascaleff_2019} use the topological approach for Fukaya categories in this setting, and in particular Pascaleff-Sibilla \cite{Pascaleff_2019} proves the general HMS statement for smooth toric Calabi-Yau $3$-folds. We extend the work of \cite{Pascaleff_2019} to the case of toric Calabi-Yau $3$-orbifolds under the mirror symmetry set-up in \cite{fang2019remodeling}.

\subsection{Main result}

Let $\mathcal X_\Sigma$ be a toric Calabi-Yau $3$-orbifold defined from the fan data $\Sigma$. Its mirror geometry $\mathcal{C}_\Sigma$ is a non-compact Riemann surface, called the mirror curve. The B-model of the toric orbifold is captured by the category of matrix factorization of the Landau-Ginzburg model $(\mathcal{X}_\Sigma,W_\Sigma)$. Here the superpotential $W_\Sigma:\mathcal{X}_\Sigma\rightarrow \mathbb{A}^1$ is given by the height function on the fan $\Sigma$. On the mirror side, we use the topological Fukaya category of \cite{dyckerhoff2013triangulated} to describe the symplectic geometry of the mirror curve $\mathcal{C}_\Sigma$. The main result of this note is the following.

\begin{thm} Denote $\text{MF}^\infty(\mathcal{X}_\Sigma,W_\Sigma)$ the Ind completion of the category of matrix factorization and $\mathcal{F}^{top}_\infty(\mathcal{C}_\Sigma)$ the Ind completion of the topological Fukaya category, then we have an equivalence of $\mathbb{Z}/2$-dg categories
$$\text{MF}^\infty(\mathcal{X}_\Sigma,W_\Sigma)\cong\mathcal{F}^{top}_\infty(\mathcal{C}_\Sigma).$$
\end{thm}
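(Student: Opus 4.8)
\emph{Strategy.} The plan is to follow the architecture of \cite{Pascaleff_2019} and realize both sides as limits of one sheaf of $\mathbb{Z}/2$-dg categories indexed by the combinatorics of $\Sigma$, then identify the two sheaves. Slicing the fan at height one exhibits $\Sigma$ as a lattice triangulation $T$ of a two-dimensional polygon $\Delta$; let $\Gamma_\Sigma$ be the dual trivalent graph (the web diagram), whose vertices, edges and flags correspond to the maximal, codimension-one and codimension-two cones. On the B-side I would first establish that $\mathrm{MF}^\infty(-,W)$ satisfies \v{C}ech descent along the toric affine cover $\{\mathcal{U}_\sigma\}$ of $\mathcal{X}_\Sigma$ — the stacky extension of descent for categories of singularities — so that $\mathrm{MF}^\infty(\mathcal{X}_\Sigma,W_\Sigma)$ is the limit over the poset of cones of the $\mathrm{MF}^\infty(\mathcal{U}_\sigma,W_\Sigma|_{\mathcal{U}_\sigma})$. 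On the open set of a ray $\rho$ (and on the big torus) the height function restricts to a single coordinate monomial with empty critical locus, so $\mathrm{MF}^\infty(\mathcal{U}_\rho,W_\Sigma|_{\mathcal{U}_\rho})\simeq 0$ and the limit collapses onto $\Gamma_\Sigma$: a diagram with vertex categories $\mathrm{MF}^\infty([\mathbb{C}^3/G_\sigma],xyz)$ and edge categories $\mathrm{MF}^\infty([\mathbb{C}^2/G_\tau]\times\mathbb{C}^*,xy)$. On the A-side, the topological Fukaya category of \cite{dyckerhoff2013triangulated} is by construction the limit of the (Ind-completed Dyckerhoff--Kapranov) cosheaf on a ribbon-graph model of $\mathcal{C}_\Sigma$; the relevant point is that $\mathcal{C}_\Sigma$ deformation retracts onto a ribbon graph refining $\Gamma_\Sigma$, in which each vertex $\sigma$ spans an open sub-surface $P_\sigma\subset\mathcal{C}_\Sigma$ and each edge $\tau$ a connecting sub-surface $N_\tau$, so that $\mathcal{F}^{top}_\infty(\mathcal{C}_\Sigma)$ is likewise a limit over $\Gamma_\Sigma$ of the $\mathcal{F}^{top}_\infty(P_\sigma)$ and $\mathcal{F}^{top}_\infty(N_\tau)$.

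\emph{Local and gluing comparisons.} With both presentations in place the theorem reduces to: (i) a local homological mirror symmetry equivalence $\mathrm{MF}^\infty([\mathbb{C}^3/G_\sigma],xyz)\cong\mathcal{F}^{top}_\infty(P_\sigma)$ for each orbifold vertex, and the analogous statement for the wall pieces; and (ii) compatibility of these equivalences with the B-side restriction-to-a-wall functors and the A-side corestriction functors of the ribbon-graph cosheaf, so that they glue to an equivalence of the two $\Gamma_\Sigma$-diagrams; passing to limits then finishes. For trivial $G_\sigma$ this is exactly the local input used in \cite{Pascaleff_2019} — the pair of pants for $\mathrm{MF}(\mathbb{C}^3,xyz)$, and the cylinder for a smooth wall. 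For general abelian $G\subset SL_3(\mathbb{C})$, which fixes $xyz$ so that $\mathrm{MF}([\mathbb{C}^3/G],xyz)=\mathrm{MF}_G(\mathbb{C}^3,xyz)$, I would deduce the local statement by equivariantizing the pair-of-pants computation: the $G$-action on $\mathbb{C}^3$ corresponds under mirror symmetry to a finite symmetry of the pair of pants, and $P_\sigma$ is the associated $G$-cover, totally ramified over the three punctures with ramification read off from the weights of $G$ on $(x,y,z)$ — equivalently, $\mathrm{MF}_G(\mathbb{C}^3,xyz)$ is the perfect modules over the McKay quiver of $[\mathbb{C}^3/G]$ with its canonical potential, which is the Jacobi algebra of the dimer dual to the lattice triangulation of the cross-section $\delta_\sigma$ of $\sigma$, and this dimer is precisely the datum of the ribbon graph of $P_\sigma$. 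One then checks that these identifications are natural in $\sigma$, i.e.\ compatible with the monomial-localization maps to the walls. A less self-contained alternative replaces $[\mathbb{C}^3/G_\sigma]$ by a smooth toric crepant resolution and invokes the smooth case; globally this amounts to the claim that $\mathrm{MF}^\infty(\mathcal{X}_\Sigma,W_\Sigma)$ is invariant under crepant refinement of $\Sigma$, which would reduce the whole theorem to \cite{Pascaleff_2019} applied to any smooth crepant resolution $\mathcal{X}_{\tilde\Sigma}$, since $\mathcal{C}_{\tilde\Sigma}$ is the same surface as $\mathcal{C}_\Sigma$.

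\emph{Main obstacle.} I expect the principal difficulty to be the orbifold local equivalence in (i), made \emph{functorial} enough to survive (ii): one must pin down how the $G$-data — equivalently, the lattice structure of $\delta_\sigma$, equivalently the ramification over the three ends of $P_\sigma$ — is distributed around the local model, match it with the gluing of the sub-surfaces $P_\sigma$ inside $\mathcal{C}_\Sigma$ (and, for orbifold walls, identify $N_\tau$ with the corresponding sub-ribbon-graph), and verify that the Kn\"orrer-type dimensional reductions and the orbifold/Galois descent for the topological Fukaya category are performed compatibly with the $\mathbb{Z}/2$-dg, Ind-completed enhancements throughout. On the crepant-resolution route the obstacle instead becomes the crepant-refinement invariance of $\mathrm{MF}^\infty(\mathcal{X}_\Sigma,W_\Sigma)$ at the $\mathbb{Z}/2$-dg level and in a way matching the A-side: this is morally the statement that the Landau--Ginzburg B-model depends only on $\Delta$, but the height superpotential pulls back non-reducedly under a resolution, so the derived-McKay equivalence on $D^b\mathrm{Coh}$ must be compared carefully with its effect on the singularity categories. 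Finally, the Calabi--Yau hypothesis is exactly what forces $W_\Sigma$, and hence every category above, to be only $\mathbb{Z}/2$-periodic, so every equivalence in sight must be produced as a $\mathbb{Z}/2$-dg equivalence — a constraint on the available generators and gradings in the local models that has to be tracked from the start.
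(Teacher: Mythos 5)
Your plan follows essentially the same architecture as the paper (which in turn follows \cite{Pascaleff_2019}): descent for $\MF^\infty$ from \cite{preygel2011thomsebastiani}, collapse of the \v Cech diagram onto the dual graph of the triangulation because $W^{-1}(0)$ is smooth on deeper intersections, a local equivariant equivalence for $([\mathbb A^3/G],z_1z_2z_3)$ obtained by viewing the local mirror curve as a finite abelian cover of the pair of pants, and an inductive gluing matching the two graph-shaped diagrams. There are, however, two places where your sketch is inaccurate or materially underestimates what must be proved. First, on the A-side the limit presentation over the dual graph is not ``by construction'': $\mathcal F^{top}$ is defined as a \emph{colimit} (a cosheaf) over the incidence category of a ribbon graph, and the statement that $\mathcal F^{top}_\infty$ of a skeleton glued from closed pieces along circles is the homotopy \emph{pullback} of its values on those pieces is the exceptional-gluing theorem of \cite{Pascaleff_2019} (Proposition \ref{gluealongcircle} here), which holds only after Ind-completion and uses the exceptional restriction functors $S_\infty$. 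Moreover, the induction needs Lemma \ref{close_compatible_with_open} to show that the restriction-to-puncture functors $R_p$ factor through the exceptional restriction to the old skeleton; this is exactly how the paper matches the A-side restrictions with the B-side Zariski restrictions $B(R_{\Sigma'})\to B(\nu)$ edge by edge. You correctly flag this compatibility as the main obstacle, but your proposal does not supply the mechanism.

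Second, the local step. The paper does not pass through McKay quivers or dimer Jacobi algebras: it proves $\MF([\mathbb A^3/G],z_1z_2z_3)\cong \coh_{\mathbb Z/2}([\{z_1z_2=0\}/G])$ by an explicit equivariant dimensional reduction (generators and $G$-invariant Ext algebras, Proposition \ref{affinecomputation}), decomposes the nodal quotient curve as a pushout of two copies of $[\mathbb A^1/G]$ over $[*/G]$, and matches this square with the pushout coming from the skeleton of $\mathcal C_\sigma$ obtained as the preimage of a dumbbell under the quotient map to $\mathbb P^1\setminus\{0,1,\infty\}$. Note that the deck group is the \emph{dual} group $G^\vee\cong\Picst(\mathcal X_\sigma)$ acting freely, so the cover of the punctured sphere is unramified, and over the $i$-th puncture there are $m_i$ punctures upstairs permuted transitively by $G^\vee$; your ``totally ramified over the three punctures'' is wrong unless $m_i=1$, and getting this $G^\vee$-labeling of boundary circles right is precisely what later makes the gluing along $\widetilde{\mathcal C_\tau}$ compatible on the two sides (Section \ref{sec:general-mirror-curve}). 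Your dimer-algebra identification of $\MF_G(\mathbb A^3,z_1z_2z_3)$ would need an independent proof, and the crepant-resolution shortcut is circular as stated: invariance of $\MF^\infty$ under crepant refinement is obtained in the paper as a corollary of the main theorem, not as an input.
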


As a corollary, we get a crepant transformation type theorem. In general, an affine toric Calabi-Yau variety $X$ of dimension $3$ is given by the combinatorial data of a planar polygon $P$ with not-necessarily orbifold singularity. Such toric variety is associated to the cone over $P$ placed at height $1$ plane. There are many ways to resolve singularity: a triangulation of $P$ gives rise to a fan $\Sigma$ and it defines a smooth toric Calabi-Yau $3$-orbifold \cite{Borisov_2004}. When such a triangulation is unimodular, we obtain a smooth toric Calabi-Yau $3$-fold. Let $\Sigma$ and $\Sigma'$ be two such fans from two triangulations, and then $({\mathcal X}_{\Sigma'},\mathcal{X}_{\Sigma''})$ can be viewed as a crepant pair -- their canonical classes are the same as the pullback of the canonical class of the affine variety $X$.
\begin{cor} For the pair $(\mathcal{X}_{\Sigma'},{\mathcal X}_{\Sigma''})$,
$$\text{MF}^\infty(\mathcal{X}_{\Sigma'},W_{\Sigma'})\cong\text{MF}^\infty({\mathcal X}_{\Sigma''},W_{\Sigma''})$$
\end{cor}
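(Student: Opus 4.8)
The plan is to deduce the corollary from the main Theorem together with the observation that the topological Fukaya category of the mirror curve depends only on the affine variety $X$, and not on the chosen triangulation. Applying the Theorem to $\Sigma'$ and to $\Sigma''$ separately gives $\text{MF}^\infty(\mathcal{X}_{\Sigma'},W_{\Sigma'})\cong\mathcal{F}^{top}_\infty(\mathcal{C}_{\Sigma'})$ and $\text{MF}^\infty(\mathcal{X}_{\Sigma''},W_{\Sigma''})\cong\mathcal{F}^{top}_\infty(\mathcal{C}_{\Sigma''})$, so the statement reduces to producing an equivalence $\mathcal{F}^{top}_\infty(\mathcal{C}_{\Sigma'})\cong\mathcal{F}^{top}_\infty(\mathcal{C}_{\Sigma''})$.

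First I would argue that $\mathcal{C}_{\Sigma'}$ and $\mathcal{C}_{\Sigma''}$ are the same open Riemann surface. In the set-up of \cite{fang2019remodeling} the mirror curve is cut out in $(\mathbb{C}^*)^2$ by a Laurent polynomial whose support lies in $P\cap\mathbb{Z}^2$; a triangulation subdivides $P$ but does not change $P$ itself, and the topological type of the resulting curve -- genus equal to the number of interior lattice points of $P$, with one puncture for each lattice point on $\partial P$ -- is therefore an invariant of $X$ (the Kähler parameters on which the complex structure depends are invisible to the topological Fukaya category). What the triangulation genuinely provides is the auxiliary combinatorial datum used to compute $\mathcal{F}^{top}$: the dual graph of the triangulation is a spanning ribbon graph (spine) $\Gamma_\Sigma$ of the fixed surface $\mathcal{C}_P$, and $\mathcal{F}^{top}(\mathcal{C}_\Sigma)$ is obtained as the global sections over $\Gamma_\Sigma$ of a cosheaf of $\mathbb{Z}/2$-dg categories in the sense of \cite{dyckerhoff2013triangulated}.

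The substantive input is then the invariance of the topological Fukaya category under change of spine. Any two triangulations of $P$ are connected by a finite sequence of elementary moves -- flips, together with stellar subdivisions and contractions when the two triangulations do not share the same vertex set -- and each such move alters $\Gamma_\Sigma$ only by a local modification under which the cosheaf-theoretic global sections are unchanged. Hence $\mathcal{F}^{top}(\mathcal{C}_P)$, and therefore its Ind-completion $\mathcal{F}^{top}_\infty(\mathcal{C}_P)$, is well defined independently of the triangulation; in particular $\mathcal{F}^{top}_\infty(\mathcal{C}_{\Sigma'})\cong\mathcal{F}^{top}_\infty(\mathcal{C}_{\Sigma''})$. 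Chaining this with the two equivalences coming from the main Theorem yields the corollary, and since every step is $\mathbb{Z}/2$-linear the result is an equivalence of $\mathbb{Z}/2$-dg categories; when $\Sigma'$ and $\Sigma''$ are both unimodular this specializes to the crepant transformation statement for smooth toric Calabi--Yau $3$-folds, which likewise follows from \cite{Pascaleff_2019}.

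The main obstacle is establishing spine-independence in the generality the orbifold setting forces on us. For a unimodular triangulation the dual graph is trivalent and one is in the classical situation of \cite{dyckerhoff2013triangulated}, but a non-unimodular triangle corresponds to a piece of the (still smooth) mirror curve that is locally mirror to an orbifold chart $[\mathbb{C}^3/G]$ rather than to an honest pair of pants, so one must check both that the cosheaf built from $\Gamma_\Sigma$ still computes $\mathcal{F}^{top}(\mathcal{C}_P)$ and that it is insensitive to refining such an orbifold piece into honest pants. Concretely this amounts to verifying that the local equivalences established in the course of proving the main Theorem are compatible with flips and with stellar moves, and that the relevant gluing colimits are well behaved enough for these local equivalences to assemble into a global one; the remaining bookkeeping is routine.
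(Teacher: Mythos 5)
Your argument is essentially the paper's: apply the main theorem to each of $\Sigma'$ and $\Sigma''$ and then observe that the two mirror curves are the same topological surface (genus equal to the number of interior lattice points of $P$, one puncture per boundary lattice point), so their topological Fukaya categories agree. The spine-independence that you single out as the main obstacle does not need to be re-established via flips and stellar moves: since $\mathcal{F}^{top}(\mathcal{C})$ is by definition the state sum on an arbitrary compact skeleton and any two skeleta with the same structure at infinity give canonically equivalent categories (Proposition \ref{invariance}, quoted from \cite{dyckerhoff2013triangulated}), the category is already a topological invariant of the punctured surface, and the corollary follows with no further local-move bookkeeping.
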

\begin{proof}
Both sides are identified with the topological Fukaya category of  mirror curves. Now it suffices to note that they have the same mirror curve topologically.
\end{proof}

\subsection{Proof strategy} The proof of the result is similar to that of \cite{Pascaleff_2019}, with careful consideration of the mirror curve for an affine toric Calabi-Yau $3$-orbifold. As hinted above, we reduce the mirror equivalence to identifying local pieces and gluing data. First we consider the case when toric orbifold is affine. In this situation, we give a proof similar to that in \cite{nadler2016wrapped} which deals with the case $(\mathbb{A}^3,W=z_0z_1z_2)$ . On the toric geometry side, a direct computation of the equivariant matrix factorization category identifies it with the equivariant coherent sheaf category of a stacky curve. On the mirror curve side, we exhibit the mirror curve as an unramified cover of 3-punctured sphere (or the so-called `pair of pants'). One observes that the skeleton of 3-punctured sphere can be lifted to be a skeleton of the mirror curve, and the decomposition can be lifted as well (see Figure \ref{skeleton}). This gives a way to compute topological Fukaya category. Eventually both sides have equivariant structures which we take advantage of for comparison. We remark that such equivariant computation already appeared in \cite{abouzaid2014homological}. 

Then we make use of the main technical result of \cite{Pascaleff_2019} which gives an `exceptional gluing' of topological Fukaya categories, which corresponds to Zariski descent on the toric geometry side. For a general toric Calabi-Yau three orbifold, it is glued from local pieces of the form $[\mathbb{A}^3/G]$. We follow \cite{preygel2011thomsebastiani} which shows that the matrix factorization category has descent property to reduce the global computation to affine open covers. On the mirror curve side, we show that the mirror curve can be decomposed into local pieces, each corresponding to a piece of $[\mathbb{A}^3/G]$ on the mirror side, such that the `exceptional gluing' gives the same gluing data. We conclude the proof by induction on the number of the triangles and eventually exhibiting both sides as  homotopy limits of isomorphic diagrams.

\subsection{Overview}
The structure of this paper is as follows. In Section 2, we introduce the relevant notions and explain the definition of the mirror pair. In Section 3, we give the proof of the homological mirror equivalence for affine toric Calabi-Yau $3$-orbifolds. In Section 4, we recall the result of \cite{Pascaleff_2019} and then prove mirror equivalence for the general case.

\subsection{Acknowledgements}

The work of QB and BF is partially supported by NSFC 11831017, NSFC 11890661 and NSFC 12125101. QB would like to thank Emanuel Scheidegger for his interest and clarification on matrix factorization categories. QB and BF would like to thank Mingyuan Hu and Yuxuan Hu for many useful discussions over the years. BF would like to thank Chiu-Chu Melissa Liu, Song Yu and Zhengyu Zong for valuable discussions. 

\section{Preliminaries}

We always write $k$ for the field of complex number $\mathbb C$, especially when we want to emphasize it plays the role of the ground field. We use cohomological grading convention.

\subsection{Categorical language} 
%In this paper we use the formalism of infinity categories of \cite{lurie2009higher}. For example, when we compute limit or colimit of dg categories, we always compute them in the $\infty$-cateogry of dg categories. The main motivation in doing so is that in many geometric situations, the dg-enhenced derived category enjoys sheaf or cosheaf properties, which makes computations convenient. We will follow the treatment of \cite{Pascaleff_2019} and refer to it for a detailed discussion. Here we only record the relevant definition.
%\rightline{\textcolor{blue}{Things to explain: ind completion,dg categories.descent}}\\
We fix some notations and conventions on $\infty$-categories in this section. We mostly follow the setup of \cite{Pascaleff_2019}. The reader is advised to go there for motivation and detailed discussion. We work with $\mathbb{Z}/2$-graded version of dg categories throughout. For derived Morita theory of  $\mathbb{Z}/2$-dg categories, see \cite{dyckerhoff2013triangulated} and \cite{Dyckerhoff_2017}. For the connection between $2$-periodicity and $S^1$-action on categories see \cite{preygel2011thomsebastiani}.
%We will use the language of infinity categories throughout. Thanks to \cite{cohn2016differential} we can freely switch between the homotopy theory of dg categories and the homotopy theory of $k$-linear stable $\infty$-categories. We are going to systematically use $\mathbb{Z}/2$-dg categories instead of dg categories. Though most of the available references are written for dg categories, the result should also hold for $\mathbb{Z}/2$-dg categories without much change in the proof. See \cite{dyckerhoff2013triangulated} for a specified discussion in $\mathbb{Z}/2$-graded setting. One might as well think about the homotopy theory of $k'$-linear stable $\infty$-categories where $k'=k[\beta^{\pm}],|\beta|=2$, which is the point of view taken up in \cite{preygel2011thomsebastiani}.

%There are two approches to the theory of dg categories, via Morita model category structure or via $k$-linear $\infty$-categories. We record here the $\infty$-categorical definition and then explain how to relate to the model category structure.
\begin{defi} Let $\text{Mod}_k^{(2)}$ be the cocomplete stable $\infty$-category of $2$-periodic complexes of $k$-vector spaces and $\text{Mod}_k^{(2),\omega}$ its full subcategory of compact objects.
%Let $\text{Vect}^{(2)}$ be the $\infty$-category of $\mathbb{Z}/2$-graded complexes of $k$-vector spaces and $\text{Vect}^{(2),\omega}$ its full subcategory of compact objects. By $k$-linear $\infty$-categories, we mean $\infty$-cateogries tensored over $\text{Vect}^{(2),\omega}$, not necessarily cocomplete. By $k$-linear cocomplete $\infty$-categories, we mean cocomplete $\infty$-cateogries tensored over $\text{Vect}^{(2)}$.
\begin{itemize}
    \item{} A $\mathbb{Z}/2$-dg category for us is a stable $\infty$-category tensored over $\text{Mod}_k^{(2),\omega}$. A cocomplete $\mathbb{Z}/2$-dg category for us is a cocomplete stable $\infty$-category tensored over $\text{Mod}_k^{(2)}$.
	\item{}  We denote $\text{DGCat}^{(2)}_{\text{small}}$ the $\infty$-category of small $\mathbb{Z}/2$-dg categories and exact functors between them. We denote $\text{DGCat}^{(2)}_{\text{cont}}$ the $\infty$-category of cocomplete $\mathbb{Z}/2$-dg categories and continuous functors between them. Note that these functors are required to be linear over $\text{Mod}_k^{(2),\omega}$ (respectively $\text{Mod}_k^{(2)}$).
	\item{} Given an object $C\in\text{DGCat}^{(2)}_{\text{cont}}$, we denote $C^\omega\in\text{DGCat}^{(2)}_{\text{small}}$ its full subcategory of compact objects. Beware that a functor in $\text{DGCat}^{(2)}_{\text{cont}}$ doesn't always send compact objects to compact objects.
	\item{} We have the Ind-completion functor:
	$$\text{Ind}:\text{DGCat}_{\text{small}}^{(2)}\rightarrow\text{DGCat}^{(2)}_{\text{cont}}$$
	 See \cite{gaitsgory2017study} for concrete definition. 
\end{itemize}
\end{defi}
%In practice, one mostly encounters ordinary categories enriched over chain complexes. We will call them dg categories or differential graded categories. Let $\text{dgCat}^{(2)}$ be the model category of small categories enriched over $\mathbb{Z}/2$-graded chain complexes, equipped with the Morita model structure of \cite{toen2007homotopy}. Let $\mathcal{W}$ be the weak equivalences of the Morita model structure.
%\begin{prop} \cite{cohn2016differential} There is an equivalence of $\infty$-categories, where the localization is taken in the $\infty$-cateogrical sense.
%$$\text{dgCat}^{(2)}[\mathcal{W}^{-1}]\cong\text{DGCat}^{(2)}_{\text{small}}$$
%\end{prop}
\begin{rem} The above homotopy theoretic definition seems very abstract, but are good for various categorical constructions. On the other hand, we also talk about categories strictly enriched over $\mathbb{Z}/2$-graded chain complexes (or $2$-periodic chain complexes) of $k$-vector spaces and enriched functors. One can easily translate between $\mathbb{Z}/2$-graded chain complexes and $2$-periodic chain complexes so we might identify them. There is also a folding construction taking any chain complex to a $2$-periodic chain complex. We can apply this folding construction to any category enriched over chain complexes $\mathcal{C}$ and get its $\mathbb{Z}/2$-folding $\mathcal{C}_{\mathbb{Z}/2}$. For details see Section 5 of \cite{dyckerhoff2013triangulated} or section 2.1 of \cite{nadler2016wrapped}. We care about the homotopy theory of these categories, and one possible approach is the derived Morita theory of \cite{toen2007homotopy}. In fact the $\infty$-category of dg categories obtained from inverting Morita equivalences can be identified with the full subcategory of idempotent complete categories inside $\text{DGCat}^{(2)}_{\text{small}}$. See section 2.1.1. of \cite{Pascaleff_2019} for details on relating rigid model to homotopical setting above. Although we use the formalism of $\infty$-categories, most of our diagrams of $\mathbb{Z}/2$-dg categories will commute on the nose, and we can use the derived Morita model structure for computation.
%In \cite{Pascaleff_2019}, a similar result for cocomplete categories $\text{DGCat}^{(2)}_{\text{cont}}$ was also stated. These results provide convenience for us to work with two models of  dg categories. On the one hand, we will mostly use rigid models of dg categories. For example, we will only meet commutative diagrams in dg categories that commute strictly. This saves us from the trouble of filling higher homotopies. On the other hand, while we use $\infty$-categorical formalism for functorial construcitons, model structure makes it possible to actually compute homotopy limits or colimits.
\end{rem}

%We will also talk about $2$-periodic dg categories. As explained in \cite{nadler2016wrapped} they are in fact the same as $\mathbb{Z}/2$-dg cateogries as follows:
%\begin{itemize}
%	\item{
%By a $2$-periodic dg category, we mean a $\mathbb{Z}$-graded stable dg category such that  shifting twice $[2]$ is equivalent to identity. Then the mapping complexes in this category are $2$-periodic $\mathbb{Z}$-graded complexes.}
%\item{There is a dictionary between $2$-periodic dg cateogries and $\mathbb{Z}/2$-dg categories. From a $\mathbb{Z}/2$-graded cochain complex $A^*$, we can produce a $2$-periodic $\mathbb{Z}$-graded cochain complex $B^*$ by setting $B^n=A^{[n]}$ for $[n]\in\mathbb{Z}/2$ and vice versa. Hence we will not distinguish between them.}
%\item{There is also a way of turning any dg category $C$ in to a $\mathbb{Z}/2$-dg category. We keep all the objects and define a new mapping complex by
%$$\Hom^i(X,Y):=\oplus_{n\in\mathbb{Z}}\Hom^{2n+i}_C(X,Y)$$
%for $i=0,1$. This new category $C_{\mathbb{Z}/2}$ is called the $\mathbb{Z}/2$-folding of $C$. Be careful that if you view a $2$-periodic dg category as a dg category, this folding won't give you the same result as above dictionary.}
%\end{itemize}
We will talk about (co)sheaf of dg categories on a site $\mathcal{C}$. A sheaf of dg categories is just a presheaf on $\mathcal{C}$ valued in suitable category of dg categories (which will be evident in the context, for example, $\text{DGCat}^{(2)}_{\text{cont}}$) satisfying further descent properties. We only require a sheaf to satisfy \v Cech descent (but enough for our purposes, since we only care about sheaves on graphs or nice algebraic stacks equipped with Zariski topology).
\begin{defi} Let $\mathcal{F}$ be a presheaf of dg categories on a site $\mathcal{C}$. We say that $\mathcal{F}$ satisfies \v Cech descent if for any cover $\mathcal{U}\rightarrow V$ (here $\mathcal{U}\rightarrow V$ is actually a family of arrows)
$$\mathcal{F}(V)\rightarrow[\mathcal{F}(\mathcal{U})\rightrightarrows \mathcal{F}(\mathcal{U}\underset{V}\times\mathcal{U})\triplerightarrows\mathcal{F}(\mathcal{U}\underset{V}\times\mathcal{U}\underset{V}\times\mathcal{U})\cdots] $$
realizes $\mathcal{F}(V)$ as a homotopy limit of the right hand side diagram. If this is the case, we say $\mathcal{F}$ is a sheaf of dg categories. The dual notion of cosheaf is defined by reversing all the arrows above and requiring $\mathcal{F}(V)$ to be a homotopy colimit.
\end{defi}
\begin{rem}\label{cech}
These diagrams degenerate to (co)equalizers in the following situations:
\\(1) when triple and higher intersections of the cover $\mathcal{U}$ are empty,\\
(2) when $\mathcal{F}$ evaluated on the triple and higher intersections is equivalent to zero category.\\
In either of the situation, the fact that $\mathcal{F}$ is a sheaf just says 
$$\mathcal{F}(V)\rightarrow\mathcal{F}(\mathcal{U})\rightrightarrows \mathcal{F}(\mathcal{U}\underset{V}\times\mathcal{U}) $$
is a (homotopy) equalizer. For the situation that we are interested in, either (1) or (2) will hold.
\end{rem}

Given a scheme or Deligne-Mumford stack $X$, we denote $\text{Perf}_{\mathbb{Z}/2}(X)\in\text{DGCat}^{(2)}_{\text{small}}$ the $\mathbb{Z}/2$-folding of the category of perfect complexes on $X$, we denote $\text{Coh}_{\mathbb{Z}/2}(X)\in\text{DGCat}^{(2)}_{\text{small}}$ the $\mathbb{Z}/2$-folding of the category of bounded coherent complexes on $X$. Likewise we denote $\text{QCoh}_{\mathbb{Z}/2}(X)\in\text{DGCat}^{(2)}_{\text{cont}}$ the $\mathbb{Z}/2$-folding of the category of unbounded quasicoherent complexes on $X$. All schemes or stacks in this paper are perfect stacks in the sense of \cite{benzvi2010integral}. Hence we have $$\text{QCoh}_{\mathbb{Z}/2}(X)\cong\text{Ind}(\text{Perf}_{\mathbb{Z}/2}(X))$$

\begin{rem} When $X$ is a smooth quasiprojective variety, perfect complexes and bounded coherent complexes coincides $\text{Perf}_{\mathbb{Z}/2}(X)=\text{Coh}_{\mathbb{Z}/2}(X)$. In general they are different.
\end{rem}
%From the model category point of view, the category of all $\mathbb{Z}/2$-dg categories carries a Morita model structure where fibrant and cofibrant objects can be described explicitly.
%We define the infinity categorical counter parts of $\mathbb{Z}/2$-dg categories as follows:

\subsection{Toric Calabi-Yau $3$-orbifolds}
We refer to \cite{Borisov_2004} for the construction of toric Deligne-Mumford stacks. We carry out the relevant construction in dimension $3$, first in the affine case, then for the general case.  Our notion follows \cite{fang2019genus,fang2019remodeling}.

\subsubsection*{Affine toric Calabi-Yau $3$-orbifolds} 

Let $$\mathbb{T}=(\mathbb{C}^*)^3,\ N=\Hom(\mathbb{C}^*,\mathbb{T}),\ M=\Hom(\mathbb{T},\mathbb{C}^*)=\Hom(N,\mathbb{Z}).$$ We denote $N_R=N\otimes_{\mathbb Z} R$ and $M_R=M\otimes_{\mathbb Z} R$ for $R=\mathbb C$ or $\mathbb R$. Consider a simplicial cone $\sigma\subset N_\mathbb{R}\cong \mathbb{R}^3$ spanned by $b_1,b_2,b_3\in N$, such that the simplicial affine toric variety $X_\sigma:=\spec \mathbb C[\sigma^\vee\cap M]$ has trivial canonical divisor. Thus there is a $u\in M$ with $(u,b_i)=1$. We can pick $\mathbb{Z}$-basis $\{u_i\}$ of $M$ such that $u_3=u$ and let $\{e_i\}\subset N$ be the dual $\mathbb{Z}$-basis. Under these coordinates and up to an $SL_2(\mathbb Z)$-rearrangement, $b_i$ can be written as
\begin{equation}
b_1=re_1-se_2+e_3,\,b_2=me_2+e_3,\,b_3=e_3
\label{eqn:b-rays}
\end{equation}
with positive integers $r$ and $m$, while $s\in\{0,1,\dots,r-1\}$. Such fan data induce a map on the torus
\begin{equation}
1\longrightarrow G\longrightarrow (\mathbb{C}^*)^3\stackrel{\phi}{\longrightarrow} \mathbb T \cong (\mathbb{C}^*)^3\rightarrow 1
\label{eqn:G-group}
\end{equation}
where
$$\phi(t_1,t_2,t_3)=(t_1^r,t_1^{-s}t_2^m,t_1t_2t_3).$$ 
We will write $\rho_i:G\to\mathbb{C}^*$ for the projection to the $i$-th coordinate. Then $\rho_i\in G^\vee=\Hom(G,\mathbb C^*)$ and satisfies $\rho_1\rho_2\rho_3=1$. Moreover $\rho_i$ induces an exact sequence
\[
1\longrightarrow \mu_{m_i}\longrightarrow   G \stackrel{\rho_i}{\longrightarrow} \mu_{r_i}\longrightarrow 1,
\]
where $\mu_k\cong \mathbb Z/(k \mathbb Z)$ denotes the subgroup of $k$-th roots of unity in $\mathbb{C}^*$. One can compute 
\begin{align*}
	& (m_1,r_1)=(m,r), \\
	& (m_2,r_2)=(\gcd(r,s),m\cdot r/\gcd(r,s)), \\
	& (m_3,r_3)=(\gcd(m+s,r),m\cdot r/\gcd(m+s,r)).
\end{align*}
%Likewise $\rho_2$ induces an exact sequence
%$$1\rightarrow\mu_{\text{gcd}(r,s)}\rightarrow G{\rightarrow}\mu_{m\cdot r/\text{gcd}(r,s)}\rightarrow 1$$
The associated affine toric orbifold is $\mathcal{X}(\sigma):=[\mathbb{A}^3/G]$, with coarse moduli $X_\sigma$.

\subsubsection*{General case}

With the notions introduced in the affine case, we consider a convex simplicial fan $\Sigma$ in $N_\mathbb{R}$. Let $\Sigma(d)$ be the set of $d$-dimensional cones in $\Sigma$, we assume $\Sigma(3)\neq\emptyset$. Let $\Sigma(1)=\{\rho_1,\cdots,\rho_{3+p'}\}$ be the set of $1$-cones in $\Sigma$ and $b_i$ be the generator of $\rho_i\cap N\cong\mathbb{Z}_{\geq 0}$. 

Similar to the affine case, by the Calabi-Yau condition, we may choose $b_i=(m_i,n_i,1)$. We think of these coordinates $(m_i,n_i)$ as defining a triangulated convex polytope on the plane placed at height $1$ and $\Sigma$ is the cone over it.

From this combinatorial data, we can define a stacky fan $\Sigma^{can}=(N,\Sigma,\beta^{can}=(b_1,\cdots,b_{3+p'}))$. By \cite{Borisov_2004} there is a toric CY three orbifold associated to this stacky fan $\mathcal{X}(\Sigma)=[U_\Sigma/G_\Sigma]$. For each simplicial cone $\sigma\in\Sigma(3)$, there is an open substack $\mathcal{X}(\sigma)\cong[\mathbb{A}^3/G]$, which is described by the affine case above after putting it in favorable coordinates. Hence we might think of $\mathcal{X}(\Sigma)$ as glued together from $\mathcal X(\sigma)$ for all $\sigma\in\Sigma(3)$. We use $\mathcal X=\mathcal X(\Sigma)$ to denote the toric Calabi-Yau $3$-orbifold we want to study under mirror symmetry throughout the paper.

\subsection{Category of Matrix Factorizations}
Consider the category of bounded  complexes of coherent sheaves $\coh(\mathcal{X})$. Note that there is canonical equivalence 
$$\coh^G(X)=\coh([X/G])$$
and the left hand side is the bounded derived category of $G$-equivariant coherent sheaves on $X$. Recall that a $G$-equivariant coherent sheaf is the data of a coherent sheaf on $X$ equipped with a $G$-linearization. In this note we often use these two categories interchangeably for convenience of phrasing. To compute mapping complex in this category, note that for a finite group $G$ acting on a variety $X$ and equivariant coherent sheaves $E,F$ on $X$, $\ext^n_X(E,F)$ naturally carries a $G$-action, and we can compute $\ext^n_{[X/G]}(E,F)$ as
$$\ext^n_{[X/G]}(E,F)=\ext^n_X(E,F)^G$$
%\rightline{\textcolor{blue}{not qualified as a lemma}}
We now define two variants of $\coh(\mathcal{X})$. Consider a pair $(\mathcal{X},f)$ with $\mathcal{X}$ a smooth DM stack and $f:\mathcal{X}\rightarrow \mathbb{A}^1$ flat morphism, smooth away from origin. In this note, $\mathcal{X}$ will be our toric orbifold, and $f$ is the function given by $u\in M$ above trivializing the canonical bundle. Many categories were constructed to capture the geometry of the singular fiber $f^{-1}(0)=\mathcal{X}_0$. The easier one is the category of singularity.
\begin{defi} Let $\mathcal{X}_0:=f^{-1}(0)$. $D_{Sing}(\mathcal{X}_0)\in \text{DGCat}^{(2)}_{\text{small}}$ is defined to be the dg quotient $\text{Coh}(\mathcal{X}_0)/\text{Perf}(\mathcal{X}_0)$ of bounded coherent complexes by perfect complexes. This $2$-periodic dg category is called the category of singularity.
\end{defi}

For affine schemes $\spec A$ and a function $f\in A$, Orlov \cite{Or04} defined a triangulated category of matrix factorizations. It has as objects pairs of finitely generated projective $A$-modules $(E_0,E_1)$ equipped with maps $e_0:E_0\rightarrow E_1$ and $e_1:E_1\rightarrow E_0$ such that compositions $e_0e_1$ and $e_1e_0$ are equal to multiplication by $f$. Morphisms are maps of pairs commuting with structure maps modulo homotopy. It was later generalized to the nonaffine and equivariant setting \cite{Orlov_2011}. We will use a dg enhenced version of it as in \cite{preygel2011thomsebastiani}.
\begin{defi} We define MF$(\mathcal{X},f)\in \text{DGCat}^{(2)}_{\text{small}}$ to be the $\mathbb{Z}/2$-dg category of matrix factorizations. For further use, we denote $\text{MF}^\infty(\mathcal{X},f)\in \text{DGCat}^{(2)}_{\text{cont}}$ its Ind-completion.
%the category of matrix factorization on $(\mathcal{X},W)$ is the $\mathbb{Z}/2$-dg category whose objects are pairs of finitely generated projective $A$-modules $(E_0,E_1)$ equipped with maps $a:E_0\rightarrow E_1$ and $b:E_1\rightarrow E_0$ such that compositions $ab$ and $ba$ are equal to multiplicaltion by $f$. There is in fact a global definition whose construction we don't need, see [ANATOLY]. We only record the important property that's relevant for us:
\end{defi}
This category of matrix factorization is much more computable. These two categories are related by the following, hence we may translate between them.
\begin{prop}[\cite{Or04}] There is a functor $\text{MF}(\mathcal{X},f)\rightarrow D_{Sing}(\mathcal{X}_0)$ taking a pair $(E_0,E_1)\in$ MF$(\mathcal{X},f)$ to $\text{coker}(e_0)\in D_{Sing}$ viewed as a coherent sheaf on $\mathcal{X}_0$. This is an equivalence of $2$-periodic dg categories.
\end{prop}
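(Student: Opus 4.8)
The statement is Orlov's theorem \cite{Or04}, in the equivariant form of \cite{Orlov_2011} and with the dg enhancements of \cite{preygel2011thomsebastiani}; I would organize the argument as follows.

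\textbf{Construction of the functor.} For a matrix factorization $(E_0,E_1,e_0,e_1)$ the identity $e_0e_1=f\cdot\mathrm{id}_{E_1}$ shows that multiplication by $f$ annihilates $\mathrm{coker}(e_0)$, so $\mathrm{coker}(e_0)$ is a coherent sheaf on $\mathcal X_0=f^{-1}(0)$. A morphism of matrix factorizations induces a morphism of cokernels, and a null-homotopy induces a map that factors through a locally free, hence perfect, sheaf, so it becomes zero in the Verdier quotient $D_{Sing}(\mathcal X_0)$; thus one obtains a $\mathbb Z/2$-dg functor $\mathrm{cok}\colon \MF(\mathcal X,f)\to D_{Sing}(\mathcal X_0)$, the $\mathbb Z/2$-linearity coming from the fact that the shift of a matrix factorization ($E_0\leftrightarrow E_1$) maps to the syzygy of $\mathrm{coker}(e_0)$, which is precisely its shift in $D_{Sing}(\mathcal X_0)$. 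Since both $\MF(-,f)$ and $D_{Sing}(-)$ satisfy Zariski descent as sheaves of $\mathbb Z/2$-dg categories (\cite{preygel2011thomsebastiani}) and $\mathrm{cok}$ commutes with restriction to opens, it suffices to prove the equivalence on an affine equivariant chart $[\spec A/G]$ with $A$ regular and $f\in A^G$ a nonzerodivisor, where all constructions are $G$-equivariant and one may pass to $G$-invariants, an exact operation in characteristic $0$.

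\textbf{Essential surjectivity.} Every object of $D_{Sing}(\mathcal X_0)$ is, up to shift, represented by an honest coherent sheaf $\mathcal F$ on $\mathcal X_0$. Pushing $\mathcal F$ forward to $\mathcal X$ and using smoothness of $\mathcal X$, choose a finite $G$-equivariant locally free resolution; for $d>\dim\mathcal X$ the $d$-th syzygy $\Omega^d\mathcal F$ restricted to $\mathcal X_0$ is a maximal Cohen--Macaulay $\mathcal O_{\mathcal X_0}$-module, and $\mathcal F$ and $\Omega^d\mathcal F|_{\mathcal X_0}$ agree up to shift in $D_{Sing}(\mathcal X_0)$. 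The Eisenbud periodicity theorem — valid $G$-equivariantly since $\mathcal X$ is regular — produces a matrix factorization of $f$ whose cokernel is $\Omega^d\mathcal F|_{\mathcal X_0}$, exhibiting $\mathcal F$ in the essential image of $\mathrm{cok}$.

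\textbf{Full faithfulness and the main obstacle.} It remains to see that $\mathrm{cok}$ is a quasi-isomorphism on mapping complexes: for matrix factorizations $\mathcal E,\mathcal E'$ one identifies the $2$-periodic complex $\Hom_{\MF}(\mathcal E,\mathcal E')$ with the Tate (stable) $\mathrm{RHom}$ over $\mathcal O_{\mathcal X_0}$ computing $\mathrm{RHom}_{D_{Sing}}(\mathrm{cok}\,\mathcal E,\mathrm{cok}\,\mathcal E')$, using that the $\mathcal O_{\mathcal X_0}$-free resolution of $\mathrm{cok}\,\mathcal E$ is eventually $2$-periodic and given precisely by $\mathcal E$; this identification is natural in both variables, $G$-equivariant, and survives taking $G$-invariants. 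The genuine content — hence the main obstacle — is not the bijection on homotopy classes of maps, which is classical, but the upgrade to the level of $\mathbb Z/2$-dg categories (realizing the comparison as an honest dg functor and checking full faithfulness at the chain level) together with running the syzygy/Cohen--Macaulay-approximation argument $G$-equivariantly on a smooth DM stack rather than on an affine scheme; both are supplied by \cite{Orlov_2011,preygel2011thomsebastiani}, and the only extra point is the compatibility of the two Zariski-descent presentations under $\mathrm{cok}$, which holds because $\mathrm{cok}$ is defined fibrewise.
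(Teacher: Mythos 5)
Your overall route is the standard Orlov-style argument (cokernel functor, high syzygies and Eisenbud periodicity for essential surjectivity, stable $\Hom$ for full faithfulness, equivariance by averaging over the finite group in characteristic $0$), which is exactly what the paper delegates to the references: the paper itself gives no proof beyond citing \cite{Or04}, noting the extension to dg enhancements, and pointing to \cite{abouzaid2014homological} for the equivariant discussion. So in substance you are reconstructing the cited proof rather than taking a different route. However, one step as written would fail. In the essential surjectivity paragraph you resolve the pushforward of $\mathcal F$ over the \emph{ambient smooth} stack $\mathcal X$ and then take $\Omega^d\mathcal F$ for $d>\dim\mathcal X$: since $\mathcal X$ is regular, that locally free resolution terminates in length at most $\dim\mathcal X$, so this syzygy is zero and restricting it to $\mathcal X_0$ produces nothing. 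The correct move is to take a locally free resolution over $\mathcal O_{\mathcal X_0}$ (available by the resolution property of these quotient/toric DM stacks, equivariantly by averaging); there the resolution is infinite, a depth/Auslander--Buchsbaum count shows the $d$-th syzygy is maximal Cohen--Macaulay for $d\geq\dim\mathcal X_0$, and it is to this $\mathcal O_{\mathcal X_0}$-syzygy that Eisenbud periodicity applies to produce the matrix factorization. The ambient $\mathcal X$ only enters when lifting the MCM module to a two-term complex of locally free $\mathcal O_{\mathcal X}$-modules.

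A second, smaller caveat concerns your reduction to affine charts: Preygel's descent statement is for the Ind-completed $\MF^\infty$, while the proposition is about the small categories $\MF(\mathcal X,f)$ and $D_{Sing}(\mathcal X_0)$, which in general only satisfy Zariski/\'etale descent up to idempotent completion. Either build that caveat in (the paper's Morita framework works with idempotent complete categories, so this is harmless but should be said), or note that for the case the paper actually needs, $\mathcal X=[\mathbb A^3/G]$ with $G$ finite, no gluing is required at all: one runs the affine argument $G$-equivariantly on $\mathbb A^3$, using $\mathrm{Ext}^*_{[X/G]}(E,F)=\mathrm{Ext}^*_X(E,F)^G$, which is how the cited equivariant treatment proceeds.
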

The theorem was written in the language of triangulated categories, and extends naturally to dg enhencements. See also \cite{abouzaid2014homological} Section 7 for a similar detailed discussion in the equivariant setting.
\begin{prop}[\cite{preygel2011thomsebastiani}] Proposition\, A.3.1
The assignment $U\mapsto\text{MF}^\infty(U,f|_U)$ is a sheaf of $\mathbb{Z}/2$-dg categories under \'etale topology.

\end{prop}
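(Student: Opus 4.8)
The plan is to reduce the claim to \'etale descent for the category of singularities and then to known descent properties of quasi-coherent and ind-coherent sheaves, working throughout in the $\mathbb{Z}/2$-graded setting of \cite{preygel2011thomsebastiani, dyckerhoff2013triangulated} and inside $\text{DGCat}^{(2)}_{\text{cont}}$ so that $\text{Ind}$-completions and localization sequences are available. By Orlov's theorem quoted above, $\text{MF}(U,f|_U)\simeq D_{Sing}(U_0)$ with $U_0:=(f|_U)^{-1}(0)=U\times_{\mathcal X}\mathcal X_0$, and this equivalence is natural in $U$ along the restriction functors. Since formation of the zero fibre commutes with \'etale base change — for an \'etale cover $\{V_i\to U\}$ one has $(V_{i_0}\times_U\cdots\times_U V_{i_k})_0\simeq V_{i_0,0}\times_{U_0}\cdots\times_{U_0}V_{i_k,0}$ — the \v Cech nerve of a cover of $U$ restricts to the \v Cech nerve of the induced cover of $U_0$. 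Hence it suffices to show that $U\mapsto D_{Sing}^\infty(U_0):=\text{Ind}(D_{Sing}(U_0))$ satisfies \'etale descent.

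Next, recall $D_{Sing}(U_0)=\coh(U_0)/\text{Perf}(U_0)$ is a Verdier quotient; since $U_0$ is a hypersurface in a smooth DM stack it is a perfect (indeed Gorenstein) stack, so $\text{Ind}(\text{Perf}(U_0))\simeq\text{QCoh}(U_0)$ and $\text{Ind}(\coh(U_0))\simeq\text{IndCoh}(U_0)$, and $\text{Ind}$-completing the Verdier sequence produces a localization sequence in $\text{DGCat}^{(2)}_{\text{cont}}$
$$\text{QCoh}(U_0)\longrightarrow\text{IndCoh}(U_0)\longrightarrow D_{Sing}^\infty(U_0).$$
For an \'etale map $g:V\to U$ the pullback $g_0^\ast$ is exact and preserves perfect complexes, hence preserves the localizing subcategory $\text{QCoh}$ and descends to $D_{Sing}^\infty$; carrying this over the \v Cech nerve of an \'etale cover, we get a cosimplicial diagram valued in localization sequences of $\text{DGCat}^{(2)}_{\text{cont}}$ whose transition functors respect the respective subcategories.

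Finally, $\text{QCoh}$ satisfies \'etale (indeed fppf) descent — classical, and for perfect stacks see \cite{benzvi2010integral} — and $\text{IndCoh}$ satisfies \'etale descent because \'etale morphisms are flat (Gaitsgory's descent theory for ind-coherent sheaves). Thus the totalizations of the first two terms of the cosimplicial localization sequence recover $\text{QCoh}(U_0)$ and $\text{IndCoh}(U_0)$. What remains — and this is the one genuinely non-formal point — is that a totalization of localization sequences in $\text{DGCat}^{(2)}_{\text{cont}}$, along functors preserving the respective subcategories, is again a localization sequence. I would deduce this from the fact that the quotient functor $\text{IndCoh}(U_0)\to D_{Sing}^\infty(U_0)$ is a Bousfield localization with fully faithful right adjoint, identifying $D_{Sing}^\infty(U_0)$ with the full subcategory of $\text{IndCoh}(U_0)$ right-orthogonal to $\text{QCoh}(U_0)$ — a description by a limit condition which manifestly commutes with the \v Cech totalization; equivalently, one invokes that the class of such short exact sequences in $\text{DGCat}^{(2)}_{\text{cont}}$ is closed under limits. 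Granting this, $\text{Tot}\, D_{Sing}^\infty(V_{\bullet,0})\simeq D_{Sing}^\infty(U_0)$, i.e. the presheaf $U\mapsto\text{MF}^\infty(U,f|_U)$ is a sheaf under \'etale topology. The main obstacle is precisely this interchange of the Verdier quotient with the homotopy limit; everything else is assembling standard descent statements for quasi-coherent and ind-coherent sheaves together with Orlov's comparison.
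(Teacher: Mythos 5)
The paper does not prove this statement; it is quoted from Preygel, and your reduction (Orlov's equivalence, compatibility of zero fibres with \'etale base change, $\mathrm{Ind}(D_{Sing})\simeq \mathrm{IndCoh}/\mathrm{QCoh}$, descent for $\mathrm{QCoh}$ and $\mathrm{IndCoh}$) is indeed the standard route and is close in spirit to the cited argument. However, the step you yourself flag as the ``one genuinely non-formal point'' is exactly where your justification fails as stated. The assertion that the class of localization sequences in $\text{DGCat}^{(2)}_{\text{cont}}$ is closed under limits is false in general: Verdier quotients of cocomplete categories do not commute with totalizations, and this commutation is precisely the content of the cited proposition, not something one may invoke. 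Likewise, the right-orthogonal description $D_{Sing}^\infty(U_0)\simeq \mathrm{QCoh}(U_0)^{\perp}\subset\mathrm{IndCoh}(U_0)$ does not ``manifestly'' commute with the \v Cech totalization. Writing $A_\bullet=\mathrm{QCoh}(V_{\bullet,0})\subset B_\bullet=\mathrm{IndCoh}(V_{\bullet,0})$, only one inclusion is formal: if $b=(b_n)$ is levelwise orthogonal then $\Hom(a,b)=\lim_n\Hom(a_n,b_n)=0$, so $\mathrm{Tot}(A_\bullet^{\perp})\subset A^{\perp}$. The converse—that an object of $\mathrm{IndCoh}(U_0)$ orthogonal to $\mathrm{QCoh}(U_0)$ has levelwise orthogonal components—is not formal, and your check that \'etale pullback preserves $\mathrm{QCoh}$ (hence descends to the quotients) only makes the diagram of quotients well defined; it does not address this direction at all.

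The gap is fixable, but it requires input you have not supplied: for an \'etale map $g$ one has $g^!\simeq g^*$ and the adjunction $\Hom(a_0,g^!b)\simeq\Hom(g_*a_0,b)$, together with the fact that $g_*$ carries $\mathrm{QCoh}$ into $\mathrm{QCoh}$ compatibly with the embedding into $\mathrm{IndCoh}$ (here one uses that \'etale pushforward has finite cohomological amplitude). This gives both that $g^!$ preserves the orthogonal $\mathrm{QCoh}^{\perp}$ and that orthogonality can be detected after pullback along the cover, which is what actually makes $\mathrm{Tot}$ of the quotients agree with the quotient of the $\mathrm{Tot}$'s; some such argument (or the support/completion description used in the cited source) must be spelled out. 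A second, smaller point: your localization sequence lives among $\mathbb{Z}$-graded categories, while the statement concerns $\mathbb{Z}/2$-dg categories, so you need a word on why the $2$-periodic comparison is compatible with the totalization (e.g.\ dualizability of $\text{Mod}_k^{(2)}$, so that folding commutes with limits); this is also a place where the original proof does genuine work rather than a formality.
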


\subsection{Mirror curve construction}
\label{sec:mirror-curve}

The mirror geometry is described by a non-compact Riemann surface. We first describe the affine situation, then more generally. We follow the notion in \cite{fang2019genus,fang2019remodeling}. In particular, we will describe the action of the relevant stacky Picard groups on the mirror curve.

\subsubsection{Affine case} Recall that the combinatorial data for affine toric Calabiy-Yau $3$-orbifold $\mathcal X=[\mathbb C^3/G]$ are just three nonnegative integers $(m,r,s)$ in Equation \eqref{eqn:b-rays}. We define an affine curve 
$$\mathcal{C}_\sigma:=\{H(X,Y)=X^rY^{-s}+Y^m+1=0\}\subset(\mathbb{C}^*)^2.$$
This affine curve has a natural compactification $\overline{\mathcal{C}}_\sigma$ in the projective toric orbifold surface $\mathbb P_{\Delta_\sigma}$ defined by the moment polytope $\Delta_\sigma$ with vertices $0,0),(0,m),(r,-s)$.

The curve $\mathcal C_\sigma$ is an exact symplectic manifold carrying a $G^\vee$-action. We set 
$$w_1=\frac{1}{r}, w_2=\frac{s}{rm}, w_3=-w_1-w_2.$$
There are two distinguished elements in $G\subset (\mathbb C^*)^3$ (c.f. Equation \ref{eqn:G-group})
$$\eta_1=(e^{2\pi\sqrt{-1}w_1},e^{2\pi\sqrt{-1}w_2},e^{2\pi\sqrt{-1}w_3}), \eta_2=(1,e^{2\pi\sqrt{-1}/m},e^{-2\pi\sqrt{-1}/m}).$$
such that they define a bijection of sets (but not necessarily group homomorphism) $\mathbb{Z}/r \times \mathbb{Z}/m\cong G$. For each $\chi\in G^\vee$, the action on the ambient space
$$\chi\cdot(X,Y)=(\chi(\eta_1)X,\chi(\eta_2)Y)$$
gives rise to an action on $\mathcal{C}_\sigma$. Another way to interpret this group action on $\mathcal C_\sigma$ is by the notion of stacky Picard group \cite{ccit2020}. As discussed in \cite[Section 4.7]{fang2019remodeling}, one may identify $G^\vee$ with $\mathrm{Pic}^{\mathrm{st}}(\mathcal X)=\Pic(\mathcal X)/\Pic(X)$, and then the action of a line bundle class $[\mathcal L]\in \Picst(\mathcal X)$ on $\mathcal C_\sigma$ is given in coordinates
\[
[\mathcal L]\cdot X=\exp(-2\pi\sqrt{-1}\mathrm{age}_{(1,0,1)}(\mathcal L)) X,\ [\mathcal L]\cdot Y=\exp(-2\pi\sqrt{-1}\mathrm{age}_{(0,1,1)}(\mathcal L)) Y.
\]

\begin{prop}\label{quot}
The action of $G^\vee$ on $\mathcal{C}_\sigma$ is free, and it extends to an action of $G^\vee$ on $\overline{\mathcal C}_\sigma$. The quotient of $\mathcal{C}_\sigma$ under this action can be identified with three-punctured sphere $\mathbb{P}^1-\{0,1,\infty\}$, while the quotient of $\overline{\mathcal C}_\sigma$ is $\mathbb P^1$. The quotient map $p:\overline{\mathcal C}_\sigma\to \mathbb P^1$ is a ramified covering. The preimages of $\{0,1,\infty\}$ are repsectively the intersection of $\overline{\mathcal{C}}_\sigma$ with three toric divisors of $P_{\Delta_\sigma}$. The group $G^\vee$ permutes transitively inside each $p^{-1}(0)$, $p^{-1}(1)$, $p^{-1}(\infty)$.
\end{prop}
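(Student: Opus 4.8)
The plan is to realize $\mathcal C_\sigma$ as an abelian \'etale cover of the pair of pants coming from an explicit homomorphism of tori, and then to compactify that picture with toric geometry.

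I would first introduce the homomorphism $\phi^\vee\colon(\mathbb C^*)^2\to(\mathbb C^*)^2$, $(X,Y)\mapsto(u,v):=(X^rY^{-s},Y^m)$. Its cocharacter map $\psi\colon(a,b)\mapsto(ar-bs,bm)$ has determinant $rm$, so $\phi^\vee$ is surjective with kernel $K$ a finite subgroup of the torus of order $rm=|G^\vee|$ acting freely by translations. Setting $L:=\{u+v+1=0\}\cap(\mathbb C^*)^2$, one has $\mathcal C_\sigma=(\phi^\vee)^{-1}(L)$, and solving $v=-1-u$ identifies $L$ with $\mathbb A^1\setminus\{0,-1\}\cong\mathbb P^1-\{0,1,\infty\}$. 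Hence $\mathcal C_\sigma\to L$ is a $K$-Galois \'etale cover: $K$ acts freely on $\mathcal C_\sigma$, transitively on each fiber, and $\mathcal C_\sigma/K\cong L$. It remains to match $K$ with $G^\vee$: the map $\chi\mapsto(\chi(\eta_1),\chi(\eta_2))$ is a group homomorphism $G^\vee\to(\mathbb C^*)^2$; it lands in $K$ because $\eta_2^m=(1,1,1)$ and $\eta_1^r=\eta_2^s$ in $G$ (immediate from the definitions of the $w_i$ and $\eta_i$); it is injective because $\eta_1,\eta_2$ generate $G$; hence it is an isomorphism by order count, under which $\chi\cdot(X,Y)=(\chi(\eta_1)X,\chi(\eta_2)Y)$ becomes the translation action of $K$. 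This settles every assertion about $\mathcal C_\sigma$.

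For the compactification, $K$ lies in the dense torus of $\mathbb P_{\Delta_\sigma}$, so it acts there and preserves $\overline{\mathcal C}_\sigma$. The next step is to show $\phi^\vee$ extends to a finite toric morphism $\overline\phi\colon\mathbb P_{\Delta_\sigma}\to\mathbb P^2$, where $\mathbb P^2$ is the compactification of $(\mathbb C^*)^2_{(u,v)}$ along the triangle $\mathrm{conv}\{(0,0),(1,0),(0,1)\}$, so that $L$ closes up to a line $\overline L\cong\mathbb P^1$ meeting each of the three coordinate lines in one point. For this I would compute the inner normals of the three edges of $\Delta_\sigma=\mathrm{conv}\{(0,0),(0,m),(r,-s)\}$, obtaining the rays $\mathbb R_{\ge0}(1,0)$, $\mathbb R_{\ge0}(s,r)$, $\mathbb R_{\ge0}(-(m+s),-r)$, and check that $\psi$ sends them, up to positive scaling, onto the rays $(1,0),(0,1),(-1,-1)$ of the fan of $\mathbb P^2$; since $\psi_{\mathbb R}$ is a linear isomorphism carrying one fan onto the other, $\overline\phi$ exists and realizes $\mathbb P_{\Delta_\sigma}/K\cong\mathbb P^2$. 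Then $\overline p:=\overline\phi|_{\overline{\mathcal C}_\sigma}$ is $K$-invariant with image $\overline L$ (properness), and factors through a finite birational map $\overline{\mathcal C}_\sigma/K\to\overline L\cong\mathbb P^1$; using that $\mathcal C_\sigma$ is irreducible — the polynomial $X^r+Y^s(Y^m+1)$ is irreducible over $\mathbb C(Y)$ since $Y^m+1$ is squarefree — this map is an isomorphism, so $\overline{\mathcal C}_\sigma/G^\vee\cong\mathbb P^1$, and $\overline p$ is a degree-$rm$ cover, \'etale over $L$ and thus branched only over $\{0,1,\infty\}$ (genuinely ramified once $rm>1$, since $\overline{\mathcal C}_\sigma$ is connected). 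Finally, because $\rho_i$ is the unique ray of $\mathbb P_{\Delta_\sigma}$ lying over the $i$-th ray of $\mathbb P^2$, the preimage under $\overline\phi$ of the $i$-th coordinate line is supported on $D_{\rho_i}$, which together with $\overline p(\overline{\mathcal C}_\sigma)=\overline L$ gives $\overline p^{-1}(\text{$i$-th puncture})=\overline{\mathcal C}_\sigma\cap D_{\rho_i}$; and $G^\vee$ acts transitively on each of these fibers because they are $K$-orbits.

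I expect the main obstacle to be the toric bookkeeping of the third step: setting up the two compatible toric structures and the normal fan of $\Delta_\sigma$ carefully enough that $\overline\phi$ exists, realizes the quotient $\mathbb P_{\Delta_\sigma}/K$, and matches the three punctures with the three toric boundary divisors; the supplementary input that $\mathcal C_\sigma$ is irreducible enters here as well. The affine assertions, by contrast, reduce to a direct computation once $\phi^\vee$ has been written down.
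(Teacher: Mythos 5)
Your proposal is correct, but it proves the proposition by a genuinely different route than the paper. The paper's own proof is a short sketch: it defers the details to \cite[Section 6.4]{fang2019genus}, counts the boundary points $\#(\overline{\mathcal{C}}_\sigma\cap E_i)=m_i$ via the lattice points on the edges of $\Delta_\sigma$, asserts freeness and transitivity by ``direct computation'', and identifies the quotient with the thrice-punctured sphere by the Riemann--Hurwitz formula. You instead construct the quotient map directly: the isogeny $\phi^\vee(X,Y)=(X^rY^{-s},Y^m)$ (which is exactly the map $p$ of Remark \ref{rmk:quotient}) exhibits $\mathcal{C}_\sigma\to\{u+v+1=0\}$ as a Galois \'etale cover with deck group $K=\ker\phi^\vee$; you identify $K\cong G^\vee$ via $\chi\mapsto(\chi(\eta_1),\chi(\eta_2))$, using $\eta_2^m=1$, $\eta_1^r=\eta_2^s$ and that $\eta_1,\eta_2$ generate $G$ together with the order count $|K|=rm=|G^\vee|$; and you compactify through the toric morphism $\mathbb{P}_{\Delta_\sigma}\to\mathbb{P}^2$ obtained by checking that $\psi$ sends the normal-fan rays $(1,0),(s,r),(-(m+s),-r)$ to $(1,0),(0,1),(-1,-1)$ up to positive scaling, concluding with Zariski's main theorem (finite, birational onto the normal curve $\overline{L}\cong\mathbb{P}^1$) and irreducibility of $X^r+Y^s(Y^m+1)$ — all of which I checked and which hold (the squarefreeness of $Y^m+1$ gives simple zeros away from $0$, so $-Y^s(Y^m+1)$ is neither a $p$-th power nor of the form $-4f^4$ in $\mathbb{C}(Y)$, covering the standard criterion for $X^r-a$). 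Your route buys self-containedness and a structural identification of the deck group and of the fibers over $\{0,1,\infty\}$ with $\overline{\mathcal{C}}_\sigma\cap E_i$, avoiding both the external citation and the Hurwitz computation; the paper's route buys brevity and yields as a by-product the explicit count $m_i$ of boundary points on each divisor, which the paper uses later when labeling the circles $\widetilde{\mathcal C_\tau}$ by $G_\tau^\vee$ — your argument gives transitivity of $G^\vee$ on these fibers but not their cardinality, which is enough for the proposition as stated.
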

\begin{proof} See \cite[Section 6.4]{fang2019genus} for details. The three edges of the triangle $\Delta_{\sigma}$ define three toric divisor $E_i$ and $\mathcal{C}_\sigma$ is obtained from $\overline{\mathcal{C}}_\sigma$ by removing the intersection $\overline{\mathcal{C}}_\sigma\cap E_i$. The number $\#(\overline{\mathcal{C}_\sigma}\cap E_i)+1$ is the number of the lattice point on corresponding edge, which is exactly $m_i+1$ as we defined earlier. Direct computation shows that this action is free, and permutes inside each $\overline{\mathcal{C}_\sigma}\cap E_i$ transitively. That the quotient is a three-punctured sphere is an easy application of Hurwitz formula.
\end{proof}

\begin{rem}
	\label{rmk:quotient} 
    The quotient map $p:\mathcal C_\sigma \to \mathbb P^1-\{0,1,\infty\}$ is given by 	
	\[
	(X,Y)\mapsto (X^rY^{-s},Y^m),
  	\]
	where the image is identified with the curve $1+X+Y=0$ for $X,Y\in \mathbb C^*$. 
\end{rem}

The curve $\mathcal C_\sigma$ has another compactification $\widetilde{\mathcal C_\sigma}$ by attaching an $S^1$ to each of its puncture. We only consider this compactification topologically. We denote $\widetilde{\mathcal C_\tau}$ to be the collection of $S^1$ in $\partial \widetilde{\mathcal C_\sigma}$ for each $2$-cone $\tau$ as the face of $\mathcal \sigma$, so that $\partial \widetilde{\mathcal C_\sigma}=\cup_{\tau\subset \sigma} \widetilde{\mathcal C_\tau}$. The action of $\Picst(\mathcal X_\sigma)=G^\vee$ on $\mathcal C_\sigma$ extends to an action on $\widetilde{\mathcal C_\sigma}$. 

\subsubsection{General case}
\label{sec:general-mirror-curve} Recall that the combinatorial data needed to produce a general Calabi-Yau 3-orbifold is a fan $\Sigma$ constructed from taking cone over a triangulated planar polygon $\Delta$. From these data we can cook up a polynomial $H(X,Y,q)$ and define the mirror curve $\mathcal{C}=\{H(X,Y,q)=0\}\subset(\mathbb{C}^*)^2$.

We follow the notion of \cite{fang2019remodeling}. The complex parameter $q\in \mathbb C^{\mathfrak p}$, where the dimension of the extended K\"ahler moduli $\mathfrak p=\mathrm{dim} H^2_{\mathrm{CR}}(\mathcal X)$ is the number of the integer points in $\Delta$ minus $3$. We write $q=(q_K,q_{\mathrm{orb}})$, where $q_K$ is the first $\mathfrak p'=\mathrm{dim} H^2(\mathcal X)$ coordinates and $q_{\mathrm{orb}}$ is the remaining $\mathfrak p-\mathfrak p'$ coordinates. The mirror curve has a compactification $\overline{\mathcal C}$ in the toric orbifold surface $\mathbb P_\Delta$ prescribed by its moment polytope $\Delta$.

As $q\to 0$, the compactified mirror curve $\overline{\mathcal C}$ degenerates into
\[
\overline{\mathcal C_0}=\bigcup_{\sigma \in \Sigma(3)} \overline{\mathcal C_{0,\sigma}},	
\]
and each $\mathcal C_{0,\sigma}\cong \mathcal C_\sigma$ is just the mirror curve of the corresponding affine toric Calabi-Yau $3$-orbifold $\mathcal X_\sigma$. The action of the stacky Picard group $\Picst(\mathcal X)$ on $\overline{\mathcal C}_0$ preserves each component. Let $\sigma$ and $\sigma'$ be two adjacent $3$-cones and $\tau = \sigma \cap \sigma'$ be their non-empty intersection $2$-cone. The action of $\Picst(\mathcal X)$ on $\mathcal C_{0,\sigma}$ (resp. $\mathcal C_{0,\sigma'}$, $\mathcal C_{0,\tau}$) factors through $\Picst(\mathcal X_\sigma)$ (resp. $\Picst(\mathcal X_{\sigma'})$, $\Picst(\mathcal X_\tau)$) under the following diagram
\[
   \begin{CD}
		\Picst(\mathcal X) @>>> \Picst(\mathcal X_\sigma)=G_\sigma^\vee\\
		@VVV @VVV\\
		\Picst(\mathcal X_{\sigma'})=G_{\sigma'}^\vee @>>> \Picst(\mathcal X_\tau)=G_{\tau}^\vee.
   \end{CD}
\]
Here $\mathcal C_{0,\tau}=\overline{\mathcal C_{0,\sigma}}\cap \overline{\mathcal C_{0,\sigma'}}$ is a set of $|G_\tau|$ points.

We set $q_{\mathrm{orb}}=0$ and $q_K$ to be positive and very small. The resulting mirror curve $\mathcal C$ is smooth, and allows the following decomposition
\[
\widetilde{\mathcal C} =\bigcup_{\sigma\in \Sigma(3)} \widetilde{\mathcal C_\sigma^\circ},	
\]
where each component is identified with the mirror curve of the affine toric Calabi-Yau $3$-orbifold $\mathcal X_\sigma$ via a diffeomorphism $u_\sigma:\mathcal C^\circ_\sigma \cong \mathcal C_\sigma$. The action of $\Picst(\mathcal X)$ preserves each $\mathcal C^\circ_\sigma$ (and $\widetilde{\mathcal C^\circ_\sigma}$) but does not necessarily factors through $\Picst(\mathcal X_\sigma)$. We use the identification $u_\sigma$ to carry the action of $\mathcal C_\sigma$ on $\mathcal C_\sigma^\circ$ (and to the action on its extenstion to $\widetilde{\mathcal C_\sigma^\circ}$). From here we always talk about the action on $\mathcal C_\sigma^\circ$ and $\widetilde{\mathcal C_\sigma^\circ}$ via the identification $u_\sigma$.

For two adjacent $3$-cones $\sigma,\sigma'$ with $\tau=\sigma \cap \sigma'$, $\widetilde{\mathcal C_\tau}=\widetilde{\mathcal C^\circ_{\sigma}}\cap \widetilde{\mathcal C^\circ_{\sigma'}}$ is a disjoint union of $|G^\vee_\tau|$ circles. By taking limit, the action of $\Picst(\mathcal X_\sigma)$ (resp. $\Picst(\mathcal X_{\sigma'})$) extends to $\widetilde{\mathcal C_\tau}$. We label these circles (connected components of $\widetilde{\mathcal C_\tau}$) by $G^\vee_\tau$ such that the action of $\Picst(\mathcal X_\tau)$ (via its lift to $\Picst(\mathcal X_\sigma)$ or $\Picst(\mathcal X_{\sigma'})$) permutes these labelings by group multiplication. Notice that the group $\Picst(\mathcal X_\tau)$ does not act on $\widetilde {\mathcal C_\tau}$, only on the set of its connected components. Similary for a $2$-cone $\tau$ in a single $3$-cone $\sigma$, we also choose a labeling of the connected components of $\widetilde{\mathcal C_\tau}$ by $G^\vee_\tau$ and then $G^\vee_\tau$ acts on it by multiplication.

%As we will only care about the topology of the curve, the parameter $q$ will not be important so long as the curve is smooth. If $\Sigma$ consists of only one cone $\sigma$, the construction reduces to affine case above where we secretly take $q=0$. For the concrete definition of the equation of mirror curve, see \cite{fang2019remodeling} Section 4.2. 

\begin{rem}
We also record here a combinatorial description of the mirror curve taken from Section 5.4 of \cite{fang2019remodeling} and refer to Figure 4 there for illustration and example. We mark each triangle by the cone $\sigma$ over it and think of the mirror curve $\mathcal{C}_\sigma$ sits inside of this triangle, with corresponding punctures living on the corresponding edges. Then the global curve $\mathcal{C}_\Sigma$ can be described as gluing these $\mathcal{C}_\sigma$ together along the punctures on the same edge.
\end{rem}

\subsection{Topological Fukaya category}
The topological Fukaya category is a cosheaf of dg categories designed to model the wrapped Fukaya category of a punctured Riemann surface $\mathcal{C}$. The definition is due to \cite{dyckerhoff2013triangulated}. Here we follow the recent treatment of \cite{Pascaleff_2019}. We collect the definition, some useful properties and refer to the original work for detailed construction. First we recall the generalities about ribbon graphs. 
\begin{defi}A graph $\Gamma$ is the datum of a pair of finite sets $(H,V)$ equipped with an involution $\tau:H\rightarrow H$ and a map $s:H\rightarrow V$. The elements in $H$ are called half-edges and elements in $V$ are vertices. The set of edges $E$ is the  conjugacy classes of half-edges under $\tau$. The preimage $s^{-1}(v)$ is the set of half-edges adjacent to $v$. 
\end{defi}
We allow graphs to have edges with only one end, i.e. they are invariant under $\tau$. These edges are called external edges while others are internal edges. We can subdivide an edge by adding a vertex at the middle, and we often assume that a graph is sufficiently subdivided. In fact, subdividing won't change the construction of topological Fukaya category. By an open subgraph $\Gamma'\subset \Gamma$, we mean a subgraph that if a vertex $v$ lies in $\Gamma'$, all the half-edges adjacent to it will also lie in $\Gamma'$. Note that open subgraphs are closed under finite intersections and unions.
%Given a punctured Riemann surface $\mathcal{C}$, by a \emph{skeleton} $\Gamma$ we mean a graph embedded into $\mathcal{C}$ such that $\mathcal{C}$ strongly deformation retracts to $\Gamma$.  For a skeleton $\Gamma$, we can order the set of edges incident a vertex counter-clockwise with respect to the orientation of $\mathcal{C}$. Hence each skeleton comes with a canonical ribbon graph structure.\\
%In our treatment we can always assume the valency of each vertex is less than or equal to $3$.
\begin{defi} A ribbon graph $\Gamma$ is the datum of a graph with a cyclic order at each vertex for the set of the edges incident to it.
\end{defi}
One can interpret the ribbon graph structure as follows. For a graph $\Gamma$, we define the incidence category $I(\Gamma)$. It has objects $V \cup E$ and for each internal halfedge $h$ a morphism from $s(h)$ to $[h]$. We have a presheaf $\gamma:I(\Gamma)\rightarrow Set$ which, on objects, takes $v\in V$ to $s^{-1}(v)$ and $[h]\in E$ to $\{h,\tau(h)\}$. For a morphism $v\rightarrow [h]$ given by $h\in H$, we define map $H(v)\rightarrow\{h,\tau(h)\}$ taking $h$ to $h$ and the rest to $\tau(h)$. Here $\gamma$ is called the incidence diagram of the graph $\Gamma$.
 %\begin{defi} Given a ribbon graph $\Gamma$, we can associate to it a topologcial Fukaya category $\mathcal{F}^{top}(\Gamma)$ and a topological compact Fukaya category $\mathcal{F}_{top}(\Gamma)$ both living in $\text{DGCat}_{\text{small}}^{(2)}$. See \cite{Pascaleff_2019} for detailed construction. For further use, We denote $\mathcal{F}^{top}_\infty(\Gamma)\in\text{DGCat}_{\text{cont}}^{(2)}$ the Ind-completion of the topological Fukaya category.
%\end{defi} 
\begin{prop} A ribbon graph structure on $\Gamma$ is equivalent to a lift of the functor $\gamma$ to $I(\Gamma)\rightarrow\Lambda$, where $\Lambda$ is the category of finite cyclically ordered sets.
\end{prop}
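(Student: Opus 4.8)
The plan is to unwind both sides of the claimed equivalence into explicit combinatorial data and check that they coincide, following \cite{dyckerhoff2013triangulated}. Recall that an object of $\Lambda$ is a finite nonempty cyclically ordered set, and a morphism $f\colon S\to T$ consists of a map of underlying sets together with, for each $t\in T$, a linear order on the fiber $f^{-1}(t)$, subject to the condition that the cyclic order on $S$ is recovered by replacing each point of $T$ (traversed in its cyclic order) by its linearly ordered fiber; equivalently, $f$ is a degree-one cyclic monotone map. Under the forgetful functor $\Lambda\to\mathrm{Set}$ only the underlying set is remembered, so a lift of $\gamma$ to $\Lambda$ means: a cyclic order on each value $\gamma(x)$, together with an upgrade of each structure map $\gamma(v)\to\gamma([h])$ to a morphism in $\Lambda$, functorially in $I(\Gamma)$.

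First I would note that functoriality is vacuous here: in $I(\Gamma)$ every non-identity morphism goes from a vertex $v$ to an edge $[h]$, and no two non-identity morphisms are composable, so a lift is just an unconstrained choice of the data above. On edges there is no data: for an internal edge $[h]$ the set $\gamma([h])=\{h,\tau(h)\}$ has two elements and hence a unique cyclic order, and for an external edge it is a singleton. On a vertex $v$, a cyclic order on $\gamma(v)=s^{-1}(v)$ is precisely a ribbon structure at $v$. So the content reduces to the structure maps.

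Next I would analyze, for an internal half-edge $h$ with $s(h)=v$, the collapse map $f\colon s^{-1}(v)\to\{h,\tau(h)\}$ sending $h\mapsto h$ and every other half-edge at $v$ to $\tau(h)$. Given a cyclic order on $s^{-1}(v)$, a $\Lambda$-structure on $f$ is a linear order on $f^{-1}(h)=\{h\}$ (no choice) together with a linear order on $f^{-1}(\tau(h))=s^{-1}(v)\setminus\{h\}$ whose insertion after $h$ in the $2$-cycle $(h,\tau(h))$ reproduces the given cyclic order; this forces exactly the linear order obtained by cutting the cyclic order immediately after $h$, so it exists and is unique. Hence each structure map lifts in exactly one way, and since an edge carries no data this imposes no relation between the cyclic orders at distinct vertices. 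Therefore a lift of $\gamma$ to $\Lambda$ is the same thing as a cyclic order on each $s^{-1}(v)$, i.e.\ a ribbon structure, and the two assignments are mutually inverse and natural in $\Gamma$.

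The only point requiring attention is the bookkeeping: recalling the description of morphisms in $\Lambda$ in the form used above, and getting the orientation convention right in the last step (cutting after the half-edge mapping to $h$ rather than after the one mapping to $\tau(h)$). I would sanity-check the resulting bijection on the rose graph with one vertex and $n$ loops, where both sides are manifestly the set of cyclic orders on the $2n$ half-edges. I do not anticipate any genuine obstacle; the statement is a definitional repackaging, with all the substance living in the standard presentation of $\Lambda$ as finite cyclically ordered sets with cyclic maps.
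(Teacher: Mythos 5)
Your proof is correct, and it is essentially the intended argument: the paper states this proposition without proof, recalling it from the framework of \cite{dyckerhoff2013triangulated} and \cite{Dyckerhoff_2017}, where the content is exactly the definitional unwinding you give (functoriality vacuous since $I(\Gamma)$ has no composable non-identity morphisms, edge values carry unique cyclic orders, and each collapse map $s^{-1}(v)\to\{h,\tau(h)\}$ lifts uniquely once a cyclic order at $v$ is fixed). The only implicit hypothesis worth noting is that $\Gamma$ has no valency-zero vertices, since $\Lambda$ consists of nonempty cyclic sets; this holds for all graphs used in the paper.
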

Making use of the interstice duality (\cite{Dyckerhoff_2017} Proposition 1.8) $\Lambda\cong\Lambda^{op}$, we obtain two functors $\gamma:I(\Gamma) \rightarrow \Lambda$ and $\delta:I(\Gamma) ^{op} \rightarrow\Lambda$.
\begin{defi} We introduce the category $Rib$ of ribbon graphs as follows. A morphism $(f,\eta)$ of ribbon graphs $\Gamma\rightarrow\Gamma'$ consists of a functor $f:I(\Gamma)\rightarrow I(\Gamma')$ and a natural transformation $\eta:f^*\gamma'\rightarrow\gamma$. 
\end{defi}
We also consider the category $Rib^*$ of pointed ribbon graphs whose objects are $(\Gamma,x)$ where $\Gamma$ is a ribbon graph and $x$ is an object of $I(\Gamma)$. A morhpism $(\Gamma,x)\rightarrow(\Gamma',y)$ consists of a morphism $(f,\eta)$ of ribbon graphs and a morphism $y\rightarrow f(x)$ in $I(\Gamma')$. There is a forgetful functor $\pi:Rib^*\rightarrow Rib$ and an evaluation functor $ev:Rib^*\rightarrow \Lambda:ev((\Gamma,x))=\delta(x)$. From this we can cook up a state sum functor.
\begin{defi}
For any $\infty$-cateogry $C$ with all colimits and $X:N(\Lambda)\rightarrow C$ a cocyclic object in $C$, we call the following functor state sum of $X$ on $\Gamma$,
$$\rho_X=N(\pi)_!(X\circ N(ev)):N(Rib)\rightarrow C$$
where, to be precise, we take nerve of the $1$-categories to view it as an $\infty$-category, and $N(\pi)_!$ is the $\infty$-categorical left Kan extension. Note that by the formula of left Kan extension, we have
$$\rho_X(\Gamma)=X(\Gamma)=\text{colim}X\circ N(\delta)$$
where the homotopy colimit is taken over $I(\Gamma)^{op}$, the opposite of incidence category.
\end{defi}
We will plug in a specific cocyclic object $\mathcal{E}^*$ to obtain the definition of topological Fukaya category.
\begin{prop} (\cite{Dyckerhoff_2017} Theorem 3.2) There is a 2-Segal cocyclic object $\mathcal{E}^*:N(\Lambda)\rightarrow \text{DGCat}^{(2)}_{\text{small}}$ whose value $\mathcal{E}^n$ can be computed as the $\mathbb{Z}/2$-folding of the dg category of perfect modules over $A_{n-1}$ quiver.
\end{prop}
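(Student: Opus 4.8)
The plan is to realize $\mathcal{E}^\bullet$ as the $\mathbb{Z}/2$-folding of a single well-understood object and then read off both the cocyclic functoriality and the $2$-Segal condition from the structure already present there. Concretely, one takes the Waldhausen $S_\bullet$-construction of the stable $\infty$-category $\text{Perf}_{\mathbb{Z}/2}(k)$: its value in degree $m$ is the category of flags $0=X_0\hookrightarrow X_1\hookrightarrow\cdots\hookrightarrow X_m$ equipped with chosen subquotients, and rewriting such a flag in terms of its successive cofibers identifies $S_m\,\text{Perf}_{\mathbb{Z}/2}(k)$ with $\text{Fun}(A_m,\text{Perf}_{\mathbb{Z}/2}(k))\simeq\text{Perf}_{\mathbb{Z}/2}(kA_m)$ for the linearly oriented quiver. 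Under the standard reindexing matching the level $m$ of $S_\bullet$ with the $(m{+}1)$-gon, this is exactly the asserted formula $\mathcal{E}^n\simeq\text{Perf}_{\mathbb{Z}/2}(kA_{n-1})$. It is worth carrying along the geometric avatar: $\mathcal{E}^n$ is the topological (partially wrapped) Fukaya category of the $2$-disk with $n$ marked points on its boundary, i.e. of the $n$-gon, and it is this picture that makes the gluing arguments in the body of the paper transparent.

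For the functoriality over $N(\Lambda)$ I would separate the cosimplicial part from the rotation. The cosimplicial part is just the simplicial structure on $S_\bullet$: face maps delete a step of the flag, degeneracies repeat one, and the inner maps merge two consecutive subquotients, all of which become restriction and pushforward functors along maps of linearly oriented $A$-quivers, with no choices involved. The genuinely cyclic datum, the rotation $\mathcal{E}^n\to\mathcal{E}^n$ by one marked point, is where $2$-periodicity is indispensable. The $A_{n-1}$-quiver has no rotational symmetry, but the geometric $\mathbb{Z}/n$-action rotating the marked disk does act on its Fukaya category, and under the equivalence with $\text{Perf}(kA_{n-1})$ it is implemented by the Auslander--Reiten translation $\tau$ (possibly twisted by a shift). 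Over $\mathbb{Z}$-graded coefficients one therefore obtains only a \emph{paracyclic} object, since a full turn returns the monodromy $\tau^{n}\simeq[-2]$; after $\mathbb{Z}/2$-folding this even shift becomes the identity, so the paracyclic structure descends to an honest cyclic one. The clean way to make all of this coherent as a single functor $N(\Lambda)\to\text{DGCat}^{(2)}_{\text{small}}$, rather than level by level, is to invoke the paracyclic refinement of the $S_\bullet$-construction of Dyckerhoff--Kapranov and then to observe that applying $(-)_{\mathbb{Z}/2}$ trivializes its monodromy.

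It then remains to verify the $2$-Segal condition: for every $n$ and every diagonal splitting the $n$-gon into an $a$-gon and a $b$-gon with $a+b=n+2$, the comparison functor
$$\mathcal{E}^{n}\longrightarrow\mathcal{E}^{a}\underset{\mathcal{E}^{2}}{\times}\mathcal{E}^{b}$$
is an equivalence in $\text{DGCat}^{(2)}_{\text{small}}$. In the flag picture this asserts that a length-$n$ flag is the same datum as a length-$a$ flag and a length-$b$ flag sharing a common subquotient; unwound, it is precisely the octahedral / $3\times3$-diagram axiom in $\text{Perf}_{\mathbb{Z}/2}(k)$, namely that cofiber sequences compose and re-bracket coherently. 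I would deduce it from the general theorem of Dyckerhoff--Kapranov that $S_\bullet\mathcal{A}$ is $2$-Segal for every stable (or exact) $\infty$-category $\mathcal{A}$; unwinding their argument in the special case $\mathcal{A}=\text{Perf}_{\mathbb{Z}/2}(k)$ amounts only to the fact that biCartesian squares in a stable category glue, which is routine once the diagrammatics are written down.

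The step I expect to be the real obstacle is not the $2$-Segal condition, which is formal given the $S_\bullet$-machinery, but assembling the cyclic symmetry coherently. The entire reason for working with $\mathbb{Z}/2$-dg (i.e. $2$-periodic) categories rather than $\mathbb{Z}$-graded ones is precisely to trivialize the paracyclic monodromy so that rotation by a full turn \emph{is} the identity and not merely naturally isomorphic to it; making that reduction precise -- i.e. checking that the paracyclic monodromy of $S_\bullet\text{Perf}(k)$ really is an even shift, so that it dies upon folding -- is the crux of the argument.
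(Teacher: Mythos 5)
The paper does not prove this proposition: it is quoted directly from \cite{Dyckerhoff_2017} (Theorem 3.2), so there is no internal proof to compare against. Your sketch reproduces the argument of that cited source --- the Waldhausen $S_\bullet$-construction of the $2$-periodic category of perfect complexes, the identification of its levels with $\text{Perf}_{\mathbb{Z}/2}$ of linear $A$-quivers giving $\mathcal{E}^n\simeq\text{Perf}_{\mathbb{Z}/2}(A_{n-1})$, the $2$-Segal property from the general theorem on $S_\bullet$-constructions, and the fact that the paracyclic monodromy is the even shift $\tau^{n}\simeq[-2]$ (the fractional Calabi--Yau property of $A_{n-1}$) and hence is trivialized by the $\mathbb{Z}/2$-folding --- so it is correct and essentially the intended proof.
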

\begin{defi} We call the state sum of $\mathcal{E}^*$ on a ribbon graph $\Gamma$ the topological Fukaya category of the ribbon graph $\mathcal{F}^{top}(\Gamma)\in \text{DGCat}^{(2)}_{\text{small}}$. We write $\mathcal{F}^{top}_\infty(\Gamma)\in \text{DGCat}^{(2)}_{\text{cont}}$ for its ind completion.
\end{defi}
\begin{rem} (1) If one dualizes above construction, the output is called the compact Fukaya cateogry of the ribbon graph $\mathcal{F}_{top}(\Gamma)$. As we will never use it, we won't repeat the dual construction here. See Definition 4.10 of \cite{Pascaleff_2019}.\\
 (2) Here we follow the convention of \cite{Pascaleff_2019}. In the original work of \cite{dyckerhoff2013triangulated} these two categories are called topological Fukaya and coFukaya category to distinguish their functoriality. The current terminology is picked to reflect that they model different kinds of Fukaya categories from symplectic geometry.\\
(3) There are other possbile constructions of the topological Fukaya category. Nadler in \cite{nadler2016wrapped} constructed a combinatorial Fukaya category using microlocal sheaf theory. In the case of cotangent bundle, his wrapped microlocal sheaf category $\mu Sh_\Gamma^w(-)$ behaves as a cosheaf and is a candidate for the local model of the topological Fukaya category in higher dimension.
\end{rem}
The following properties follow from the construction and characterize the topological Fukaya categories:
\begin{prop} \label{glue}(\cite{Pascaleff_2019} Section 5) $\mathcal{F}^{top}$ and $\mathcal{F}_{top}$ sastisfy the following properties:\\
(1) Given an open subgraph $U\subset\Gamma$, we have a corestriction functor and a restriction functor $$C_U:\mathcal{F}^{top}(\Gamma)\leftarrow\mathcal{F}^{top}(U);\,R_U:\mathcal{F}_{top}(\Gamma)\rightarrow\mathcal{F}_{top}(U) $$
(2) Given two open subgraphs $U$ and $V$ such that $U\cup V=\Gamma$, we have a pushout diagram
and a pullback diagram in $\text{DGCat}_{\text{small}}^{(2)}$
\begin{center}
\begin{tikzcd}
\mathcal{F}^{top}(U\cap V)\arrow{r}{C_{U\cap V}}\arrow{d}{C_{U\cap V}}&\mathcal{F}^{top}(U)\arrow{d}{C_{U}}\\
\mathcal{F}^{top}(V)\arrow{r}{C_{V}}&\mathcal{F}^{top}(U \cup V)
\end{tikzcd}
\begin{tikzcd}
\mathcal{F}_{top}(U\cap V)&\mathcal{F}_{top}(U)\arrow{l}{R_{U\cap V}}\\
\mathcal{F}_{top}(V)\arrow{u}{R_{U\cap V}}&\mathcal{F}_{top}(U\cup V)\arrow{l}{R_V}\arrow{u}{R_U}
\end{tikzcd}
\end{center}
(3) For the ribbon graph $\Gamma^n$ consisting of only one vertex and $n$ external edges attached to it equipped with arbitrary cyclic order, one can compute $$\mathcal{F}^{top}(\Gamma^n)\cong\text{Perf}_{\mathbb{Z}/2}(A_{n-1})$$
(4) We have a duality between these two types of topological Fukaya categories
$$\mathcal{F}_{top}(\Gamma)\cong \text{Fun}(\mathcal{F}^{top}(\Gamma),\text{Mod}_k^{(2),\omega})$$
\end{prop}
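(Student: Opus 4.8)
The plan is to read off all four statements from the left Kan extension description $\mathcal{F}^{top}(\Gamma)\simeq\operatorname{colim}_{I(\Gamma)^{op}}\mathcal{E}^*\circ N(\delta)$, using as substantive inputs only the functoriality of the state sum $\rho_{\mathcal{E}^*}\colon N(Rib)\to\text{DGCat}^{(2)}_{\text{small}}$, the $2$-Segal property of $\mathcal{E}^*$, and the interstice self-duality $\Lambda\cong\Lambda^{op}$. The compact Fukaya category $\mathcal{F}_{top}$ is meanwhile built from the dual state sum, with the right Kan extension $N(\pi)_*$ in place of $N(\pi)_!$ (\cite[Definition 4.10]{Pascaleff_2019}). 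I would first fix $\Gamma$ and consider the poset $\mathrm{Op}(\Gamma)$ of its open subgraphs. The defining condition on an open subgraph $U\subset\Gamma$, namely that it contains every half-edge incident to a vertex it contains, is precisely what guarantees $s_U^{-1}(v)=s_\Gamma^{-1}(v)$ for every vertex of $U$, so the inclusion induces a functor $I(U)\to I(\Gamma)$ together with a compatible transformation of incidence diagrams, i.e.\ a morphism in $Rib$. Applying $\rho_{\mathcal{E}^*}$ and its dual to this morphism produces the corestriction $C_U$ and restriction $R_U$ of part~(1), and one checks functoriality in $U$, so that $U\mapsto\mathcal{F}^{top}(U)$ is a precosheaf and $U\mapsto\mathcal{F}_{top}(U)$ a presheaf on $\mathrm{Op}(\Gamma)$.

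For part~(2): open subgraphs are stable under intersection, so when $U\cup V=\Gamma$ one has a decomposition of incidence categories $N(I(\Gamma))=N(I(U))\cup_{N(I(U\cap V))}N(I(V))$. The asserted pushout square then says exactly that the assignment $W\mapsto\operatorname{colim}_{I(W)^{op}}\mathcal{E}^*\circ N(\delta)$ carries this gluing of index categories to a homotopy pushout in $\text{DGCat}^{(2)}_{\text{small}}$. This excision property is where the $2$-Segal hypothesis on $\mathcal{E}^*$ is genuinely used: I would invoke the cosheaf criterion of \cite{dyckerhoff2013triangulated, Dyckerhoff_2017} -- that the state sum of a $2$-Segal (co)cyclic object, restricted to the open subgraphs of a fixed graph, is a cosheaf -- and then dualize to get the pullback square for $\mathcal{F}_{top}$. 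I expect this to be the main obstacle: colimits of diagrams of dg categories do not commute with arbitrary operations, so the content is precisely that $2$-Segal descent upgrades the bare union $I(\Gamma)=I(U)\cup I(V)$ into an honest homotopy pushout after applying the state sum; this is the technical core of \cite[Section~5]{Pascaleff_2019}, whose argument I would follow.

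For part~(3), the corolla $\Gamma^n$ has a single vertex $v$, which is an initial object of $I(\Gamma^n)$ -- each of the $n$ legs contributes one morphism $v\to[h]$ and there are no other non-identity morphisms -- hence $v$ is terminal in $I(\Gamma^n)^{op}$ and the colimit computing $\mathcal{F}^{top}(\Gamma^n)$ collapses to the value of $\mathcal{E}^*\circ N(\delta)$ at $v$. Since interstice duality preserves cardinality, $\delta(v)$ has $n$ elements, so this value is $\mathcal{E}^n\simeq\text{Perf}_{\mathbb{Z}/2}(A_{n-1})$. For part~(4), I would apply $\text{Fun}(-,\text{Mod}_k^{(2),\omega})$, or equivalently pass to Ind-completions and apply the continuous $\text{Fun}(-,\text{Mod}_k^{(2)})$, which turns the colimit defining $\mathcal{F}^{top}_\infty(\Gamma)$ into a limit over $I(\Gamma)$ of terms $\text{Fun}(\mathcal{E}^m,\text{Mod}_k^{(2),\omega})\simeq\text{Perf}_{\mathbb{Z}/2}(A_{m-1}^{op})\simeq\text{Perf}_{\mathbb{Z}/2}(A_{m-1})\simeq\mathcal{E}^m$, using that each $\mathcal{E}^m$ is smooth, proper, and invariant under passing to the opposite (because $A_{m-1}^{op}\cong A_{m-1}$ as quivers). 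The interstice duality $\Lambda\cong\Lambda^{op}$ identifies the transition functors of this limit with those of the dual state sum, so the limit is $\mathrm{Ind}(\mathcal{F}_{top}(\Gamma))$; restricting to compact objects gives the small statement $\mathcal{F}_{top}(\Gamma)\simeq\text{Fun}(\mathcal{F}^{top}(\Gamma),\text{Mod}_k^{(2),\omega})$ of part~(4). Finally, the forgetful functor $\pi\colon Rib^*\to Rib$ and the evaluation functor $ev\colon Rib^*\to\Lambda$ make all of these constructions mutually compatible, so that the maps of part~(1) are the ones appearing in the squares of part~(2), establishing all four properties.
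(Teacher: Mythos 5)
The paper does not prove this proposition at all -- it is quoted verbatim from \cite{Pascaleff_2019}, Section 5 (building on \cite{dyckerhoff2013triangulated, Dyckerhoff_2017}) -- so the only meaningful comparison is with those sources, and your sketch does follow their state-sum route: functoriality of $\rho_{\mathcal{E}^*}$ for open inclusions gives (1), decomposition of incidence diagrams gives (2), collapse of the colimit gives (3), and dualization of the colimit plus self-duality of $\mathcal{E}^*$ gives (4). Deferring the two genuinely technical inputs (the cosheaf criterion and the duality of the cocyclic object) to the cited works is consistent with what the paper itself does.

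Two points in your write-up need adjustment, though neither is fatal. First, in (3) your description of $I(\Gamma^n)$ contradicts the paper's definition: morphisms of the incidence category are indexed by \emph{internal} half-edges only, and every leg of the corolla is external, so $I(\Gamma^n)$ has no non-identity morphisms and the vertex is not initial. The conclusion survives for a different reason: each external edge contributes $\mathcal{E}^1\simeq\text{Perf}_{\mathbb{Z}/2}(A_0)$, the zero category, so the colimit is the coproduct of $\mathcal{E}^n$ with zero categories and still equals $\text{Perf}_{\mathbb{Z}/2}(A_{n-1})$. Second, in (2) you locate the ``technical core'' in the $2$-Segal condition, but the open-cover pushout is essentially formal once one checks (after sufficient subdivision, and paying attention to edges that get truncated to external edges of $U$, $V$, $U\cap V$) that $N(I(\Gamma))$ is the pushout of $N(I(U))$ and $N(I(V))$ along $N(I(U\cap V))$, since colimits over a pushout of index diagrams decompose; the $2$-Segal property is instead what underlies the invariance statements (Proposition 2.19/\ref{invariance}-type results) and the duality in (4). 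Relatedly, in (4) the step you assert -- that interstice duality identifies the dualized transition functors $\text{Fun}(\mathcal{E}^m,\text{Mod}_k^{(2),\omega})\simeq\mathcal{E}^m$ compatibly with the limit defining $\mathcal{F}_{top}$ -- is precisely the content of the cited duality theorem, not a routine check; as long as you are citing it (as the paper does), the argument stands, but it should be flagged as an input rather than an observation.
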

%\begin{rem}(1) The (co)descent property above origenates from Kontsevich's proposal that Fukaya category should behave like a (co)sheaf supported along the skeleton.\\
%(2) In fact all of the above properties come from the construction of the topological Fukaya cateogry, they reflect the natural functoriality of these objects.\\
%(3) It might seem unnatural to have quiver representation in our picture. This involves consideration from mirror symmetry and homological algebra. Here we give an explanation from the microlocal sheaf theory: consider $\Gamma^3$ embedded into $T^*\mathbb{R}^1$ as $\{(p,0)|p\in\mathbb{R}\}\cup\{(0,q)|q\in\mathbb{R}_{\geq0}\}$ which is a conic Lagrangian, then $\mu Sh^w_\Gamma(T^*\mathbb{R}^1)\cong \text{Perf}_{\mathbb{Z}/2}(A_{2})$. By deforming skeleton, this also works for $n>3$.
%\end{rem}

To relate to the surface topology, we have the following definition.
\begin{defi} An embedded graph $\Gamma\subset \mathcal{C}$ in a not-necessarily-compact Riemann surface is called a spine or skeleton if $\mathcal{C}$ strongly deformation retracts to $\Gamma$.
\end{defi}
Note that an embedded grpah in a Riemann surface naturally acquires a ribbon graph structure from the orientation of the ambient space. We would want to assign to the surface the topological Fukaya category of its skeleton, provided it is well defined. This is indeed the case.

\begin{prop}\label{invariance}\cite{dyckerhoff2013triangulated} The assignment $\mathcal{F}^{top}(\Gamma)$, $\mathcal{F}_{top}(\Gamma)$ and $\mathcal{F}^{top}_\infty(\Gamma)$ are invariants of the surface $\mathcal{C}$ with specified structure near infinity. In fact, for any other skeleton $\Gamma'$ of $\mathcal{C}$ with same structure near infinity as $\Gamma$, we have a canonical equivalence of categories
$$\Psi:\mathcal{F}^{top}(\Gamma)\cong\mathcal{F}^{top}(\Gamma').$$
\end{prop}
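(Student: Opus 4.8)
The plan is to prove the statement for $\mathcal{F}^{top}$; the assertions for $\mathcal{F}_{top}$ and $\mathcal{F}^{top}_\infty$ then follow formally, the former by applying the duality $\mathrm{Fun}(-,\mathrm{Mod}_k^{(2),\omega})$ of Proposition \ref{glue}(4) and the latter by applying $\mathrm{Ind}$, since both are functors and carry equivalences to equivalences. So fix $\mathcal{C}$ together with its prescribed structure near infinity (a collar of each end with a prescribed cyclic set of rays) and let $\Gamma,\Gamma'$ be two skeleta compatible with it. The first step is combinatorial: any two such skeleta are joined by a finite zig-zag of \emph{elementary moves}, namely edge subdivision/contraction and the Whitehead flip, which on a trivalent ribbon graph replaces an internal edge $e$ — together with its two endpoints, which form a square with four outgoing half-edges — by the opposite diagonal of that square. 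This is classical in the theory of fatgraphs and spines of punctured surfaces (Penner, Harer); the compatibility-at-infinity hypothesis guarantees the moves can be performed in the interior, so the collars are never disturbed, which is precisely why the hypothesis is needed (external edges genuinely affect $\mathcal{F}^{top}$, as $\mathcal{F}^{top}(\Gamma^n)\cong\mathrm{Perf}_{\mathbb{Z}/2}(A_{n-1})$ already shows).

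The second step proves invariance of $\mathcal{F}^{top}$ under each elementary move. Invariance under subdivision is immediate from the definition of the state sum $\rho_{\mathcal{E}^*}(\Gamma)=\mathrm{colim}_{I(\Gamma)^{op}}\mathcal{E}^*\circ N(\delta)$: inserting a bivalent vertex only adjoins to the indexing diagram terms equivalent to the unit $\mathbb{Z}/2$-dg category, arranged so that the colimit is unchanged — this is the remark, already recorded above, that subdivision does not affect the construction. For the flip one localizes via Proposition \ref{glue}(2): write $\Gamma=U\cup V$ with $V$ a small open neighborhood of the flipped square and $U$ its complementary open subgraph, and note that $U$ and $U\cap V$ are unchanged by the flip; the pushout square then reduces the claim to identifying $\mathcal{F}^{top}(V)$ for the two local configurations, compatibly with the corestriction from $\mathcal{F}^{top}(U\cap V)$. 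But $\mathcal{F}^{top}(V)$ is in each case the value of $\mathcal{E}^*$ on a $4$-gon equipped with one of its two triangulations, and the $2$-Segal property of $\mathcal{E}^*$ says exactly that this value is independent of the triangulation, supplying the required canonical equivalence; cyclic functoriality of $\mathcal{E}^*$ makes it compatible with the boundary restrictions.

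The third step upgrades these move-by-move equivalences to a single equivalence $\Psi:\mathcal{F}^{top}(\Gamma)\cong\mathcal{F}^{top}(\Gamma')$ that is independent of the chosen zig-zag. For this one uses that any two zig-zags of elementary moves with the same endpoints differ by the standard relations among flips — commuting disjoint flips, the involutivity relation, and the pentagon relation — i.e. that the flip complex of a punctured surface is connected and simply connected (Harer, Penner), and then that $\rho_{\mathcal{E}^*}$ sends each such relation to a contractible space of homotopies. The latter draws on the full cocyclic $2$-Segal structure of $\mathcal{E}^*$, not merely its underlying $2$-Segal object, through the coherences packaged into the $\infty$-categorical left Kan extension defining $\rho_{\mathcal{E}^*}$; concretely one wants $\Gamma\mapsto\mathcal{F}^{top}(\Gamma)$ to factor through the nerve of the (contractible) category of skeleta and elementary moves. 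I expect \textbf{this coherence step to be the main obstacle}: the individual equivalences are elementary once $2$-Segal is in hand, but organizing the pentagon and disjointness coherences into a well-defined $\Psi$ is the delicate point. A way to bypass the explicit bookkeeping — and the conceptual reason the statement is true — is to repackage the state sum as a constructible cosheaf of $\mathbb{Z}/2$-dg categories on $\mathcal{C}$ itself, locally modeled on $\mathcal{E}^*$, observe that $\mathcal{F}^{top}(\Gamma)$ computes its global sections for every skeleton $\Gamma$ because a skeleton is a strong deformation retract of $\mathcal{C}$ rel its ends and cosheaf cohomology is a homotopy invariant, and thereby obtain the canonical $\Psi$ with all higher coherences for free.
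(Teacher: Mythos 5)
The paper does not prove this proposition at all: it is imported wholesale from \cite{dyckerhoff2013triangulated}, and your outline is essentially the argument given there — reduce to $\mathcal{F}^{top}$, check invariance under edge contractions/flips locally using the pushout property and the $2$-Segal condition on $\mathcal{E}^*$, and obtain canonicity from the connectivity and simple connectivity (in fact contractibility) of the complex of spines rel the structure at infinity. You correctly identify the coherence step as the real content; that is precisely what \cite{dyckerhoff2013triangulated} handles structurally, and your proposed cosheaf-on-the-surface bypass is the route taken in later treatments, so the proposal is sound and matches the cited proof.
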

\begin{rem} 
	This ``structure near infinity'' models the stop in a \emph{partially-wrapped} Fukaya category in the sense of \cite{Ganatra_2019}. We do not give a detailed definition here -- for our purpose, we should consider the (fully) wrapped Fukaya category on our mirror curve. Thus when we talk about the topological Fukaya category of a curve $\mathcal{C}$, we define $\mathcal{F}^{top}(\mathcal{C}):=\mathcal{F}^{top}(\Gamma)$ for some \emph{compact} skeleton $\Gamma$ of the curve, which corresponds to empty structure near infinity. By the above proposition, this is a well-defined topological invariant of the curve.
\end{rem}
\begin{example} \label{wheel}  The cotangent bundle $T^*S^1$ has skeleton of the form $\Gamma(p,q)$ called wheels. Each wheel has a central circle $S^1$ embedded as the zero section and $p+q$ fibers called spokes attached to it, with $(p,q)$ counts the number of upward pointing and downward pointing conic cotangent fibers. One can compute (\cite{Pascaleff_2019} Lemma 6.3)
\begin{itemize}
	\item{$\mathcal{F}^{top}(\Gamma(p,q))\cong\text{Perf}_{\mathbb{Z}/2}(\mathbb{P}^1(p,q))$, $p,q$ both nonzero;}
	\item{$\mathcal{F}^{top}(\Gamma(n,0))\cong\text{Perf}_{\mathbb{Z}/2}([\mathbb{A}^1/\mu_n])$, $n$ nonzero;}
	\item{$\mathcal{F}^{top}(\Gamma(0,0))\cong\text{Perf}_{\mathbb{Z}/2}(\mathbb{G}_m$).}
	\end{itemize}
For further use, we record here a quiver representation model for the topological Fukaya category of wheels. Consider a family of quivers $Q(\Gamma(p,q))$ defined for each $\Gamma(p,q)$. It has vertices indexed by the closed intervals separated by spokes on the central circle of $\Gamma(p,q)$, while the arrows are indexed by the spokes. If a spoke is pointing upwards, it gives an arrow from the vertex right before it to the vertex right after it, and vice versa. There is an equivalence $$\mathcal{F}^{top}(\Gamma(p,q))=\text{Perf}_{\mathbb{Z}/2}(Q(\Gamma(p,q)))$$ 
(cf. \cite{sibilla2011ribbon} Theorem 3.18).
%Here we give a sheaf theoretical explanation using microlocal sheaf theory (). In the case where $\Gamma$ is a conic lagrangian inside cotangent bundle $T^*S^1$, $\mu Sh^w_\Gamma(T^*S^1)$  coincides wtih $\mathcal{F}^{top}(\Gamma)$ and can be computed as the compact obejcts inside $\mu Sh_\Gamma^\diamond(T^*S^1)=$ the category of sheaves with singular support contained in $\Gamma$. Applying this to $\Gamma(p,q)$, microlocal sheaf theory implies that $\mu Sh_\Gamma^\diamond(T^*S^1)$ is the category of representations over $Q(\Gamma(p,q))$ and the conclusion follows. Also note that $Q(\Gamma(p,0))$ is just the cyclic quiver. $$\{1\rightarrow2\rightarrow3\rightarrow\cdots\rightarrow p\rightarrow 1\}$$ To summarize, we have explicit equivalences between topological Fukaya category, category of perfect complexes and category of quiver representations.
\end{example}
\begin{rem}
Strictly speaking, the spokes in $\Gamma(p,q)$ can be arranged in different positions, and they produce different $Q(\Gamma(p,q))$. But according to Proposition \ref{invariance}, for skeletons with fixed structure near infinity, their topological Fukaya categories can be identified. Hence we will still call them $\Gamma(p,q)$ when it doesn't cause confusion.
\end{rem}
\begin{figure}
 \centering  \includegraphics[width=1\textwidth]{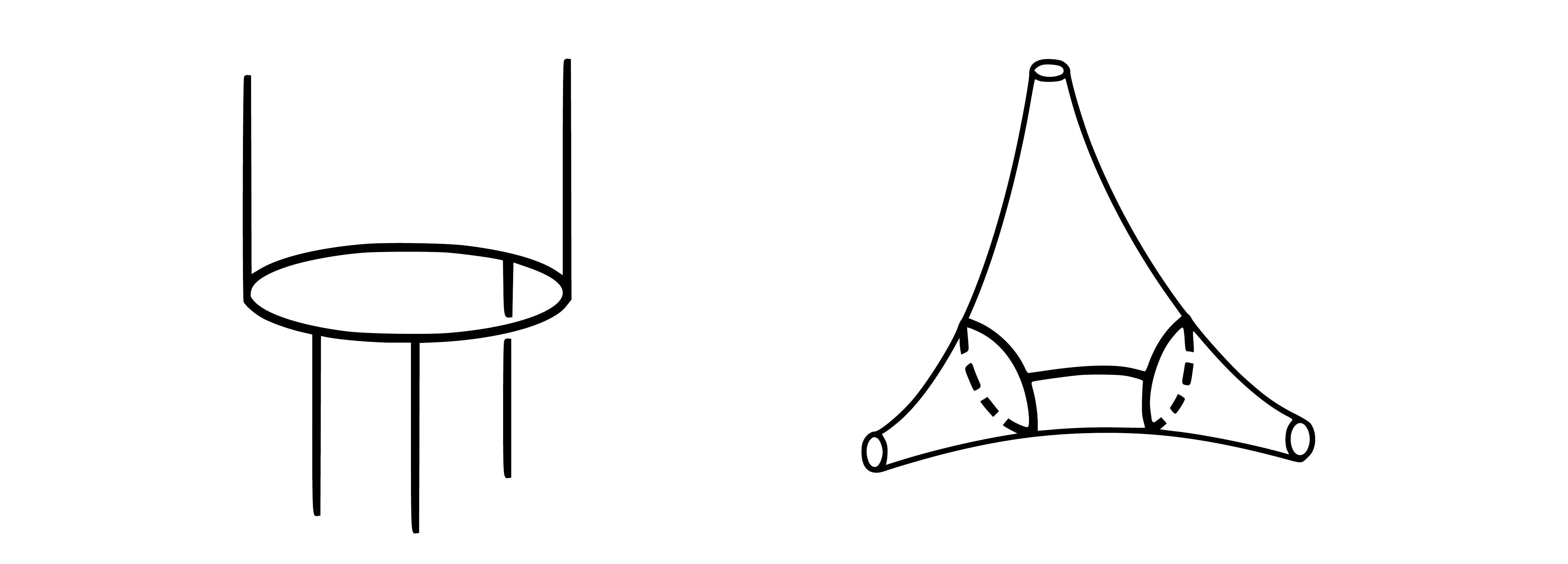} \caption{$\Gamma(2,3)$ and dumbbell} 
\end{figure}
\begin{example}\label{dumbbell} The three-punctured sphere $\mathbb{P}^1-\{0,1,\infty\}$ has a skeleton $\Gamma_D$ called dumbbell. We can decompose the dumbbell as two wheels $\Gamma(1,0)$ glued together along a common spoke. By above example we know $\mathcal{F}^{top}(\Gamma(1,0))\cong \coh_{\mathbb{Z}/2}(\mathbb{A}^1)$, and $\mathcal{F}^{top}(-)$ evaluated on a segment is by definition $\coh_{\mathbb{Z}/2}(*)$.
By cosheaf property of $\mathcal{F}^{top}(-)$ we know that $\mathcal{F}^{top}(\Gamma_D)$ is the homotopy pushout of the following diagram, with the maps being pushforward along the inclusion of origin.
$$\coh_{\mathbb{Z}/2}(\mathbb{A}^1)\leftarrow\coh_{\mathbb{Z}/2}(*)\rightarrow\coh_{\mathbb{Z}/2}(\mathbb{A}^1)$$
It can be computed to be $\coh_{\mathbb{Z}/2}(\spec k[x,y]/xy=0)$, using descent property of $\coh_{\mathbb{Z}/2}(-)$.
\end{example}

\section{Affine Case}
%\rightline{\textcolor{blue}{Should carry $\Sigma$ around???}}
In this section, we prove the mirror equivalence for $(\mathcal{X}_\sigma=[\mathbb{A}^3/G],W=z_1z_2z_3)$. We work with a fixed $\sigma$ in this section. The first step is to establish an equivariant version of the dimensional reduction equivalence.
%\rightline{\textcolor{blue}{Replace $z_3$ by $z_2$}}

For $\mathcal{X}=[\mathbb{A}^3/G]$, we define $\mathcal{Y}=[Y/G]$ 
where
\[
	Y=\AandA=\spec(k[z_1,z_2]/(z_1z_2)),
\]
and the action of $G$ on $Y\subset \mathbb A^3$ is induced from the action on $\mathbb A^3$. Let $u,v$ be morphisms
\begin{align*}
	Y=\AandA \stackrel{u}{\leftarrow} & (z_1z_2)^{-1}(0)  \stackrel{v}{\hookrightarrow} W^{-1}(0),\\
	(z_1,z_2)  \mapsfrom &(z_1,z_2,z_3)   \mapsto (z_1,z_2,z_3).
\end{align*}
The following proposition is the equivariant version of \cite[Proposition 2.3]{nadler2016wrapped}.
\begin{prop}\label{affinecomputation}
	The functor $v_*\circ u^*$ for $G$-equivariant sheaves defines an equivalence of $\mathbb Z/2$-dg categories 
    $${\coh}_{\mathbb{Z}/2}(\mathcal{Y})\cong D_{Sing}(\mathcal{X}_0).$$%\cong \text{MF}(\mathcal{X},W).$$
\end{prop}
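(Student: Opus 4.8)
The plan is to reduce the statement to the non-equivariant dimensional reduction of Isik and Shipman (following \cite{nadler2016wrapped}) and then carefully track the $G$-equivariant structure throughout. Recall that $D_{Sing}(\mathcal{X}_0)\cong \MF(\mathcal{X},W)$ by Orlov's equivalence (stated above), so it suffices to produce an equivalence $\coh_{\mathbb{Z}/2}(\mathcal{Y})\cong \MF([\mathbb{A}^3/G],z_1z_2z_3)$, or equivalently, after passing to $G$-equivariant sheaves on the honest affine schemes, an equivalence $\coh_{\mathbb{Z}/2}^G(Y)\cong D_{Sing}^G(\{z_1z_2z_3=0\})$. First I would set up the diagram: $W^{-1}(0)=\{z_1z_2z_3=0\}\subset\mathbb{A}^3$ is reducible, the hypersurface $\{z_1z_2=0\}$ is the union of two of its three coordinate hyperplanes, and $u:(z_1z_2)^{-1}(0)\subset W^{-1}(0)$ is the inclusion of $\{z_1z_2=0\}$ (as a closed subscheme of $\mathbb{A}^3$, with the free $z_3$ direction), which maps to $Y$ by forgetting $z_3$; the projection $u$ exhibits $(z_1z_2)^{-1}(0)$ as $Y\times\mathbb{A}^1_{z_3}$. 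All three maps are manifestly $G$-equivariant because $G$ acts diagonally via characters $\rho_i$ on the coordinates $z_i$ and preserves each monomial $z_iz_j$.

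Next I would recall the shape of the non-equivariant argument in \cite{nadler2016wrapped}: $W=z_1z_2z_3$ factors as $z_3\cdot(z_1z_2)$, and dimensional reduction (Isik/Shipman, or the Koszul-duality/Thom--Sebastiani packaging of \cite{preygel2011thomsebastiani}) identifies $D_{Sing}$ of the zero locus of a function of the form $g\cdot p$, where $p$ is a coordinate and $g$ pulled back from the complementary directions, with $\coh$ (or $D_{Sing}$) of the zero locus of $g$ on the reduced slice. Here $g = z_1z_2$ on $\mathbb{A}^2_{z_1,z_2}$ and its zero locus is precisely $Y=\AandA$; since $Y$ is not smooth we land in $\coh_{\mathbb{Z}/2}(Y)$ rather than $\MF$ of something, matching the statement. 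The functor realizing this is exactly $v_*\circ u^*$: pull a sheaf on $Y$ back along the projection $u$ to $Y\times\mathbb{A}^1_{z_3}=(z_1z_2)^{-1}(0)$, then push forward along the closed immersion $v$ into $W^{-1}(0)$ and pass to the Verdier quotient $D_{Sing}(\mathcal{X}_0)$. I would cite \cite{nadler2016wrapped} Proposition 2.3 for the non-equivariant equivalence and for the fact that $v_*u^*$ is the correct functor.

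The equivariant upgrade is then the heart of the matter, and it is where I expect the only real (though mild) work to lie. The clean way is to do everything on the quotient stacks: $u$ and $v$ descend to maps of stacks $\mathcal{Y}\xleftarrow{\bar u}[(z_1z_2)^{-1}(0)/G]\xhookrightarrow{\bar v}[W^{-1}(0)/G]$, and $D_{Sing}([W^{-1}(0)/G])$ is by definition $\coh_{\mathbb{Z}/2}([W^{-1}(0)/G])/\Perf_{\mathbb{Z}/2}([W^{-1}(0)/G])$. Because $G$ is finite and $k$ has characteristic zero, taking $G$-invariants is exact, so the forgetful functor to underlying complexes is conservative and $t$-exact, and $\Perf$, $\coh$, and the Verdier quotient all commute with it; concretely, $\Ext$-groups on the stack are the $G$-invariants of the $\Ext$-groups downstairs, as already recorded in the excerpt. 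Hence the chain of equivalences in the Isik--Shipman argument — each of which is an equivalence of $k$-linear dg categories induced by a $G$-equivariant functor between $G$-equivariant geometric objects — can be carried out $G$-equivariantly step by step: one checks that the relevant adjunctions (the $(u^*,u_*)$ and $(v_*,v^!)$ adjunctions, the Koszul resolution used in dimensional reduction, the identification of the kernel with perfect complexes) are all canonically $G$-equivariant, which they are since all the geometry is. So the plan concludes by running the proof of \cite[Proposition 2.3]{nadler2016wrapped} verbatim in the category of $G$-equivariant sheaves, checking at each step that the functors and natural isomorphisms used are $G$-equivariant. The main obstacle, such as it is, is purely bookkeeping: making sure the dimensional-reduction equivalence is stated (or re-proved) in a form — e.g. the Thom--Sebastiani / $S^1$-equivariant formalism of \cite{preygel2011thomsebastiani} — that is manifestly functorial in the group action, so that no ad hoc choices break $G$-equivariance.
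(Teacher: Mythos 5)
Your proposal is correct in substance but takes a genuinely different route from the paper. You reduce to the non-equivariant dimensional reduction of \cite{nadler2016wrapped} (Isik/Shipman, or Preygel's packaging) and then argue the upgrade to the quotient stack is formal, whereas the paper proves the equivariant statement directly: it exhibits generators $\mathcal{O}_{\mathbb{A}^1_j}(\theta)$ of $\coh(\mathcal{Y})$ and $\mathcal{O}_{\mathbb{A}^2_j}(\theta)$, $j=1,2$, of $D_{Sing}(\mathcal{X}_0)$ (with $\mathcal{O}_{\mathbb{A}^2_3}(\theta)$ a cone on $u_1(\theta)$), writes down explicit equivariant resolutions (infinite on the $\mathcal Y$ side, $2$-periodic on the $\mathcal X_0$ side), and computes both Ext algebras together with their $G$-weights ($gu_j=\rho_j^{-1}(g)u_j$, $gv=\rho_3(g)v$), checking that $v_*u^*$ is a quasi-isomorphism on the mapping complexes between generators. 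The trade-off: your route is shorter modulo the cited theorem, but the ``bookkeeping'' hides the two places where a real argument is needed: (i) identifying $D_{Sing}([W^{-1}(0)/G])$ with the $G$-equivariant objects of $D_{Sing}(W^{-1}(0))$, i.e.\ commuting the Verdier quotient $\coh/\text{Perf}$ with passing to equivariant objects, and (ii) essential surjectivity (generation) in the equivariant category; both do hold here, but via the averaging argument with $|G|$ invertible (every equivariant object is a summand of the induction of its underlying object), not merely because ``the forgetful functor is conservative and invariants are exact.'' If instead you follow your fallback plan and rerun the proof equivariantly step by step, you end up performing essentially the computation the paper does. One further point worth noting: the paper's explicit computation is not just a proof device --- the labeled generators $\mathcal{O}_{\mathbb{A}^1_j}(\theta)$ and the $G$-weights on $u_j$, $v$ are exactly what the next proposition uses to match the quiver model of $\mathcal{F}^{top}(\Gamma)$, data which your more formal argument would not by itself supply.
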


\begin{proof}
We will explicitly find a set of generators on both sides and then identify their mapping spaces. By generators of $C$ we mean a subset of objects $S$ such that the smallest stable subcategory  containing $S$ and closed under taking retraction is $C$. The category $\coh(\mathcal{Y})$ is generated by $\{\mathcal{O}_{\mathbb{A}^1_j}(\theta)\}_{j=1,2}$, where $$\mathcal{O}_{\mathbb{A}^1_j}=k[z_1,z_2]/z_j$$ is viewed as a coherent sheaf carrying the induced $G$-linearization. Under the functor $v_*\circ u^*$, they go to $\{\mathcal{O}_{\mathbb{A}^2_j}(\theta)\}_{j=1,2}$ in $D_{Sing}([W^{-1}(0)/G])$, where $$\mathcal{O}_{\mathbb{A}^2_j}:=k[z_1,z_2,z_3]/z_j.$$ In fact, the objects $\{\mathcal{O}_{\mathbb{A}^2_j}(\theta)\}_{j=1,2}$ generate the category of singularity. Note that $\{\mathcal{O}_{\mathbb{A}^2_j}(\theta)\}_{j=1,2,3}$ generate $\coh(\mathcal{X}_0)$, and we will see below that $\{\mathcal{O}_{\mathbb{A}^2_3}(\theta)\}$ are generated by $\{\mathcal{O}_{\mathbb{A}^2_j}(\theta)\}_{j=1,2}$. 

To identify the corresponding mapping complexes, we notice that in $\coh(\mathcal{Y})$, $\mathcal{O}_{\mathbb{A}^1_j}(\theta)$ is resolved by
%(Notational remark: Note $j\in\{1,3\}$, hence $4-j$ is just picked for convinience of numbering.)\\
\begin{center}
\begin{tikzcd}\cdots\arrow{r}&\mathcal{O}_Y(\rho_j^2\rho_{3-j}\theta)\arrow{r}{z_j}&\mathcal{O}_Y(\rho_j\rho_{3-j}\theta)\arrow{r}{z_{3-j}}&\mathcal{O}_Y(\rho_j\theta)\arrow{r}{z_j}&\mathcal{O}_Y(\theta).
\end{tikzcd}
\end{center}
This resolution extends infinitely to the left, with $(-2n)$-th term given by $\mathcal{O}_Y(\rho_j^n\rho_{3-j}^n\theta)$ and $(-2n-1)$-th term given by $\mathcal{O}_Y(\rho_j^{n+1}\rho_{3-j}^n\theta)$. We carry along the $G$-linearization all the time to compute the $G$ action on $\ext_Y^n$
\begin{align*}
\ext^*_\mathcal{Y}(\mathcal{O}_{\mathbb{A}^1_j}(\theta),\mathcal{O}_{\mathbb{A}^1_j}(\theta'))=\ext^*_Y(\mathcal{O}_{\mathbb{A}^1_j}(\theta),\mathcal{O}_{\mathbb{A}^1_j}(\theta'))^G=k[z_{3-j},v]/(z_{3-j}v=0)(\theta^{-1}\theta')^G,\\
\ext^*_\mathcal{Y}(\mathcal{O}_{\mathbb{A}^1_j}(\theta),\mathcal{O}_{\mathbb{A}^1_{3-j}}(\theta'))=\ext^*_Y(\mathcal{O}_{\mathbb{A}^1_j}(\theta),\mathcal{O}_{\mathbb{A}^1_{3-j}}(\theta'))^G=u_j\cdot k[v](\theta^{-1}\theta')^G,
\end{align*}
in which the degree $|v|=2$, and $|u_j|=1$. Using the identity $\rho_1\rho_2\rho_3=1$, the $G$-actions on $u_j,v$ are given by $gv=\rho_3(g)v$, while $gu_j=\rho_j^{-1}(g)u_j$. The compositions of these Ext-groups are self-evident, noting that $u_1u_2=u_2u_1=v$. 

On the other hand, we want to compute corresponding mapping complexes in $D_{Sing}(\mathcal X_0)=\MF(\mathcal{X},W)$. Proposition 1.21 of \cite{Or04} allows us to  to compute the underlying $\ext$-groups: for $N>2$, we have
$$H^0\Hom_{D_{Sing}}(\mathcal{O}_{\mathbb{A}^2_j}(\theta),\mathcal{O}_{\mathbb{A}^2_{j'}}(\theta')[N])=\ext^N_{X_0}(\mathcal{O}_{\mathbb{A}^2_j}(\theta),\mathcal{O}_{\mathbb{A}^2_{j'}}(\theta'))^G.$$
The sheaf $\mathcal{O}_{\mathbb{A}^2_j}(\theta)$ on $\mathcal{X}_0$ has a stable $2$-periodic resolution
\begin{center}
\begin{tikzcd}\cdots\arrow{r}&\mathcal{O}_{X_0}(\rho_j\theta)\arrow{r}{z_j}&\mathcal{O}_{X_0}(\theta)\arrow{r}{z_3z_{3-j}}&\mathcal{O}_{X_0}(\rho_j\theta)\arrow{r}{z_j}&\mathcal{O}_{X_0}(\theta).
\end{tikzcd}
\end{center}
We can compute its cohomology as graded modules:
\begin{align*} 
H^{\text{even}}\Hom(\mathcal{O}_{\mathbb{A}^2_j}(\theta),\mathcal{O}_{\mathbb{A}^2_j}(\theta'))&=k[z_{3-j},z_3]/(z_{3-j}z_3)(\theta^{-1}\theta')^G,\\
H^{\text{odd}}\Hom(\mathcal{O}_{\mathbb{A}^2_j}(\theta),\mathcal{O}_{\mathbb{A}^2_j}(\theta'))&=0,\\
H^{\text{even}}\Hom(\mathcal{O}_{\mathbb{A}^2_j}(\theta),\mathcal{O}_{\mathbb{A}^2_{3-j}}(\theta'))&=0,\\
H^{\text{odd}}\Hom(\mathcal{O}_{\mathbb{A}^2_j}(\theta),\mathcal{O}_{\mathbb{A}^2_{3-j}}(\theta'))&=u_j\cdot k[z_3](\theta^{-1}\theta')^G.
\end{align*}
Note that ${\mathcal O}_{{\mathbb A}^2_3}(\theta)$ is isomorphic to $\mathrm{Cone}(u_1(\theta)^G)$, and thus $\{\mathcal{O}_{\mathbb{A}^2_3}(\theta)\}$ are generated by $\{\mathcal{O}_{\mathbb{A}^2_j}(\theta)\}_{j=1,2}$. 

The group $G$ acts on $u_j$ by $G$ via $g u_j=\rho_j^{-1}(g)u_j$, and $u_1 u_2=u_2 u_1=z_3$. Hence we have defined a functor $\coh(\mathcal{Y})\rightarrow\text{MF}(\mathcal{X},W)$. On the cohomology of the mapping complexes it takes $z_i$ to $z_i$, $u_i$ to $u_i$ and $v$ to $z_3$. It descends to a functor from $\mathbb{Z}/2$-folding of $\coh(\mathcal{Y})$, which is a quasi-isomorphism on the mapping complexes.
\end{proof}
\begin{rem}
	By breaking the symmetry in $\mathbb A^3$ we have effectively chosen preferred coordinates for the B-model computation. A different choice of such preferred coordinates corresponds to a different skeleton on the mirror curve.
\end{rem}
To compare $\coh_{\mathbb Z/2}(\mathcal Y)$ to the Fukaya category of the mirror curve, we use the following decomposition of the category.
\begin{prop}
The curve $\mathcal{Y}$ fits into a pushout diagram where $[*/G]$ embeds canonically as the origin
\begin{center}
\begin{tikzcd}
\text{[}*/G\text{]}\arrow{r}\arrow{d} & \text{[}\mathbb{A}^1_1/G\text{]}\arrow{d}\\
\text{[}\mathbb{A}^1_2/G\text{]}\arrow{r} & \mathcal{Y}.
\end{tikzcd}
\end{center}
After taking $\coh_{\mathbb{Z}/2}(-)$ this is turned into a pushout in $\text{DGCat}^{(2)}_{\text{small}}$
\begin{center}
\begin{tikzcd}
\coh_{\mathbb{Z}/2}(\text{[}*/G\text{]}\arrow{r}\arrow{d}) & \coh_{\mathbb{Z}/2}(\text{[}\mathbb{A}^1_1/G\text{]}\arrow{d})\\
\coh_{\mathbb{Z}/2}(\text{[}\mathbb{A}^1_2/G\text{]}\arrow{r}) & \coh_{\mathbb{Z}/2}(\mathcal{Y}).
\end{tikzcd}
\end{center}
\end{prop}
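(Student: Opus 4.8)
The plan is to exhibit $\mathcal{Y}=[Y/G]$ as a pushout of stacks and then invoke Zariski (or really, closed-cover) descent for $\coh_{\mathbb{Z}/2}$ to transport this to a pushout of categories. First I would observe that $Y=\spec(k[z_1,z_2]/(z_1z_2))$ is the scheme-theoretic union of the two coordinate axes $\mathbb{A}^1_1=\spec(k[z_1,z_2]/(z_2))$ and $\mathbb{A}^1_2=\spec(k[z_1,z_2]/(z_1))$, glued along their common closed point, the origin $*=\spec k$. This is a pushout in the category of (affine) schemes: $k[z_1,z_2]/(z_1z_2)$ is the fiber product $k[z_1]\times_k k[z_2]$ (a pullback of rings), which is exactly the statement that $Y$ is the pushout $\mathbb{A}^1_1\sqcup_* \mathbb{A}^1_2$ in schemes. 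Since the $G$-action on $\mathbb{A}^3$ restricts to $Y$ and preserves each axis (indeed $G$ acts diagonally via the characters $\rho_i$, so it fixes the ideals $(z_1)$ and $(z_2)$) as well as the origin, the diagram of $G$-schemes is $G$-equivariantly a pushout, hence descends to a pushout of quotient stacks $\mathcal{Y}=[\mathbb{A}^1_1/G]\sqcup_{[*/G]}[\mathbb{A}^1_2/G]$. All four maps are closed immersions.

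Next I would pass to categories of coherent sheaves. The key input is that $\coh_{\mathbb{Z}/2}(-)$ (equivalently, the bounded derived category of equivariant coherent complexes) satisfies descent for this kind of closed cover: pushforward along a closed immersion is exact and fully faithful on suitable subcategories, and for a scheme written as the union $Y=Y_1\cup Y_2$ of two closed subschemes glued along $Y_{12}=Y_1\cap Y_2$, the square of pushforwards
\[
\begin{tikzcd}
\coh_{\mathbb{Z}/2}(Y_{12})\arrow{r}\arrow{d} & \coh_{\mathbb{Z}/2}(Y_1)\arrow{d}\\
\coh_{\mathbb{Z}/2}(Y_2)\arrow{r} & \coh_{\mathbb{Z}/2}(Y)
\end{tikzcd}
\]
is a pushout in $\text{DGCat}^{(2)}_{\text{small}}$. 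Concretely, $\coh_{\mathbb{Z}/2}(Y)$ is generated by the images of $\coh_{\mathbb{Z}/2}(Y_1)$ and $\coh_{\mathbb{Z}/2}(Y_2)$ under pushforward — for instance the structure sheaves $\mathcal{O}_{Y_1},\mathcal{O}_{Y_2}$ and their twists generate — and the relations are controlled by the fiber product description of the structure sheaves; this is the content of the descent statement used in Example \ref{dumbbell}. I would simply apply this with $Y_1=\mathbb{A}^1_1$, $Y_2=\mathbb{A}^1_2$, $Y_{12}=*$, carrying the $G$-linearizations throughout (equivalently, working $G$-equivariantly, i.e. on the quotient stacks), which is harmless since $G$ is finite and taking $G$-invariants is exact over a field of characteristic zero.

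The main obstacle is justifying that $\coh_{\mathbb{Z}/2}$ — rather than $\text{Perf}_{\mathbb{Z}/2}$ or $\text{QCoh}_{\mathbb{Z}/2}$ — satisfies this closed-cover descent, since $Y$ is singular and bounded coherent complexes do not automatically glue the way quasicoherent complexes do; one has to check that the homotopy pushout of the diagram of small categories actually computes $\coh_{\mathbb{Z}/2}(\mathcal{Y})$ and not some larger idempotent-completion issue, and that no higher terms are needed (which holds here because triple intersections are trivial, cf. Remark \ref{cech}(1), so the \v{C}ech diagram degenerates to a pushout/equalizer). The cleanest route is to cite the analogous descent property of $\coh_{\mathbb{Z}/2}(-)$ already invoked in Example \ref{dumbbell} — where precisely $\coh_{\mathbb{Z}/2}(\spec k[x,y]/xy=0)$ was computed as the homotopy pushout of $\coh_{\mathbb{Z}/2}(\mathbb{A}^1)\leftarrow\coh_{\mathbb{Z}/2}(*)\rightarrow\coh_{\mathbb{Z}/2}(\mathbb{A}^1)$ — and observe that adding the $G$-linearization (i.e. replacing schemes by the quotient stacks $[-/G]$) does not affect the argument, since pushforward along the finite étale-locally-trivial maps $X\to[X/G]$ is compatible with all the functors in sight and descent along $BG$ is automatic for a finite group. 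This yields the stated pushout square.
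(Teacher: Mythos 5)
Your proposal is correct and follows essentially the same route as the paper: the stack-level pushout is elementary (the ring fiber product $k[z_1]\times_k k[z_2]\cong k[z_1,z_2]/(z_1z_2)$, with the $G$-action carried along), and the categorical statement is reduced to the known closed-gluing descent for bounded coherent complexes. The paper simply makes the citation precise — it invokes \cite[Vol.2 Part II Chapter 8 Theorem A.1.2]{gaitsgory2017study} as explained in \cite[Corollary 2.5]{nadler2016wrapped} — which is exactly the ``descent property of $\coh_{\mathbb{Z}/2}$'' you gesture at via Example \ref{dumbbell}, so your only soft spot (justifying that descent for $\coh$ rather than $\mathrm{Perf}$ or $\mathrm{QCoh}$) is covered by that reference.
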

\begin{proof}
The first claim is obvious. The second claim is a consequence of \cite[Vol.2 Part II Chapter 8 Theorem A.1.2]{gaitsgory2017study}, as explained in \cite[Corollary 2.5]{nadler2016wrapped}.
\end{proof}
For the mirror curve $\mathcal{C}$, we construct an explicit skeleton from the equivariant structure and use it to compute the topological Fukaya category.
%\begin{prop}
%The mirror curve $\mathcal{C}$ has a skeleton of the form as in Figure \ref{skeleton}. 
%\end{prop}

\begin{figure}
 \centering  \includegraphics[width=1\textwidth]{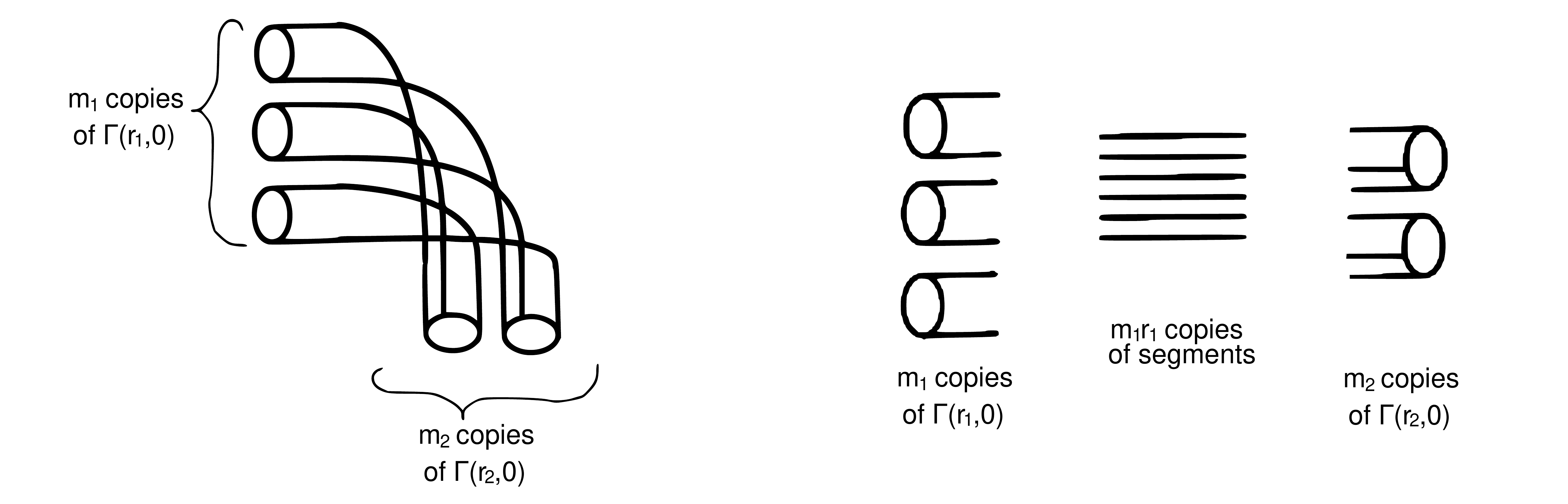} \caption{\label{skeleton}Skeleton and decomposition} 
\end{figure}
%\rightline{\textcolor{blue}{Use PS to get a better figure?}}
The the quotient of $\mathcal{C}$ under the $G^\vee$ action is a three-punctured sphere (Proposition \ref{quot}). We pick a dumbbell-shaped skeleton with two circles placed near the two punctures $X=0$ and $Y=0$, connected along the real line from the circle near $X=0, Y=-1$ to the circle near $X=-1, Y=0$ in the curve $1+X+Y=0$ (see Remark \ref{rmk:quotient}). The pullback of this skeleton is in Figure \ref{skeleton}.

We may decompose the skeleton $\Gamma$ as in Figure \ref{skeleton} and label them $\Gamma_i$ for $i=1,2,3$ from left to right. It is obvious that $\Gamma$ is covered by open subskeletons $\Gamma_1$ and $\Gamma_3$ with $\Gamma_1\cap\Gamma_3=\Gamma_2$. By cosheaf property of $\mathcal{F}^{top}(-)$, we get a decomposition of topological Fukaya category as a pushout
\begin{center}
\begin{tikzcd}
\mathcal{F}^{top}(\Gamma_2)\arrow{r}\arrow{d} & \mathcal{F}^{top}(\Gamma_1)\arrow{d}\\
\mathcal{F}^{top}(\Gamma_3)\arrow{r}    & \mathcal{F}^{top}(\Gamma).
\end{tikzcd}
\end{center}

\begin{prop}
There is a commutative diagram in $\text{DGCat}^{(2)}_{\text{small}}$ with vertical maps being equivalences
\begin{center}
%\begin{tikzcd}
%F(\Gamma_2)\arrow{r}\arrow{d}{\phi_2} & F(\Gamma_1)\arrow{d}{\phi_1}\\
%\coh_{\mathbb{Z}/2}(\text{[}*/G\text{]}\arrow{r}) & \coh_{\mathbb{Z}/2}(\text{[}\mathbb{A}^1_1/G\text{]})
%\end{tikzcd}
\begin{tikzcd}
\mathcal{F}^{top}(\Gamma_1)\arrow{d}{\cong}&\mathcal{F}^{top}(\Gamma_2)\arrow{r}\arrow{l}\arrow{d}{\cong} & \mathcal{F}^{top}(\Gamma_3)\arrow{d}{\cong}\\
\coh_{\mathbb{Z}/2}(\text{[}\mathbb{A}^1_2/G\text{]})&\coh_{\mathbb{Z}/2}(\text{[}*/G\text{]}\arrow{l}\arrow{r}) & \coh_{\mathbb{Z}/2}(\text{[}\mathbb{A}^1_1/G\text{]}).
\end{tikzcd}
\end{center}
\end{prop}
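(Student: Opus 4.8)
The plan is to realise the three ribbon graphs $\Gamma_1,\Gamma_2,\Gamma_3$ as the preimages, under the $G^\vee$-covering $p\colon\mathcal C_\sigma\to\mathbb P^1-\{0,1,\infty\}$ of Proposition~\ref{quot}, of the pieces of the dumbbell skeleton $\Gamma_D$ used in Example~\ref{dumbbell}. Write $\Gamma_D=\Gamma_{1,D}\cup\Gamma_{3,D}$ with $\Gamma_{1,D},\Gamma_{3,D}\cong\Gamma(1,0)$ the two open sub-wheels and $\Gamma_{2,D}=\Gamma_{1,D}\cap\Gamma_{3,D}$ the connecting segment. Since $p$ is a covering, the strong deformation retraction of $\mathbb P^1-\{0,1,\infty\}$ onto $\Gamma_D$ lifts (homotopy lifting, plus uniqueness of path lifting to see the lift fixes $p^{-1}(\Gamma_D)$) to one of $\mathcal C_\sigma$ onto $\Gamma:=p^{-1}(\Gamma_D)$, which is the skeleton of Figure~\ref{skeleton}; and $\Gamma_i=p^{-1}(\Gamma_{i,D})$, so $\Gamma=\Gamma_1\cup\Gamma_3$ with $\Gamma_1\cap\Gamma_3=\Gamma_2$ as required. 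The proposition then reduces to two tasks: (a) identify each $\Gamma_i$ explicitly as a disjoint union of standard wheels $\Gamma(n,0)$ (resp.\ of segments); (b) match the corestriction maps $\mathcal F^{top}(\Gamma_2)\to\mathcal F^{top}(\Gamma_i)$ with the equivariant pushforwards along the origin. Task (b) will be reduced component by component to the non-equivariant computation already done downstairs in Example~\ref{dumbbell}.

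For (a), I would use the ramification analysis in the proof of Proposition~\ref{quot}: the puncture cut out by the toric divisor $E_i$ has $m_i$ preimages in $\overline{\mathcal C}_\sigma$, each with ramification index $r_i$ (with $G^\vee$ transitive on them, stabiliser of order $r_i$). Hence the boundary circle of $\Gamma_{i,D}$ lifts to $m_i$ circles, each an $r_i$-fold cover carrying the $r_i$ preimages of the single spoke; therefore $\Gamma_1\cong\bigsqcup^{m_1}\Gamma(r_1,0)$ and $\Gamma_3\cong\bigsqcup^{m_2}\Gamma(r_2,0)$ (the indices being those of the weight $\rho_1$ of $z_1$ and the weight $\rho_2$ of $z_2$), while $\Gamma_{2,D}$ meets no puncture so $\Gamma_2$ is a disjoint union of $|G^\vee|$ segments. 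Applying Example~\ref{wheel}, Proposition~\ref{glue}(3), and the fact that $\mathcal F^{top}$ sends a disjoint union of graphs to a direct sum of categories (cosheaf property with empty intersections, cf.\ Remark~\ref{cech}), I get $\mathcal F^{top}(\Gamma_1)\cong\bigoplus^{m_1}\text{Perf}_{\mathbb Z/2}([\mathbb A^1/\mu_{r_1}])$, $\mathcal F^{top}(\Gamma_3)\cong\bigoplus^{m_2}\text{Perf}_{\mathbb Z/2}([\mathbb A^1/\mu_{r_2}])$, and $\mathcal F^{top}(\Gamma_2)\cong\bigoplus^{|G^\vee|}\coh_{\mathbb Z/2}(*)$.

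On the B-side I would run the matching algebraic decomposition: $G$ is finite abelian and acts on $\mathbb A^1_2=\spec k[z_1]$ through $\rho_1\colon G\twoheadrightarrow\mu_{r_1}$ with kernel $\mu_{m_1}$; decomposing a $G$-equivariant sheaf into $\mu_{m_1}$-isotypic components and using $H^2(\mu_{r_1};\mathbb C^\times)=0$ to untwist the resulting projective $\mu_{r_1}$-equivariant categories gives $\coh_{\mathbb Z/2}([\mathbb A^1_2/G])\cong\bigoplus_{\mu_{m_1}^\vee}\text{Perf}_{\mathbb Z/2}([\mathbb A^1/\mu_{r_1}])$, and likewise $\coh_{\mathbb Z/2}([\mathbb A^1_1/G])\cong\bigoplus_{\mu_{m_2}^\vee}\text{Perf}_{\mathbb Z/2}([\mathbb A^1/\mu_{r_2}])$ and $\coh_{\mathbb Z/2}([*/G])=\mathrm{Rep}_{\mathbb Z/2}(G)\cong\bigoplus_{G^\vee}\coh_{\mathbb Z/2}(*)$. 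The index sets match: the $m_i$ lifted circles form a $G^\vee$-torsor with stabiliser $\cong\mu_{r_i}^\vee$, i.e.\ a $\mu_{m_i}^\vee$-torsor, and the $|G^\vee|$ segments a $G^\vee$-torsor; this gives the three vertical equivalences. For commutativity, a component of $\Gamma_2$ lies on one of the $r_i$ spokes of a component $\Gamma(r_i,0)$ of $\Gamma_i$, so by Example~\ref{dumbbell} the restriction of the corestriction to these components is the pushforward $\coh_{\mathbb Z/2}(*)\to\coh_{\mathbb Z/2}(\mathbb A^1)$, $k\mapsto\mathcal O_0$, along the origin; since the rotational $\mu_{r_i}$-symmetry of $\Gamma(r_i,0)$ cyclically permutes its $r_i$ spokes and corresponds, under $\mathcal F^{top}(\Gamma(r_i,0))\cong\text{Perf}_{\mathbb Z/2}([\mathbb A^1/\mu_{r_i}])$ (quiver model of Example~\ref{wheel}), to tensoring with the characters of $\mu_{r_i}$, the component-wise map $\bigoplus_{\psi\in\mu_{r_i}^\vee}\coh_{\mathbb Z/2}(*)\to\text{Perf}_{\mathbb Z/2}([\mathbb A^1/\mu_{r_i}])$ sends the $\psi$-summand to the twisted skyscraper $\mathcal O_0(\psi)$, i.e.\ it is exactly $i_*$ on $\mathrm{Rep}_{\mathbb Z/2}(\mu_{r_i})$. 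Assembling over all $m_i$ components recovers the equivariant pushforward $\coh_{\mathbb Z/2}([*/G])\to\coh_{\mathbb Z/2}([\mathbb A^1_j/G])$, so the span commutes.

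The main obstacle I anticipate is the precise bookkeeping binding the last two steps together: verifying that the ramification profile of $p$ over each puncture genuinely matches the kernel/image pair $(m_i,r_i)$ of the affine chart, so that the two block decompositions are indexed compatibly (and, for use in the global case, $G^\vee$-equivariantly via character-twisting on one side and permuting circles/rotating on the other); and — the essential input — that the geometric rotation of a wheel $\Gamma(r_i,0)$ is intertwined by $\mathcal F^{top}(\Gamma(r_i,0))\cong\text{Perf}_{\mathbb Z/2}([\mathbb A^1/\mu_{r_i}])$ with the autoequivalences given by tensoring with $\mu_{r_i}$-characters. This compatibility is the local shadow of the Fourier/Koszul-type duality underlying the whole mirror correspondence, and it is what forces the corestriction maps to land in the correctly twisted skyscrapers.
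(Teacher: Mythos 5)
Your proposal follows essentially the same route as the paper: lift the dumbbell skeleton along the $G^\vee$-cover, identify $\Gamma_1,\Gamma_3$ as disjoint unions of $m_i$ wheels $\Gamma(r_i,0)$ and $\Gamma_2$ as $|G^\vee|$ segments, use the wheel/quiver computation of Example \ref{wheel} for the vertical equivalences, and reduce commutativity by $G^\vee$-equivariance to the rank-one computation of Example \ref{dumbbell}. The compatibility you flag as the essential unverified input --- that rotating a wheel is intertwined with twisting by characters of $\mu_{r_i}$ --- is exactly what the paper's proof builds in from the start, by labeling the intervals of each wheel by $G^\vee$ and defining the vertical equivalence directly on the labeled quiver $Q(\Gamma(r_i,0))$, sending $M$ to the representation $\theta\mapsto M(\theta)^G$ with arrows acting by multiplication by $z_j$, so that your anticipated obstacle dissolves into the definition of the functor rather than requiring a separate lemma.
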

\begin{proof} 
	We make use of the $G^\vee$-equivariant structure to produce the equivalence. Fix an arbitrary lift of dumbbell figure $\Gamma_D$ inside the skeleton $\Gamma$. We will use it to label the skeleton. For  $\Gamma_1=\cup_{m_1} \Gamma(r_1,0)$, there is a spoke picked out by the dumbbell figure and we label the closed interval on the central $S^1$ before it by $1\in G^\vee$. Then each closed interval is labeled by $\theta\in G^\vee$ via the $G^\vee$ action. The equivalence $\mathcal{F}^{top}(\Gamma_1)\cong\text{Perf}_{\mathbb{Z}/2}(\oplus_{m_1} Q(\Gamma(r_1,0)))$ in Example \ref{wheel} labels the quiver $\oplus_{m_1}Q(\Gamma(r_1,0))$ as follows: the vertices of $\oplus_{m_1} Q(\Gamma(r_1,0))$ are labeled by the closed intervals on the circles of $\Gamma_1$ and the arrows are labeled by the spokes. On the other hand, we can produce an equivalence from $\coh_{\mathbb{Z}/2}([\mathbb{A}^1_2/G])$ to $\text{Perf}_{\mathbb{Z}/2}(\oplus_{m_1} Q(\Gamma((r_1,0)))$, taking $M$ to a quiver representation whose value on the vertex $\theta$ is $M(\theta)^G$ and the arrows between them are multiplication by $z_1$. Note that $\rho_1$ acts on $\Gamma_1$ by rotation inside each wheel, so our construction makes sense. The case for $\Gamma_3$ is similar.

	For $\Gamma_2$ it is even easier: since $\Gamma_2$ is a disjoint union of intervals, we've picked out a segment labeled by $1$ from the dumbell $\Gamma_D$ and the others are labeled $\theta$. Therefore $\mathcal{F}^{top}(\Gamma_2)$ is just representation over discrete points and we can use the segments to label them. An equivalence between this representation and $\coh_{\mathbb{Z}/2}([*/G])$ is produced as before. The commutativity of the diagram is easy now: by equivariance, we only need to check locally at the base point labeled by $1$, this reduces to the computaion in the Example \ref{dumbbell}.
\end{proof}
\begin{rem}
	For each $\mathcal X_\sigma$ and a $2$-cone $\tau\in \sigma$, we may choose $\Gamma_D$ so that the labeling of the circles in $\Gamma_1$ matches the labeling of $\widetilde{\mathcal C_\tau}$; while the labeling of the circles in $\Gamma_2$ matches the labeling $\widetilde{\mathcal C_\tau'}$ where the $2$-cone $\tau'\in \sigma$ is counter-clockwisely adjacent to $\tau$ (c.f. Section \ref{sec:mirror-curve} ).
\end{rem}
\begin{cor} The matrix factorization category MF$(\mathcal{X}_\sigma,W)$ and the topological Fukaya category of its mirror curve $\mathcal{F}^{top}(\Gamma)$ are equivalent as $\mathbb{Z}/2$-dg categories.
\end{cor}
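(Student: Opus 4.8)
The plan is to glue together the three preceding propositions, since between them they already present $\MF(\mathcal X_\sigma,W)$ and $\mathcal F^{top}(\Gamma)$ as homotopy pushouts of diagrams that will be identified termwise. First I would rewrite Proposition \ref{affinecomputation} on the B-side: Orlov's equivalence $\MF(\mathcal X_\sigma,W)\cong D_{Sing}(\mathcal X_0)$ (the proposition attributed to \cite{Or04}, which passes to dg enhancements and to the equivariant setting) composed with the equivalence $v_*\circ u^*$ yields an equivalence of $\mathbb Z/2$-dg categories $\MF(\mathcal X_\sigma,W)\cong \coh_{\mathbb Z/2}(\mathcal Y)$. Thus it suffices to prove $\coh_{\mathbb Z/2}(\mathcal Y)\cong\mathcal F^{top}(\Gamma)$.

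Second, I would recall the two pushout presentations. On the algebraic side, the decomposition of $\mathcal Y$ expresses $\coh_{\mathbb Z/2}(\mathcal Y)$ as the homotopy pushout in $\text{DGCat}^{(2)}_{\text{small}}$ of
$$\coh_{\mathbb Z/2}([\mathbb A^1_1/G])\longleftarrow \coh_{\mathbb Z/2}([*/G])\longrightarrow \coh_{\mathbb Z/2}([\mathbb A^1_2/G]).$$
On the symplectic side, the cosheaf property of $\mathcal F^{top}(-)$ (Proposition \ref{glue}(2)) applied to the open cover $\Gamma=\Gamma_1\cup\Gamma_3$ with $\Gamma_1\cap\Gamma_3=\Gamma_2$ presents $\mathcal F^{top}(\Gamma)$ as the homotopy pushout of
$$\mathcal F^{top}(\Gamma_1)\longleftarrow \mathcal F^{top}(\Gamma_2)\longrightarrow \mathcal F^{top}(\Gamma_3),$$
again in $\text{DGCat}^{(2)}_{\text{small}}$. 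The Proposition supplying the commutative diagram with vertical equivalences then identifies these two spans: it gives a morphism of diagrams that is an equivalence on each of the three vertices and commutes with the structure maps. Since the homotopy pushout is functorial in the diagram and carries levelwise equivalences to equivalences, I obtain $\coh_{\mathbb Z/2}(\mathcal Y)\cong \mathcal F^{top}(\Gamma)$, and composing with the first step gives the corollary.

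The step I expect to carry the real content — and the one the cited propositions are designed to handle — is the compatibility in the comparison of spans: one needs not merely that the three vertical arrows are equivalences but that the resulting square of spans commutes coherently. As indicated in the proof of that proposition, equivariance reduces this to a check in a neighbourhood of the base point labelled by $1\in G^\vee$, where it becomes precisely the dumbbell pushout computation of Example \ref{dumbbell}; granting that local check, invariance of homotopy colimits under equivalences of diagrams closes the argument. I would also remark that $\Gamma$, being a lift of a \emph{compact} dumbbell skeleton, has empty structure near infinity, so $\mathcal F^{top}(\Gamma)$ is the correct model for the (fully) wrapped invariant and Proposition \ref{invariance} ensures the statement is independent of the chosen lift of the dumbbell.
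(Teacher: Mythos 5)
Your proposal is correct and follows essentially the same route as the paper: the paper's proof likewise reduces to Proposition \ref{affinecomputation} (via Orlov's equivalence $\MF(\mathcal X_\sigma,W)\cong D_{Sing}(\mathcal X_0)$), and then observes that $\coh_{\mathbb Z/2}(\mathcal Y)$ and $\mathcal F^{top}(\Gamma)$ are homotopy pushouts of the two spans identified termwise by the preceding comparison proposition. Your additional remarks on the coherence of the span comparison and the compactness of the lifted dumbbell skeleton are exactly the points the paper delegates to that proposition and to Proposition \ref{invariance}, so nothing further is needed.
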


\begin{proof}
By above proposition, they are homotopy pushouts of isomorphic diagrams.
\end{proof}

\section{General Case}
We first recall some results of \cite{Pascaleff_2019}.
\subsection{Exceptional restriction functor}
We have defined restriction and corestriction functors for versions of topological Fukaya category. Now we turn to its Ind completion. It turns out that $\mathcal{F}^{top}_\infty$ enjoys even more functoriality.

Recall that given a open subgraph $U\subset \Gamma$, we have corestriction functor $C_U:\mathcal{F}^{top}(U)\rightarrow\mathcal{F}^{top}(\Gamma)$. Taking Ind completion gives a  functor $C_{\infty,U}:\mathcal{F}_\infty^{top}(U)\rightarrow\mathcal{F}_\infty^{top}(\Gamma)$ which we will also call corestriction. It is a cocontinuous functor between cocomplete categories, so we may take its right adjoint $R_\infty^U:\mathcal{F}_\infty^{top}(\Gamma)\rightarrow\mathcal{F}_\infty^{top}(U)$ and define it to be the restriction functor.

\begin{rem} (\cite[Remark 5.4]{Pascaleff_2019}) This restriction functor is not the Ind completion of the restriction functor for the compact topological Fukaya category. In fact, Ind$(\mathcal{F}_{top})$ is not necessarily the same as $\mathcal{F}^{top}_\infty$. 
\end{rem}
Since taking Ind completion preserves small colimits, we see that the diagram in Proposition \ref{glue} is taken to pushout and pullback, respectively
\begin{center}
\begin{tikzcd}
\mathcal{F}_\infty^{top}(U\cap V)\arrow{r}{C_{\infty}}\arrow{d}{C_{\infty}}&\mathcal{F}_\infty^{top}(U)\arrow{d}{C_{\infty}}\\
\mathcal{F}_\infty^{top}(V)\arrow{r}{C_{\infty}}&\mathcal{F}_\infty^{top}(U \cup V)
\end{tikzcd}
\begin{tikzcd}
\mathcal{F}_\infty^{top}(U\cap V)&\mathcal{F}_\infty^{top}(U)\arrow{l}{R_{\infty}}\\
\mathcal{F}_\infty^{top}(V)\arrow{u}{R_{\infty}}&\mathcal{F}_\infty^{top}(U\cup V)\arrow{l}{R_\infty}\arrow{u}{R_\infty}
\end{tikzcd}
\end{center}
by Ind completion and adjoint. Besides these restriction to open subgraph functors, we have an exceptional restriction to good closed subgraph functor defined by \cite{Dyckerhoff_2017}. By a good  open subgraph $U\subset\Gamma$ we mean an open subgraph whose complement having no vertices of valency $1$. By a good closed subgraph we mean a closed subgraph whose complement is good. We will only deal with restriction to good closed graphs in this paper and will sometimes just call it restriction to closed graphs.

Following \cite[Proposition 5.10]{Pascaleff_2019}, we denote the exceptional restriction functor  $S_\infty^Z:\mathcal{F}_\infty^{top}(\Gamma)\rightarrow\mathcal{F}_\infty^{top}(Z)$ for a good closed subgraph $Z\subset\Gamma$. We refer to loc. cit. for precise definition. The following is the main technical result of \cite{Pascaleff_2019} and will be used in the induction.

\begin{lemma}\label{close_compatible_with_open} (\cite[Proposition 5.14]{Pascaleff_2019}) Exceptional restriction is compatible with open restriction in the following sense. Suppose we have $V\subset U\subset X$ inclusions of ribbon graphs, with $V$ a good closed subgraph in $X$ and $U$ a open subgraph in $X$, then we have commutative diagram
\begin{center}
\begin{tikzcd}
\mathcal{F}_\infty^{top}(X)\arrow{dr}{S_\infty}\arrow{r}{R_\infty}&\mathcal{F}_\infty^{top}(U)\arrow{d}{S_{\infty}}\\
   &\mathcal{F}_\infty^{top}(V).
\end{tikzcd}
\end{center}
\end{lemma}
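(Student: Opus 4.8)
The plan is to reduce the triangle to two more primitive compatibilities of the exceptional restriction: compatibility with open \emph{co}restriction (as opposed to open restriction), and the fact that the co-unit of the corestriction--restriction adjunction becomes an equivalence after applying $S_\infty$. Write $W_X:=X\setminus V$ and $W_U:=U\setminus V$. Since $V$ is good closed in $X$, $W_X$ is a good open subgraph; as $V\subset U$ we have $W_U=U\cap W_X$ and $U\cup W_X=X$, and $W_U$ is open in both $U$ and $X$. One checks directly that $V$ is also good closed in $U$ (it is closed there, and having no valency-$1$ vertices is intrinsic to $V$), so $S_\infty^{V,U}$ is defined. By the recollement of \cite[Proposition 5.10]{Pascaleff_2019}, the exceptional restrictions $S_\infty^{V,X}$ and $S_\infty^{V,U}$ are Verdier localizations, fitting into exact sequences $\mathcal{F}_\infty^{top}(W_X)\xrightarrow{C_{\infty,W_X}}\mathcal{F}_\infty^{top}(X)\xrightarrow{S_\infty^{V,X}}\mathcal{F}_\infty^{top}(V)$ and $\mathcal{F}_\infty^{top}(W_U)\xrightarrow{C_{\infty,W_U}}\mathcal{F}_\infty^{top}(U)\xrightarrow{S_\infty^{V,U}}\mathcal{F}_\infty^{top}(V)$; in particular $\mathcal{F}_\infty^{top}(V)\simeq\mathrm{cofib}(C_{\infty,W_X})\simeq\mathrm{cofib}(C_{\infty,W_U})$ in $\text{DGCat}^{(2)}_{\text{cont}}$, compatibly as the pair $(\Gamma,Z)$ varies.

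First I would show $S_\infty^{V,X}\circ C_{\infty,U}\simeq S_\infty^{V,U}$ as functors $\mathcal{F}_\infty^{top}(U)\to\mathcal{F}_\infty^{top}(V)$. Apply the \v Cech/Mayer--Vietoris property of $\mathcal{F}^{top}$ from Proposition \ref{glue}(2), and its Ind-completion, to the open cover $X=U\cup W_X$ with $U\cap W_X=W_U$; this exhibits $\mathcal{F}_\infty^{top}(X)$ as the pushout of $\mathcal{F}_\infty^{top}(W_X)\xleftarrow{C}\mathcal{F}_\infty^{top}(W_U)\xrightarrow{C}\mathcal{F}_\infty^{top}(U)$ in $\text{DGCat}^{(2)}_{\text{cont}}$, with the leg to $\mathcal{F}_\infty^{top}(X)$ being $C_{\infty,U}$ on the $U$-factor. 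In any pointed cocomplete $\infty$-category the cofibers of the two parallel edges of a pushout square agree, so $C_{\infty,U}$ induces an equivalence $\mathrm{cofib}(C_{\infty,W_U})\xrightarrow{\sim}\mathrm{cofib}(C_{\infty,W_X})$; under the identifications of \cite[Proposition 5.10]{Pascaleff_2019} this is exactly the desired $S_\infty^{V,X}\circ C_{\infty,U}\simeq S_\infty^{V,U}$.

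Next I would show that the co-unit $\varepsilon\colon C_{\infty,U}\,R_\infty^U\Rightarrow\mathrm{id}_{\mathcal{F}_\infty^{top}(X)}$ becomes an equivalence after $S_\infty^{V,X}$. Fix $M$ and set $N:=\mathrm{cofib}(\varepsilon_M)$. Since $C_{\infty,U}$ is fully faithful, the triangle identities force $R_\infty^U(\varepsilon_M)$ to be an equivalence, so $R_\infty^U(N)\simeq 0$ (right adjoints of $\mathbb Z/2$-dg functors are exact). Now invoke the \emph{pullback} description of $\mathcal{F}_\infty^{top}(X)$ for the cover $\{U,W_X\}$ — obtained from the pushout above by taking right adjoints, as in the Ind-level diagrams displayed before Lemma \ref{close_compatible_with_open} — together with the Beck--Chevalley base change $R_\infty^U\circ C_{\infty,W_X}\simeq C_{\infty,W_U}\circ R_\infty^{W_X\to W_U}$ for the cosheaf. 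These identify an object of $\mathcal{F}_\infty^{top}(X)$ killed by $R_\infty^U$ with the corestriction from $W_X$ of its own restriction to $W_X$; hence $N\in\mathrm{im}(C_{\infty,W_X})=\ker S_\infty^{V,X}$, so $S_\infty^{V,X}(\varepsilon_M)$ is an equivalence, naturally in $M$.

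Combining the two steps gives $S_\infty^{V,U}\circ R_\infty^U\simeq\bigl(S_\infty^{V,X}\circ C_{\infty,U}\bigr)\circ R_\infty^U\simeq S_\infty^{V,X}\circ C_{\infty,U}\circ R_\infty^U\simeq S_\infty^{V,X}$, which is the asserted commuting triangle. The hard part is not the formal manipulation but the bookkeeping underneath it: one must know that the equivalences $\mathcal{F}_\infty^{top}(V)\simeq\mathrm{cofib}(C_{\infty,W})$ of \cite[Proposition 5.10]{Pascaleff_2019} are natural enough that the pushout-induced self-equivalence of $\mathcal{F}_\infty^{top}(V)$ in the first step is the identity, and that the base change and kernel identifications used in the second step genuinely hold for the Ind-completed cosheaf. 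This last point is where care is needed, since (\cite[Remark 5.4]{Pascaleff_2019}) $\mathcal{F}_\infty^{top}$ is not the Ind-completion of the compact topological Fukaya category, so these compatibilities must be extracted from the recollement formalism of \cite{Pascaleff_2019} rather than obtained by naively dualizing statements about $\mathcal{F}_{top}$.
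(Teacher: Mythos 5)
The paper itself gives no argument for this lemma --- it is imported verbatim from \cite[Proposition 5.14]{Pascaleff_2019} --- so the only question is whether your reconstruction is sound. Your Step 1 (identify $\mathcal{F}_\infty^{top}(V)$ with the cofibers of the corestrictions from $W_X$ and $W_U$, and use the cosheaf pushout for the cover $X=U\cup W_X$ to see that $C_{\infty,U}$ induces an equivalence of cofibers) is reasonable and close in spirit to the recollement formalism of \cite{Pascaleff_2019}, modulo the compatibility of the two identifications with the actual functors $S_\infty^{V,X}$, $S_\infty^{V,U}$, which you correctly flag as the statement ``exceptional restriction is compatible with open \emph{co}restriction'' and which should be cited or proved rather than treated as bookkeeping.

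The genuine gap is in Step 2: the claim that $C_{\infty,U}$ is fully faithful is false in general, and it is the load-bearing step. Open corestrictions in this theory behave like pushforwards along \emph{closed} immersions on the mirror side, not open ones: in Example \ref{dumbbell} the corestriction from a segment into the wheel $\Gamma(1,0)$ sends the generator to the skyscraper $\mathcal{O}_0$ in $\coh_{\mathbb{Z}/2}(\mathbb{A}^1)$, whose $\mathbb{Z}/2$-graded endomorphisms are $k\oplus k[1]$, so the functor is not fully faithful; likewise, in the very situation of the lemma with $V$ a circle of the dumbbell, $U$ the corresponding wheel and $X=\Gamma_D$, the functor $C_{\infty,U}$ is mirror to pushforward along a coordinate axis into $\spec k[x,y]/(xy)$, again not fully faithful. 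Without full faithfulness the unit $\eta_{R_\infty^U M}$ need not be invertible, $R_\infty^U(\varepsilon_M)$ need not be an equivalence, and the cofiber $N$ of the counit need not satisfy $R_\infty^U(N)\simeq 0$; so the chain $N\in\ker R_\infty^U\subset\mathrm{im}(C_{\infty,W_X})=\ker S_\infty^{V,X}$ never gets started (and the middle inclusion, your Beck--Chevalley step, is itself nontrivial, as you note). Since, granting Step 1, the lemma is exactly the assertion that $S_\infty^{V,X}$ inverts the counit of $(C_{\infty,U},R_\infty^U)$, Step 2 is the entire content, and as written it rests on a false premise. The conclusion you want is still plausible --- in the nodal example $\mathrm{cofib}(i_*i^!M\to M)$ is indeed generated by pushforwards from the complementary axis --- but it has to be extracted from the localization/recollement structure of \cite{Pascaleff_2019} (or from an explicit description of $S_\infty$ as modding out corestrictions from external edges), not from full faithfulness of $C_{\infty,U}$.
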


The topological Fukaya category of $\Gamma(p,q)$ can be computed as a homotopy pullback via exceptional restrictions. More generally, consider a ribbon graph $X$ and its two closed sugbraphs $Z_1,Z_2$ satisfying the following:
	%More generally, for two closed subgraphs $Z_1,Z_2$ decomposing $X$. If their intersection is locally like $\Gamma(p,q)$, then exceptional restriction exhibits $\mathcal{F}^{top}_\infty(X)$ as a homotopy pullback.
\begin{itemize}
	\item $Z_1\cup Z_2=X$;
	\item The underlying topological space of $Z_{1,2}=Z_1\cap Z_2$ is a disjoint union of circles;
	\item For each circle $C$ in $Z_{1,2}$, let $N_CX$ be the `neighborhood' of $C$ in $X$, obtained via adding to $C$ all the open edges of $X$ adjacent to $C$; we require the  inclusion $$N_CX\cap Z_1\subset N_CX \supset N_CX\cap Z_2$$ and to be isomorphic to $$\Gamma(n_1,0)\subset\Gamma(n_1,n_2)\supset\Gamma(0,n_2).$$
\end{itemize}

\begin{prop}\label{gluealongcircle}(\cite[Theorem 6.6]{Pascaleff_2019} ) 
With $Z_{1,2},Z_1,Z_2,X$ given as above,  we have a pullback diagram
\begin{center}
\begin{tikzcd}
\mathcal{F}_\infty^{top}(X)\arrow{r}{S_\infty}\arrow{d}{S_\infty}&\mathcal{F}_\infty^{top}(Z_1)\arrow{d}{S_\infty}\\
\mathcal{F}_\infty^{top}(Z_2)\arrow{r}{S_\infty}&\mathcal{F}_\infty^{top}(Z_{1,2}).
\end{tikzcd}
\end{center}
Moreover, in the local situation, i.e. $X=\Gamma(n_1,n_2)$, $Z_1=\Gamma(n_1,0)$, $Z_2=\Gamma(0,n_2)$, the diagram is isomorphic to the mirror pullback diagram in which all the maps are restriction:
\begin{center}
\begin{tikzcd}
\text{QCoh}_{\mathbb{Z}/2}(\mathbb{P}^1(p,q))\arrow{r}\arrow{d}&\text{QCoh}_{\mathbb{Z}/2}([\mathbb{A}^1/\mu_{n_1}])\arrow{d}\\
\text{QCoh}_{\mathbb{Z}/2}([\mathbb{A}^1/\mu_{n_2}])\arrow{r}&\text{QCoh}_{\mathbb{Z}/2}(\mathbb{G}_m)
\end{tikzcd}
\end{center}
via the comparison isomorphisms in Example \ref{wheel}.

\end{prop}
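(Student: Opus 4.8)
The plan is to prove the \emph{local} case $X=\Gamma(n_1,n_2)$, $Z_1=\Gamma(n_1,0)$, $Z_2=\Gamma(0,n_2)$, $Z_{1,2}=\Gamma(0,0)$ first, and then bootstrap to the general statement by gluing along the circles of $Z_{1,2}$, using the compatibility of exceptional and open restriction (Lemma~\ref{close_compatible_with_open}).

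For the local case, I would transport the square through the comparison equivalences of Example~\ref{wheel} and their Ind-completions, turning the four categories into $\text{QCoh}_{\mathbb{Z}/2}(\mathbb{P}^1(n_1,n_2))$, $\text{QCoh}_{\mathbb{Z}/2}([\mathbb{A}^1/\mu_{n_1}])$, $\text{QCoh}_{\mathbb{Z}/2}([\mathbb{A}^1/\mu_{n_2}])$ and $\text{QCoh}_{\mathbb{Z}/2}(\mathbb{G}_m)$. The key step is to check that under this dictionary each exceptional restriction $S_\infty$ becomes the pullback $j^*$ along the evident toric open immersion, where $[\mathbb{A}^1/\mu_{n_1}]$ and $[\mathbb{A}^1/\mu_{n_2}]$ are the two standard affine charts of the weighted projective stack $\mathbb{P}^1(n_1,n_2)$ and $\mathbb{G}_m$ is their overlap. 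Granting this, the square is precisely the two-chart \v{C}ech descent square for $\mathbb{P}^1(n_1,n_2)$: all four stacks are quasi-compact and quasi-separated, so $\text{QCoh}_{\mathbb{Z}/2}$ is a Zariski sheaf, and for a two-element open cover the \v{C}ech limit collapses to the pullback of the resulting cospan; hence the square is a pullback and the ``moreover'' clause holds as well. To establish the identification of $S_\infty$ with $j^*$ I would unwind Dyckerhoff's construction of exceptional restriction (\cite{Dyckerhoff_2017}) on the explicit quiver model $Q(\Gamma(p,q))$ from Example~\ref{wheel}: passing to the good closed subgraph $\Gamma(n_1,0)\subset\Gamma(n_1,n_2)$ deletes the $n_2$ downward spokes, and on Ind-completed quiver representations this is exactly the localization matching restriction of $\mu_{n_2}$-equivariant graded modules to the affine chart obtained by inverting the coordinate vanishing at the $\mu_{n_2}$-stacky point.

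For the general statement, after subdividing $X$ (harmless by Proposition~\ref{invariance}) I may assume the circles $C_1,\dots,C_\ell$ of $Z_{1,2}$ have pairwise disjoint neighborhoods $N_{C_j}X$; put $N=\bigsqcup_j N_{C_j}X$, an open subgraph with $N\cap Z_i=\bigsqcup_j(Z_i\cap N_{C_j}X)$ and $N\cap Z_{1,2}=Z_{1,2}$. Commutativity of the square is immediate from transitivity of exceptional restriction, as the functor from $X$ down to $Z_{1,2}$ factors both through $Z_1$ and through $Z_2$. For the pullback property I would use that $U_i:=X\setminus Z_i$ are disjoint good open subgraphs with $U_1\sqcup U_2=X\setminus Z_{1,2}$, together with the localization sequences attached to the open--closed decompositions $X=U_i\sqcup Z_i$ and $X=(U_1\sqcup U_2)\sqcup Z_{1,2}$: these exhibit $S_\infty^{Z_i}$ as a Verdier localization whose kernel is the image of $\mathcal{F}^{top}_\infty(U_{3-i})$ under corestriction, and $S_\infty^{Z_{1,2}}$ as one whose kernel is the image of $\mathcal{F}^{top}_\infty(U_1)\times\mathcal{F}^{top}_\infty(U_2)$; since the first two kernels are mutually transverse and jointly generate the third, a standard stable-categorical gluing lemma promotes the commuting square to a pullback. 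Equivalently, applying Lemma~\ref{close_compatible_with_open} to the chains $Z_{1,2}\subset Z_i\subset X$ and $Z_i\cap N\subset N\subset X$ pulls the pullback assertion back onto the open subgraph $N$, where it splits as the disjoint union over $j$ of the local squares already proved, and one reassembles via the cosheaf presentation of $\mathcal{F}^{top}_\infty$ along $N$ and its open complement.

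I expect the main obstacle to be the local identification of $S_\infty$ with $j^*$: once that dictionary between the combinatorial exceptional restriction and the geometric chart restriction is in place, everything else — Zariski descent for $\text{QCoh}_{\mathbb{Z}/2}$, transitivity of exceptional restriction, and the recollement bookkeeping — is formal. This is consistent with the proof of \cite[Theorem~6.6]{Pascaleff_2019}, which we in fact cite.
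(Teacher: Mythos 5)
In the paper this proposition is not proved at all: it is recalled verbatim from \cite[Theorem 6.6]{Pascaleff_2019}, and the citation \emph{is} the proof, so there is no in-paper argument to compare with. Your sketch is essentially a reconstruction of the Pascaleff--Sibilla argument itself: reduce the cartesian square to the local wheel case using the compatibility of exceptional and open restriction (Lemma~\ref{close_compatible_with_open}) and a neighborhood of the gluing circles, and in the local case identify the square with the two-chart Zariski descent square for $\mathbb{P}^1(n_1,n_2)$ via the equivalences of Example~\ref{wheel}. That is the right shape of the argument, and if you simply cite \cite{Pascaleff_2019} (as the paper does) nothing more is required.

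Read as a standalone proof, however, two points are genuinely incomplete. First, the identification of $S_\infty$ with the chart restriction $j^*$ is the substantive content of the local case (it is also exactly the ``moreover'' clause, which doubles as the input to your descent argument, so as written the local step is nearly circular); unwinding the construction of \cite{Dyckerhoff_2017} on the quiver model $Q(\Gamma(n_1,n_2))$ must actually be carried out, for both closed inclusions $\Gamma(n_1,0),\Gamma(0,n_2)\subset\Gamma(n_1,n_2)$ compatibly, and this is where the real work lies. Second, the ``standard stable-categorical gluing lemma'' is not standard in the form you state it: the square of localizations is cartesian if and only if $S^{Z_2}_\infty$ induces an equivalence from the fiber of $\mathcal{F}^{top}_\infty(X)\to\mathcal{F}^{top}_\infty(Z_1)$ to the fiber of $\mathcal{F}^{top}_\infty(Z_2)\to\mathcal{F}^{top}_\infty(Z_{1,2})$; ``mutually transverse and jointly generate'' does not by itself give this. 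Establishing it needs (i) the correct description of the kernels --- $\ker S^{Z_i}_\infty$ is the localizing subcategory \emph{generated by} the image of $\mathcal{F}^{top}_\infty(U_i)$ with $U_i=X\setminus Z_i$ (your indexing $U_{3-i}$ is off, and corestriction is not fully faithful, so the image is not literally the kernel), and (ii) full faithfulness of $S^{Z_2}_\infty$ on that subcategory, an orthogonality statement that does not come for free and is part of what \cite{Pascaleff_2019} actually prove.
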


As explained in \label{restriction}(\cite[Definition 8.2]{Pascaleff_2019}), there is a restriction functor $R_p$ for each puncture $p$ on the punctured curve $\mathcal C$ defined as follows. Let $X$ be an arbitrary skeleton for $\mathcal{C}$. For each puncture $p$ of $\mathcal{C}$ such that $X$ does not approach $p$, we can choose another skeleton $X'$ such that it has a circle at $p$, which means that the component $\mathcal{C}-X'$ containing $p$ has an $S^1$ as boundary. This boundary can be viewed as a good closed subgraph of $X'$. We consider the composition
\begin{center}
\begin{tikzcd}
\mathcal{F}_\infty^{top}(X)\arrow{r}{\Psi}&\mathcal{F}_\infty^{top}(X')\arrow{r}{R_\infty}&\mathcal{F}_\infty^{top}(S^1).
\end{tikzcd}
\end{center}
We call this composition $R_p$ and it does not depend on the choice of the skeleton $X'$.

\subsection{Descent description} We give a descent description of the matrix factorization category. Recall that the combinatorial data for a Calabi-Yau 3-orbifold is a triangulated planar polygon $P_\Sigma$, whose faces, edges and vertices are indexed by the collection of cones $\Sigma(3),\Sigma(2)$ and $\Sigma(1)$ respectively. We may consider the dual graph $R_\Sigma$ of this triangulated polygon, so it has vertices indexed by $\text{Vertex}(R_\Sigma)=\Sigma(3)$ and edges indexed by $\text{Edge}(R_\Sigma)=\Sigma(2)$. We put the category $\text{MF}^\infty(\mathcal{X}_\sigma,W)\cong\text{MF}^\infty([\mathbb{A}^3/G],W=z_1z_2z_3)$ on each vertex $\sigma$. For an edge $\nu\in\Sigma(2)$,  we put a category $\text{MF}^{\infty}(\mathcal{X}_\nu,W)$ on it. There is a restriction functor from the vertices to the edges, given by the obivous Zariski open restriction functor of the matrix factorization category. We have assigned a $\mathbb{Z}/2$-dg category for each component of the graph and defined restriction functors, and we can actually think of it as a `constructible' sheaf of dg categories on the graph. We compute the value on the edges more explicitly as follows.

\begin{prop} \label{intersection} For the affine LG model $(\mathcal{X}_\sigma=[\mathbb{A}^3/G],W=z_1z_2z_3)$, there are three distinguished open charts in $\mathcal X_\sigma$ corresponding to three 2-dimensional subcones of $\sigma$. The cone $\nu\in\Sigma(2)$ generated by $\{(0,0,1),(0,m,1)\}$ gives an open chart $[\mathbb{G}_m\times\mathbb{A}^2/G]\subset[\mathbb{A}^3/G]$. We have
$$\text{MF}^{\infty}(\mathcal{X}_\nu,W)\cong\text{MF}^\infty([\mathbb{G}_m\times\mathbb{A}^2/G],W=z_1z_2z_3)\cong\text{QCoh}_{\mathbb{Z}/2}([\mathbb{G}_m/G]).$$
\end{prop}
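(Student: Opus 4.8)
The plan is to identify the open chart $\mathcal{X}_\nu$ combinatorially and then obtain its matrix factorization category from the equivariant dimensional reduction already carried out in Proposition~\ref{affinecomputation}, restricted to this chart. In the coordinates of \eqref{eqn:b-rays} the rays $b_1,b_2,b_3$ of $\sigma$ correspond to the coordinate functions $z_1,z_2,z_3$ on $\mathbb{A}^3$, and the open substack of $\mathcal{X}_\sigma=[\mathbb{A}^3/G]$ attached to a $2$-dimensional face of $\sigma$ is obtained by inverting the coordinate of the unique ray not lying on that face. Since $\nu$ is spanned by $b_3=(0,0,1)$ and $b_2=(0,m,1)$, the omitted ray is $b_1$, so $\mathcal{X}_\nu=[\{z_1\neq 0\}/G]=[\mathbb{G}_m\times\mathbb{A}^2/G]$, and $W|_{\mathcal{X}_\nu}=z_1z_2z_3$ is still $G$-invariant because $\rho_1\rho_2\rho_3=1$; the first asserted equivalence then holds because the two Landau-Ginzburg models literally coincide.

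For the second equivalence I would restrict the equivalence $\text{Coh}_{\mathbb{Z}/2}(\mathcal{Y})\cong D_{Sing}(\mathcal{X}_0)$ of Proposition~\ref{affinecomputation} to the Zariski-open locus $\{z_1\neq 0\}$. On $\mathbb{A}^3$ this locus is $\mathbb{G}_m\times\mathbb{A}^2$, whose singular fiber under $W$ is that of $W|_{\mathcal{X}_\nu}$; on the intermediate space $(z_1z_2)^{-1}(0)\subset\mathbb{A}^3$ it is $\mathbb{G}_m\times\mathbb{A}^1_{z_3}$; and on $\AandA=\{z_1z_2=0\}$ it forces $z_2=0$, so it degenerates to $\mathbb{G}_m$, i.e. $\mathcal{Y}|_{\{z_1\neq 0\}}=[\mathbb{G}_m/G]$. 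Since $v_*\circ u^*$ is built from a projection and a closed immersion, both compatible with the flat localization $z_1\mapsto z_1^{-1}$, it restricts to a functor $\text{Coh}_{\mathbb{Z}/2}([\mathbb{G}_m/G])\to\text{MF}(\mathcal{X}_\nu,W)$; on the generators used in the proof of Proposition~\ref{affinecomputation} the sheaf $\mathcal{O}_{\mathbb{A}^1_1}(\theta)=k[z_1,z_2]/z_1$ becomes zero once $z_1$ is inverted, while the remaining $\mathcal{O}_{\mathbb{A}^1_2}(\theta)=\mathcal{O}_{\mathbb{G}_m}(\theta)$, $\theta\in G^\vee$, still generate (recall that $G$ is finite abelian, so these exhaust the twists by its characters), and the $\text{Ext}$-computations of that proof survive flat localization, so the restricted functor is again an equivalence. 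As $G$ is finite and $\mathbb{G}_m$ is smooth, $[\mathbb{G}_m/G]$ is a smooth Deligne-Mumford stack, hence $\text{Coh}_{\mathbb{Z}/2}([\mathbb{G}_m/G])=\text{Perf}_{\mathbb{Z}/2}([\mathbb{G}_m/G])$; passing to Ind-completions and using $\text{QCoh}_{\mathbb{Z}/2}=\text{Ind}\,\text{Perf}_{\mathbb{Z}/2}$ then gives $\text{MF}^\infty(\mathcal{X}_\nu,W)\cong\text{QCoh}_{\mathbb{Z}/2}([\mathbb{G}_m/G])$. (Alternatively, bypassing Proposition~\ref{affinecomputation}, one can apply equivariant dimensional reduction directly to the gauged Landau-Ginzburg model obtained by writing $W=z_3\cdot(z_1z_2)$ with $z_3$ a linear fiber coordinate of weight $\rho_3$ over $[\mathbb{G}_m\times\mathbb{A}^1_{z_2}/G]$ and $z_1z_2$ a $G$-equivariant section of the dual line bundle, whose vanishing locus is again $[\mathbb{G}_m/G]$.)

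I expect the only genuine difficulty to be justifying that the equivalence of Proposition~\ref{affinecomputation} is compatible with restriction to the open chart: one must check that $v_*\circ u^*$ commutes with the open immersion $\{z_1\neq 0\}\hookrightarrow\mathbb{A}^3$ by flat base change, and keep careful track of the $G$-linearizations so that the generator supported on $\{z_1=0\}$ drops out while the surviving generators still generate $\text{Perf}_{\mathbb{Z}/2}([\mathbb{G}_m/G])$. Everything else, namely the toric identification of the chart and the passage to Ind-completions, is routine.
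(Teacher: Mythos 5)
Your proposal is correct and takes essentially the same approach as the paper: the paper also proves the chart equivalence by running the equivariant dimensional reduction of Proposition \ref{affinecomputation} on the open locus $\{z_1\neq 0\}$ (pullback along the projection forgetting $z_3$, pushforward into $\{z_2z_3=0\}$), sending the generators $k[z_1,z_1^{-1}](\theta)$ to $k[z_1,z_1^{-1},z_3](\theta)$ and checking the mapping complexes, then Ind-completing. The only cosmetic difference is that you obtain this by flat-base-change localization of the global equivalence, while the paper simply redoes the identical computation directly on the open subscheme.
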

\begin{proof} The proof parallels Proposition \ref{affinecomputation} except that we work on an open subscheme. The group $G$ acts on $\mathbb{G}_m$ via $\rho_1$. The category $\text{Coh}([\mathbb{G}_m/G])$ has generators $\{k[z_1,z_1^{-1}](\theta)\}$ with $\theta\in G^\vee$. One can first pullback along the projection $[\spec k[z_1,z_1^{-1},z_3]/G]\rightarrow[\spec k[z_1,z_1^{-1}]/G]$ then pushforward into $[\spec k[z_1,z_1^{-1},z_2,z_3]/(z_2z_3=0)/G]$. Projecting down to category of singularity this gives a functor
$$\text{Coh}([\spec k[z_1,z_1^{-1}]/G])\rightarrow D_{Sing}([\spec k[z_1,z_1^{-1},z_2,z_3]/(z_2z_3=0)/G]),$$ 
taking $\{k[z_1,z_1^{-1}](\theta)\}$ to $\{k[z_1,z_1^{-1},z_3](\theta)\}$ which is a set of generators for the category of singularity. Direct computation shows that after taking $\mathbb{Z}/2$-folding of the left-hand side this is a cohomologically isomorphism on mapping complex, hence is an equivalence. Note on the right hand side
$$D_{Sing}([\spec k[z_1,z_1^{-1},z_2,z_3]/(z_2z_3=0)/G])\cong\text{MF}([\spec k[z_1,z_1^{-1},z_2,z_3]/G],z_1z_2z_3).$$
After taking Ind completion we get the desired equivalence.
\end{proof}
\begin{rem} We have the following remarks in order.
	\begin{enumerate}
%		\item 	Note that $G$ acts on $\mathbb{G}_m$ via $\rho_1$ with image $\mu_r$ and kernel $\mu_m$ and that $[\mathbb{G}_m/\mu_r]\cong\mathbb{G}_m$. Hence $[\mathbb{G}_m/G]$ is just $[\mathbb{G}_m/\mu_m]$ with trivial action.
		\item
		The proof we adopt above makes explicit the commutativity of the diagram comparing $\text{MF}_\infty$ and $\mathcal{F}^{top}_\infty$ 
		\begin{center}
		\begin{tikzcd}
		\mathcal{F}_\infty^{top}(\Gamma)\arrow{r}{\cong}\arrow{d}{S_\infty=\bigoplus_p R_p}&\text{MF}^\infty([\mathbb{A}^3/G],W)\arrow{d}\\
		\mathcal{F}_\infty^{top}(\coprod_p S^1)\arrow{r}{\cong}&\text{MF}^\infty([\mathbb{G}_m\times\mathbb{A}^2/G],W)
		\end{tikzcd}
		\end{center} where $\Gamma$ is the skeleton for $\mathcal{C}_\sigma$ as in Figure \ref{skeleton}. In the equality $S_\infty=\bigoplus_p R_p$, we take direct sum over $p$ of all the punctures on the mirror curve corresponding to $\nu$ (there are $m$ copies of them). The same applies  $\coprod_p S^1$.
		\item We work with the preferred cone $\nu$ in our coordinates. A similar statement is true for the other two edges and can be computed similarly.
%\\ (3) It should be possible to  use  the periodicity theorem as in \cite{Pascaleff_2019} to prove the result. But we don't know how to make this precise.
	\end{enumerate}
\end{rem}
Since we have a sheaf $B$ of dg categories on the graph $R_\Sigma$, we take its global section as the homotopy equalizer
$$B(R)=\text{Eq}[\prod_{\sigma\in\Sigma(3)} B(\sigma)\rightrightarrows \prod_{\nu\in\Sigma(2)} B(\nu)].$$
\begin{prop} We have the equivalence $B(R_\Sigma)\cong\text{MF}^\infty(\mathcal{X}_\Sigma,W)$.
\end{prop}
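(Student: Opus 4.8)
The plan is to exhibit $\text{MF}^\infty(\mathcal{X}_\Sigma,W)$ as a homotopy limit over the affine open cover $\{\mathcal{X}(\sigma)\}_{\sigma\in\Sigma(3)}$ of $\mathcal{X}_\Sigma$ and to match this limit with the equalizer defining $B(R_\Sigma)$. The input is Preygel's \'etale descent for matrix factorizations (Proposition~A.3.1 of \cite{preygel2011thomsebastiani}), which in particular yields Zariski descent. Since the $\mathcal{X}(\sigma)$, $\sigma\in\Sigma(3)$, form a Zariski cover of $\mathcal{X}_\Sigma$ (the standard cover of a toric DM stack by the charts of its maximal cones), with $\mathcal{X}(\sigma)\cap\mathcal{X}(\sigma')=\mathcal{X}(\sigma\cap\sigma')$, the category $\text{MF}^\infty(\mathcal{X}_\Sigma,W)$ is the homotopy limit of the \v Cech diagram of this cover.

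The heart of the argument is a critical-locus computation pinning down which \v Cech terms survive. For a single cone, $\text{MF}^\infty(\mathcal{X}(\sigma),W)=B(\sigma)$ by definition. For $\sigma\neq\sigma'$ the common face $\sigma\cap\sigma'$ is a $2$-cone, a ray, or the origin. If it is a $2$-cone $\nu$, then $\mathcal{X}(\sigma)\cap\mathcal{X}(\sigma')$ is the chart $\mathcal{X}_\nu$ and $\text{MF}^\infty(\mathcal{X}_\nu,W)=B(\nu)$; Proposition~\ref{intersection} moreover identifies it with $\text{QCoh}_{\mathbb{Z}/2}([\mathbb{G}_m/G_\nu])$, and one checks that the \v Cech restriction functors agree with the Zariski-open restrictions used to define the sheaf $B$ on $R_\Sigma$. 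If $\sigma\cap\sigma'$ is a ray $\rho$, the chart $\mathcal{X}(\rho)$ has the form $[\mathbb{G}_m^2\times\mathbb{A}^1/G']$, on which (in coordinates adapted to $\rho$) $W$ is a unit times the $\mathbb{A}^1$-coordinate; hence $W^{-1}(0)$ is a smooth stack, $\text{Coh}=\text{Perf}$ there, and $\text{MF}^\infty(\mathcal{X}(\rho),W)\simeq 0$. If $\sigma\cap\sigma'=\{0\}$ the chart is the dense torus (modulo a finite group), on which $W$ is invertible, so again $\text{MF}^\infty\simeq 0$. The same analysis covers all higher intersections: any edge of a triangulated polygon lies in at most two triangles, so any $(n+1)$-fold intersection of pairwise distinct charts with $n\geq 2$ reduces to one of the vanishing charts above.

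Given these vanishings, the degeneration criterion of Remark~\ref{cech}(2) collapses the \v Cech homotopy limit to the equalizer
\[
\text{Eq}\Bigl[\,\prod_{\sigma\in\Sigma(3)}\text{MF}^\infty(\mathcal{X}(\sigma),W)\;\rightrightarrows\;\prod_{(\sigma,\sigma')}\text{MF}^\infty(\mathcal{X}(\sigma)\cap\mathcal{X}(\sigma'),W)\,\Bigr].
\]
In the right-hand product the diagonal factors $\sigma=\sigma'$ carry the identity under both structure maps and may be discarded; the factors indexed by non-adjacent pairs are zero; and the two factors attached to an adjacent \emph{unordered} pair impose the same gluing condition. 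Removing these redundancies turns the equalizer into $\text{Eq}\bigl[\prod_{\sigma\in\Sigma(3)}B(\sigma)\rightrightarrows\prod_{\nu\in\Sigma(2)}B(\nu)\bigr]=B(R_\Sigma)$, which is the claim. I expect the main obstacle to be the middle step: the coordinate computation of $W$ on the ray and origin charts needed to prove that $\text{MF}^\infty$ vanishes there, together with the routine but necessary verification that the functors appearing in the two equalizers genuinely coincide; everything else is formal descent bookkeeping, modulo the mild point that terms with $\text{MF}^\infty\simeq 0$ do not affect the homotopy limit.
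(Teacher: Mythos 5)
Your proposal is correct and follows essentially the same route as the paper: Preygel's descent applied to the Zariski cover by the charts $\mathcal{X}(\sigma)$, $\sigma\in\Sigma(3)$, vanishing of $\text{MF}^\infty$ on intersections whose common face is smaller than a $2$-cone (since there $W^{-1}(0)$ is smooth or empty), and the degeneration of the \v Cech limit to the equalizer defining $B(R_\Sigma)$ via Remark~\ref{cech}. Your write-up is in fact somewhat more explicit than the paper's proof (the coordinate analysis on ray and torus charts and the ordered-versus-unordered pair bookkeeping), but there is no difference in method.
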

\begin{proof}
Note that $\{\mathcal{X}_\sigma\}$ is an open cover of $\mathcal{X}_\Sigma$. By the descent property of $\text{MF}^\infty(-,W)$, we can compute the global section by taking homotopy limit of C\v{e}ch complexes. For $(n>2)$-intersections, $\text{MF}^\infty(-,W)$ evaluates to be zero category because the zero locus $W^{-1}(0)$ has no singularity at all. By Remark \ref{cech} we see that $\text{MF}^\infty(-,W)$ is computed by the homotopy limit of the diagram as $B(R_\Sigma)$.
\end{proof}
We will eventually see that the topological Fukaya category of the mirror curve is computed by the same diagram.
\subsection{Proof of the main theorem}
Now we are ready to give the proof for  mirror symmetry in the general situation. 
\begin{prop}
Recall that by $\mathcal{F}^{top}_\infty(\mathcal{C})$, we mean to first pick a compact skeleton $X$ of $\mathcal{C}$ and take $\mathcal{F}^{top}_\infty(X)$. There is an equivalence  $\Psi:\mathcal{F}^{top}_\infty(\mathcal{C}_\Sigma)\cong B(R_\Sigma)$ such that the following diagram commutes:
\begin{center}
\begin{tikzcd}
\mathcal{F}_\infty^{top}(\mathcal{C}_\Sigma)\arrow{r}{\Psi,\cong}\arrow{d}{\bigoplus_{p\in\text{Pun}(\nu)} R_p}&B(R_\Sigma)\arrow{d}\\
\mathcal{F}_\infty^{top}(\coprod_{\text{Pun}(\nu)} S^1)\arrow{r}{\Psi,\cong}&B(\nu).
\end{tikzcd}
\end{center}
Each $\nu\in\Sigma(2)$ corresponding to a segment lying on the boundary of the polygon determines a collection of punctures $\text{Pun}(\nu)$ in $\mathcal{C}_\Sigma$ and we have a well-defined restriction functor to them by Proposition \ref{restriction}, while on the other hand $B(R_\Sigma)$ has a restriction functor to its evaluation on this edge $B(\nu)$, which corresponds to the Zariski open restriction functor of the matrix factorization.
\end{prop}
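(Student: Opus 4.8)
The plan is to proceed by induction on the number of triangles in the triangulation $P_\Sigma$ (equivalently, on $|\Sigma(3)|$), mirroring the inductive structure of the descent description on the B-side. The base case $|\Sigma(3)|=1$ is precisely the affine statement: there $\mathcal{C}_\Sigma = \mathcal{C}_\sigma$, the category $B(R_\Sigma)$ is just $\mathrm{MF}^\infty(\mathcal{X}_\sigma,W)$, and the commutativity of the displayed square is the content of the Remark following Proposition \ref{intersection} (the square identifying $S_\infty = \bigoplus_p R_p$ with the Zariski restriction to $\mathrm{MF}^\infty([\mathbb{G}_m\times \mathbb{A}^2/G],W)$), combined with the affine equivalence established in the previous section.

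For the inductive step, pick a triangle $\sigma_0 \in \Sigma(3)$ that is a ``leaf'' of the dual graph $R_\Sigma$ in a suitable sense — more robustly, split the polygon along an interior edge $\tau = \sigma \cap \sigma'$ into two sub-polygons $P'$ and $P''$ (each with strictly fewer triangles), inducing sub-fans $\Sigma'$, $\Sigma''$ and a Mayer–Vietoris-type decomposition of the graph $R_\Sigma = R_{\Sigma'} \cup_{e_\tau} R_{\Sigma''}$ glued along the single edge $e_\tau$. On the B-side, since $B$ is a sheaf of dg categories on $R_\Sigma$ and higher intersections vanish (Remark \ref{cech}), $B(R_\Sigma)$ is the homotopy pullback of $B(R_{\Sigma'}) \to B(e_\tau) \leftarrow B(R_{\Sigma''})$, where the legs are the Zariski restriction functors and $B(e_\tau) \cong \mathrm{QCoh}_{\mathbb{Z}/2}([\mathbb{G}_m/G_\tau])$ by Proposition \ref{intersection}. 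On the A-side, the mirror curve decomposition $\widetilde{\mathcal{C}}_\Sigma = \bigcup_\sigma \widetilde{\mathcal{C}_\sigma^\circ}$ from Section \ref{sec:general-mirror-curve} gives a decomposition of the skeleton: the circles $\widetilde{\mathcal C_\tau}$ (a disjoint union of $|G_\tau^\vee|$ circles) separate a skeleton of $\mathcal{C}_{\Sigma'}$ from a skeleton of $\mathcal{C}_{\Sigma''}$ inside a skeleton $X$ of $\mathcal{C}_\Sigma$. Applying Proposition \ref{gluealongcircle} (gluing along circles, via exceptional restriction $S_\infty$) realizes $\mathcal{F}^{top}_\infty(\mathcal{C}_\Sigma)$ as the homotopy pullback of $\mathcal{F}^{top}_\infty(\mathcal{C}_{\Sigma'}) \to \mathcal{F}^{top}_\infty(\coprod_{G_\tau^\vee} S^1) \leftarrow \mathcal{F}^{top}_\infty(\mathcal{C}_{\Sigma''})$. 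By the inductive hypothesis the two outer terms match $B(R_{\Sigma'})$ and $B(R_{\Sigma''})$ compatibly with restriction to $e_\tau$, and the middle term matches $B(e_\tau)$ by the affine/wheel computation of Example \ref{wheel} together with the identification $\mathcal{F}^{top}_\infty(\coprod_{G_\tau^\vee} S^1) \cong \mathrm{QCoh}_{\mathbb{Z}/2}([\mathbb{G}_m/G_\tau])$. A map of homotopy pullback diagrams that is an equivalence on each vertex is an equivalence on the pullback, giving $\Psi: \mathcal{F}^{top}_\infty(\mathcal{C}_\Sigma) \cong B(R_\Sigma)$.

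It remains to check the commuting square in the statement: for a boundary edge $\nu \in \Sigma(2)$ with puncture set $\mathrm{Pun}(\nu)$, the exceptional/open restriction $\bigoplus_{p\in \mathrm{Pun}(\nu)} R_p$ on the A-side corresponds under $\Psi$ to the Zariski restriction $B(R_\Sigma) \to B(\nu)$. Here the key tool is Lemma \ref{close_compatible_with_open} (compatibility of exceptional restriction with open restriction): the edge $\nu$ lies on the boundary of exactly one sub-polygon in the decomposition, so $R_p$ factors through the restriction to the corresponding $\mathcal{F}^{top}_\infty(\mathcal{C}_{\Sigma'})$, and then through the affine-case square already verified. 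By the inductive hypothesis this matches the composite $B(R_\Sigma) \to B(R_{\Sigma'}) \to B(\nu)$, which is the Zariski restriction to $\nu$ by functoriality of restriction in a sheaf.

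The main obstacle I anticipate is bookkeeping the labelings and the compatibility of the group actions under the gluing — specifically, ensuring that the circle-gluing of Proposition \ref{gluealongcircle} is applied with the labelings of the connected components of $\widetilde{\mathcal C_\tau}$ by $G_\tau^\vee$ matching on the two sides (as set up in Section \ref{sec:general-mirror-curve}), and that the diffeomorphisms $u_\sigma$ carrying the $\Picst(\mathcal X)$-action onto $\mathcal C_\sigma^\circ$ are consistent with the equivariant identifications used in the affine case. This is where one must be careful that the gluing data on the two sides genuinely agree rather than merely each being a pullback; the choice of compatible dumbbell lifts $\Gamma_D$ (Remark following the affine Proposition) is what pins this down, and threading that choice coherently through the induction is the delicate point.
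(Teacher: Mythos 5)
Your strategy coincides with the paper's in outline: induction on $\#\Sigma(3)$, realization of both sides as homotopy pullbacks (Proposition \ref{gluealongcircle} on the A-side, \v Cech descent for $B$ on the B-side), and Lemma \ref{close_compatible_with_open} to factor the puncture restrictions $R_p$ so that the square in the statement commutes. The genuine gap is in the inductive decomposition itself. You assume that either some triangle is a ``leaf'' of the dual graph $R_\Sigma$, or (``more robustly'') that there is a single interior edge $\tau$ cutting the triangulated polygon into two sub-polygons meeting only along $\tau$. Neither need exist: the triangulation may have interior lattice points, in which case $R_\Sigma$ contains cycles, so it can have no valency-one vertex and no separating edge. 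The basic example is $K_{\mathbb{P}^2}$: the triangle with vertices $(1,0),(0,1),(-1,-1)$ triangulated by coning off the interior point $(0,0)$ into three triangles has dual graph a $3$-cycle; every triangle meets the union of the other two along \emph{two} of its edges, and deleting any single interior edge leaves $R_\Sigma$ connected. So your Mayer--Vietoris step cannot get started on such fans.

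The repair is exactly what the paper does, following \cite{Pascaleff_2019}, Section 7.2: build the polygon by adjoining one triangle at a time, allowing the new triangle to be glued along one \emph{or two} of its edges. In the two-edge case the skeleta $X$ (for $\mathcal{C}_{\Sigma'}$) and $Y$ (for $\mathcal{C}_\sigma$) are identified along two families of circles indexed by $\mathrm{Pun}(\nu_1)\sqcup\mathrm{Pun}(\nu_2)$; Proposition \ref{gluealongcircle} still applies, since $Z_{1,2}$ is only required to be a disjoint union of circles, and on the B-side the corresponding pullback is taken over $B(\nu_1)\times B(\nu_2)$. Once this case is added, the remainder of your argument (matching the two cospans via the induction hypothesis, and verifying the displayed square by factoring $R_p$ through the exceptional restriction to $X$) is the paper's proof; and the labeling/equivariance bookkeeping you flag at the end is indeed the substantive point, pinned down by the $G^\vee_\tau$-labelings of the components of $\widetilde{\mathcal C_\tau}$ in Section \ref{sec:general-mirror-curve} together with the choice of dumbbell lift in the affine case.
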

\begin{proof} 
	
	We do induction on the cardinality of $\Sigma(3)$. As explained in \cite[Section 7.2]{Pascaleff_2019}, each triangulated polygon can be glued step by step from triangles. For each gluing step, one only needs to glue in a new triangle along one or two of its edges. When $\#\Sigma(3)=1$, we are back in the affine situation. The equivalence of restriction functors is verified in Proposition \ref{intersection}. 
	
	Assume the case for $\#\Sigma(3)=n$ is true. Given a fan $\Sigma$ with $\#\Sigma(3)=n+1$, we can decompose it as $\Sigma(3)=\Sigma'\cup \sigma$ where $\sigma$ is $3$-dimensional cone and $\Sigma'(3)=n$. For the corresponding polygon, this decomposition describes $P_\Sigma$ as obtained from $P_{\Sigma'}$ by gluing in a new triangle $P_\sigma$ along one or two of its edges. On the mirror curve side, this corresponds to attaching a mirror curve $\mathcal{C}_\sigma$ to $\mathcal{C}_{\Sigma'}$ along the punctures labeled by the gluing edges of the triangle. For simplicity, let's assume that we are gluing along one edge $\nu$ of the triangle $\sigma$. The case of gluing two edges is done similarly. 
	
	By \cite[Lemma 7.5]{Pascaleff_2019}, we can pick a skeleton $X$ for $\mathcal{C}_{\Sigma'}$ that has disjoint circles at gluing punctures. Similarly, we pick a skeleton $Y$ for $\mathcal{C}_\sigma$ that has disjoint circles at gluing punctures. For a gluing puncture, the skeleton $X$ and $Y$ each have a unique gluing circle. We can identify these $S^1$ circles and obtain the glued skeleton for $\mathcal{C}_\Sigma$. This process is described by the   pushout of ribbon graphs
	\begin{center}
		\begin{tikzcd}
			\coprod_{\text{Pun}(\nu)} S^1\arrow{r}\arrow{d}&X\arrow{d}\\
			Y\arrow{r}&X \coprod_{\coprod_{\text{Pun}(\nu)} S^1} Y.
		\end{tikzcd}
	\end{center}
	The resulting $X \coprod_{\coprod_{\text{Pun}(\nu)} S^1} Y$ is a skeleton for $\mathcal{C}_\Sigma$. As discussed in Proposition \ref{sec:general-mirror-curve}, the cyclic group $G^\vee_\nu\cong \mu_n$ acts on the index set for the circles, where $n+1=\#\text{Pun}(\nu)+1$ is the number of the lattice points on the gluing edge $\nu$. From our explicit description of the action, one sees that the gluing description of the mirror curve is compatible with the action from both sides. This is important for assembling the restriction functors $R_p$ to $\bigoplus R_p$ and compare with the mirror side.
	
	The decomposition of $X \coprod_{\coprod_{\text{Pun}(\nu)} S^1} Y$ by two closed subgraphs $\{X,Y\}$ satisfies the assumption of Proposition \ref{gluealongcircle}. We conclude that the topological Fukaya category is the following homotopy pullback
	\begin{center}
		\begin{tikzcd}
			\mathcal{F}_\infty^{top}(X \coprod_{\coprod_{\text{Pun}(\nu)} S^1} Y)\arrow{r}{S_\infty}\arrow{d}{S_\infty}&\mathcal{F}_\infty^{top}(X)\arrow{d}{S_\infty}\\
			\mathcal{F}_\infty^{top}(Y)\arrow{r}{S_\infty}&\mathcal{F}_\infty^{top}(\coprod_{\text{Pun}(\nu)} S^1).
		\end{tikzcd}
	\end{center}
	By the induction assumption, we have the following commutative diagram:
	\begin{center}
		\begin{tikzcd}
			\mathcal{F}_\infty^{top}(Y)\arrow{d}{\Psi,\cong}\arrow{r}{S_\infty}&\mathcal{F}_\infty^{top}(\coprod_{\text{Pun}(\nu)} S^1)\arrow{d}{\Psi,\cong}&\mathcal{F}_\infty^{top}(X)\arrow{l}{S_\infty}\arrow{d}{\Psi,\cong}\\
			B(R_\Sigma)\arrow{r}&B(\nu)&B(R_\sigma)\arrow{l}.
		\end{tikzcd}
	\end{center}
	Taking homotopy pullback of the lower row gives a diagram
	\begin{center}
		\begin{tikzcd}
			B(R_\Sigma)\arrow{r}\arrow{d}&B(R_\sigma)\arrow{d}\\
			B(R_{\Sigma'})\arrow{r}&B(\nu).
		\end{tikzcd}
	\end{center}
		Comparing these two squares, we get the desired equivalence$$\Psi:\mathcal{F}^{top}_\infty(X \coprod_{\coprod_{\text{Pun}(\nu)} S^1} Y)\cong B(R_\Sigma).$$

	The more difficult part is to analyze the functors of the restriction to punctures. We claim that these restriction functors factor through the exceptional restrictions to $X$ or $Y$, i.e., for each single puncture $p$ coming from $\mathcal{C}_{\Sigma'}$ the following diagram commutes, where the map on the right (abusively named $R_p$) is the restriction functor `defined inside $X$':
	\begin{center}
		\begin{tikzcd}
			\mathcal{F}_\infty^{top}(X \coprod_{\coprod_{\text{Pun}(\nu)} S^1} Y)\arrow{r}{S_\infty}\arrow{rd}{R_p} & \mathcal{F}_\infty^{top}(X)\arrow{d}{R_p}\\
			&\mathcal{F}_\infty^{top}(S^1).
		\end{tikzcd}
	\end{center}
	Collecting all restriction functors associated to the punctures on an edge $\nu$ from $\Sigma'$ gives a commutative diagram by the induction hypothesis
	\begin{center}
		\begin{tikzcd}
			\mathcal{F}_\infty^{top}(X \coprod_{\coprod_{\text{Pun}(\nu)} S^1} Y)\arrow{r}{S_\infty}\arrow{d}{\Psi,\cong}&\mathcal{F}_\infty^{top}(X)\arrow{r}{\bigoplus_{\text{Pun}(\nu)} R_p}\arrow{d}{\Psi,\cong}&\mathcal{F}_\infty^{top}(\coprod_{\text{Pun}(\nu)} S^1)\arrow{d}{\Psi,\cong}\\
			B(R_{\Sigma})\arrow{r}&B(R_{\Sigma'})\arrow{r}&B(\nu),
		\end{tikzcd}
	\end{center}
	and this verifies the required commutativity. Now it remains to check the claim. This follows exactly as in \cite[Theorem 8.3]{Pascaleff_2019}. First let's recall the defintion of the restriction functor $R_p$: we need to modify the skeleton of $\mathcal{C}_\Sigma$ to obtain a circle at $p$ then apply exceptional restrction functor. Suppose we are looking at a puncture $p$ from $\Sigma'$. We only need to modify the skeleton $X \coprod_{\coprod_{\text{Pun}(\nu)} S^1} Y$ inside a neighbourhood of  $X$  to produce a puncture at $p$. As the exceptional restriction is compatible with open restriction in Lemma \ref{close_compatible_with_open}, it suffices to understand the restriction functor from an open neighbourhood $U$ of $X$ to $p$. Note that the exceptional restriction $S_\infty$ from $U$ to $X$ is in fact modding out the image of corestriction functors from some external edges, and it doesn't matter whether we do this before or after modifying $U$. So we conclude that the restriction functor $R_p:\mathcal{F}^{top}_\infty(X \coprod_{\coprod_{\text{Pun}(\nu)} S^1} Y)\rightarrow\mathcal{F}^{top}_\infty(S^1)$ actually factors through exceptional restriction to $X$ and the above argument applies.
 %As the restriction functor can be decomposed as direct sum of restriction to each single %punctures, we may check

\end{proof}
\begin{cor} \label{mainresult}We have the mirror symmetry equivalence $\mathcal{F}^{top}_\infty(\mathcal{C}_\Sigma)\cong \text{MF}^\infty(\mathcal{X}_\Sigma)$.
\end{cor}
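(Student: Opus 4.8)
\emph{The plan.} The statement is a formality once the two key equivalences of this section are in hand, so I would deduce it in one step and then indicate where the actual content lies. First I would invoke the equivalence $\Psi\colon\mathcal{F}^{top}_\infty(\mathcal{C}_\Sigma)\cong B(R_\Sigma)$ produced by the preceding proposition, where $B$ is the constructible sheaf of $\mathbb{Z}/2$-dg categories on the dual graph $R_\Sigma$ with $B(\sigma)=\text{MF}^\infty([\mathbb{A}^3/G_\sigma],z_1z_2z_3)$ on each vertex $\sigma\in\Sigma(3)$, with $B(\nu)\cong\text{QCoh}_{\mathbb{Z}/2}([\mathbb{G}_m/G_\nu])$ on each edge $\nu\in\Sigma(2)$ (the Zariski open restriction of $\text{MF}^\infty(-,W)$ to the distinguished chart, Proposition \ref{intersection}), and with global sections $B(R_\Sigma)$ equal to the homotopy equalizer of $\prod_{\sigma\in\Sigma(3)}B(\sigma)\rightrightarrows\prod_{\nu\in\Sigma(2)}B(\nu)$.

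\emph{The B-side input.} Next I would invoke $B(R_\Sigma)\cong\text{MF}^\infty(\mathcal{X}_\Sigma,W)$: since $\{\mathcal{X}_\sigma\}_{\sigma\in\Sigma(3)}$ is an \'etale (indeed Zariski) cover of $\mathcal{X}_\Sigma$ and $W^{-1}(0)$ is smooth — so that $\text{MF}^\infty(-,W)$ is the zero category — on every triple and higher overlap, Preygel's \'etale descent for $\text{MF}^\infty(-,W)$ collapses the \v{C}ech homotopy limit to exactly this equalizer. Composing the two equivalences, and recalling that $W=W_\Sigma$ is the height function trivializing $K_{\mathcal{X}_\Sigma}$ so that $\text{MF}^\infty(\mathcal{X}_\Sigma)$ abbreviates $\text{MF}^\infty(\mathcal{X}_\Sigma,W_\Sigma)$, yields the claimed equivalence of $\mathbb{Z}/2$-dg categories. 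For the corollary per se there is nothing more to do.

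\emph{Where the work is, and the main obstacle.} If the two inputs were not already available I would produce them as follows, and this is where essentially all the difficulty resides. The identification $\mathcal{F}^{top}_\infty(\mathcal{C}_\Sigma)\cong B(R_\Sigma)$ is an induction on $\#\Sigma(3)$. The base case $\#\Sigma(3)=1$ is the affine theorem: equivariant dimensional reduction $\coh_{\mathbb{Z}/2}(\mathcal{Y})\cong D_{Sing}(\mathcal{X}_0)$ (Proposition \ref{affinecomputation}), matching of both sides with a common homotopy pushout via the dumbbell skeleton, and Proposition \ref{intersection} for the restriction to punctures. The inductive step glues in one new triangle $P_\sigma$ along one or two of its edges; on the mirror curve this is realized as a pushout of ribbon graphs along circles at the gluing punctures, and one matches the resulting homotopy pullback of topological Fukaya categories with the homotopy pullback of the $B(-)$ diagram using Pascaleff--Sibilla's gluing-along-a-circle square (Proposition \ref{gluealongcircle}) and the compatibility of exceptional with open restriction (Lemma \ref{close_compatible_with_open}). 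The hard part — the only genuinely delicate point in the whole chain — is comparing the puncture-restriction functors $\bigoplus_{p\in\text{Pun}(\nu)}R_p$ on the A-side with the Zariski open restrictions $B(\sigma)\to B(\nu)$ on the B-side, coherently across every gluing step: this forces one to track the $G^\vee_\nu$-equivariant labelings of the gluing circles on both glued components so that they match, and to verify that each $R_p$ factors through the exceptional restriction to the relevant half of the glued skeleton.
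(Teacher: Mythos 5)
Your proposal is correct and is exactly the paper's (implicit) argument: the corollary follows by composing the equivalence $\Psi\colon\mathcal{F}^{top}_\infty(\mathcal{C}_\Sigma)\cong B(R_\Sigma)$ from the preceding proposition with the descent identification $B(R_\Sigma)\cong\text{MF}^\infty(\mathcal{X}_\Sigma,W_\Sigma)$. Your accounting of where the real work lies (the induction on $\#\Sigma(3)$, the affine base case, and the matching of puncture restrictions with Zariski restrictions) also accurately reflects the paper's structure.
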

%\bibliographystyle{plain}
%\bibliography{TFCYO}
\printbibliography
\end{document}